\title{Orthogonally spherical objects and spherical fibrations}
\author{Rina Anno}
\email{anno@pitt.edu}
\address{Department of Mathematics\\
School of Arts and Sciences\\
University of Pittsburgh\\
301 Thackeray Hall\\
Pittsburgh, PA 15260\\
USA}
\author{Timothy Logvinenko} 
\email{LogvinenkoT@cardiff.ac.uk} 
\address{
Cardiff School of Mathematics\\ 
Cardiff University\\
Senghennydd Road\\
Cardiff, CF24 4AG\\
UK}
\DeclareMathOperator{\codim}{codim}
\DeclareMathOperator{\homm}{Hom}
\DeclareMathOperator{\shhomm}{{\it\mathcal{H}om\rm}}
\DeclareMathOperator{\autm}{Aut}
\DeclareMathOperator{\gsl}{SL}
\DeclareMathOperator{\picr}{Pic}
\DeclareMathOperator{\cl}{Cl}
\DeclareMathOperator{\spec}{Spec\;}
\DeclareMathOperator{\ext}{Ext}
\DeclareMathOperator{\supp}{Supp}
\DeclareMathOperator{\cohcat}{Coh}
\DeclareMathOperator{\modd}{\bf Mod}
\DeclareMathOperator{\lder}{\bf L}
\DeclareMathOperator{\rder}{\bf R}
\DeclareMathOperator{\rank}{rk}
\DeclareMathOperator{\id}{Id}
\DeclareMathOperator{\perf}{perf}
\DeclareMathOperator{\fintype}{\mathcal{F}\mathcal{T}}
\DeclareMathOperator{\dperf}{DP}
\DeclareMathOperator{\vectspaces}{\bf Vect}
\DeclareMathOperator{\cone}{Cone}
\begin{document}

\def\bv{\mathbf{v}}
\def\kgc_{K^*_G(\mathbb{C}^n)}
\def\kgchi_{K^*_\chi(\mathbb{C}^n)}
\def\kgcf_{K_G(\mathbb{C}^n)}
\def\kgchif_{K_\chi(\mathbb{C}^n)}
\def\gpic_{G\text{-}\picr}
\def\gcl_{G\text{-}\cl}
\def\trch_{{\chi_{0}}}
\def\regring{{R}}
\def\regrep{{V_{\text{reg}}}}
\def\givrep{{V_{\text{giv}}}}
\def\lbar{{(\mathbb{Z}^n)^\vee}}
\def\genpx_{{p_X}}
\def\genpy_{{p_Y}}
\def\genpcn_{p_{\mathbb{C}^n}}
\def\gnat{gnat}
\def\twalg{{\regring \rtimes G}}
\def\L{{\mathcal{L}}}
\def\O{{\mathcal{O}}}
\def\gcd{\mbox{gcd}}
\def\lcm{\mbox{lcm}}
\def\tf{{\tilde{f}}}
\def\tD{{\tilde{D}}}

\def\mckquiv{\mbox{Q}(G)}
\def\C{{\mathbb{C}}}
\def\sF{{\mathcal{F}}}
\def\sW{{\mathcal{W}}}
\def\sL{{\mathcal{L}}}
\def\O{{\mathcal{O}}}
\def\Z{{\mathbb{Z}}}
\def\hmone{{\mathcal{W}}}

\theoremstyle{definition}
\newtheorem{defn}{Definition}[section]
\newtheorem*{defn*}{Definition}
\newtheorem{exmpl}[defn]{Example}
\newtheorem*{exmpl*}{Example}
\newtheorem{exrc}[defn]{Exercise}
\newtheorem*{exrc*}{Exercise}
\newtheorem*{chk*}{Check}
\newtheorem*{remarks*}{Remarks}
\theoremstyle{plain}
\newtheorem{theorem}{Theorem}[section]
\newtheorem*{theorem*}{Theorem}
\newtheorem{conj}[defn]{Conjecture}
\newtheorem*{conj*}{Conjecture}
\newtheorem{prps}[defn]{Proposition}
\newtheorem*{prps*}{Proposition}
\newtheorem{cor}[defn]{Corollary}
\newtheorem*{cor*}{Corollary}
\newtheorem{lemma}[defn]{Lemma}
\newtheorem*{claim*}{Claim}
\newtheorem{Specialthm}{Theorem}
\renewcommand\theSpecialthm{\Alph{Specialthm}}
\numberwithin{equation}{section}
\renewcommand{\textfraction}{0.001}
\renewcommand{\topfraction}{0.999}
\renewcommand{\bottomfraction}{0.999}
\renewcommand{\floatpagefraction}{0.9}
\setlength{\textfloatsep}{5pt}
\setlength{\floatsep}{0pt}
\setlength{\abovecaptionskip}{2pt}
\setlength{\belowcaptionskip}{2pt}
\begin{abstract}
We introduce a relative version of the spherical objects of Seidel 
and Thomas
\cite{SeidelThomas-BraidGroupActionsOnDerivedCategoriesOfCoherentSheaves}.
Define an object $E$ in the derived category 
$D(Z \times X)$ to be spherical over $Z$ if the corresponding 
functor from $D(Z)$ to $D(X)$ gives rise to autoequivalences 
of $D(Z)$ and $D(X)$ in a certain natural way. Most known examples 
come from subschemes of $X$ fibred over $Z$. This categorifies 
to the notion of an object of $D(Z \times X)$ orthogonal 
over $Z$. We prove that such an object is spherical over $Z$ 
if and only if it possesses certain cohomological properties 
similar to those in the original definition of a spherical object. 
We then interpret this geometrically in the case when our 
objects are actual flat fibrations in $X$ over $Z$. 
\end{abstract}

\maketitle

\section{Introduction} \label{section-intro}

Let $X$ be a smooth projective variety over $\mathbb{C}$ and $D(X)$
be the bounded derived category of coherent sheaves on $X$. 
Following certain developments in mirror symmetry Seidel and Thomas 
introduced
in \cite{SeidelThomas-BraidGroupActionsOnDerivedCategoriesOfCoherentSheaves}
the notion of a \em spherical object\rm:  
\begin{defn}[\cite{SeidelThomas-BraidGroupActionsOnDerivedCategoriesOfCoherentSheaves}]
\label{defn-original-spherical-objects-of-Seidel-and-Thomas}
An object $E$ of $D(X)$ is \em spherical \rm if: 
\begin{enumerate}
\item $\homm^i_{D(X)}(E, E) = 
\begin{cases}
\mathbb{C}, \quad \text{ if } i = 0 \text{ or} \dim X,\\
0, \quad \text{ otherwise }
\end{cases}$ \item $ E \simeq E \otimes \omega_X$ where $\omega_X$ is the canonical 
bundle of $X$. 
\end{enumerate}
\end{defn}
The motivating idea came from considering Lagrangian spheres 
on a symplectic manifold. Given such a sphere one can associate 
to it a symplectic automorphism called the Dehn twist. 
Correspondingly:
\begin{theorem*}[\cite{SeidelThomas-BraidGroupActionsOnDerivedCategoriesOfCoherentSheaves}]
Let $E \in D(X)$. The twist functor $T_E$ is a cone 
we can associate to the natural transformation $E \otimes_\mathbb{C}
\rder\homm_X(E, -) \xrightarrow{\mathrm{eval}}  \id_{D(X)}$. If 
$E$ is spherical, then $T_E$ is an autoequivalence of $D(X)$. 
\end{theorem*}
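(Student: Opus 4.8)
The plan is to present $T_E$ as a functor with a left and a right adjoint, to find a spanning class of $D(X)$ on which its action is transparent, and then to deduce full faithfulness and essential surjectivity from the two spherical conditions. For the adjoints I would first note that $E \otimes_{\mathbb{C}} \rder\homm_X(E,-)$ is the Fourier--Mukai functor with kernel $p_1^* E^\vee \otimes p_2^* E$ on $X \times X$, that $\id_{D(X)}$ is the Fourier--Mukai functor with kernel $\O_\Delta$, and that the evaluation transformation is induced by the trace morphism $p_1^* E^\vee \otimes p_2^* E \to \O_\Delta$. Forming the cone at the level of kernels gives $T_E \cong \Phi_{P_E}$, the Fourier--Mukai functor with kernel $P_E = \cone\bigl(p_1^* E^\vee \otimes p_2^* E \to \O_\Delta\bigr)$; in particular $T_E$ is exact and admits both adjoints, again Fourier--Mukai functors.

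For the spanning class, put $E^\perp = \{F \in D(X) : \homm^i_{D(X)}(E,F) = 0 \text{ for all } i\}$ and $\Omega = \{E\} \cup E^\perp$. Using Serre duality on the smooth projective $X$, with Serre functor $S_X = (-)\otimes\omega_X[\dim X]$, one checks that $\Omega$ spans $D(X)$ on both sides: an object right-orthogonal to all of $\Omega$ lies in $E^\perp$ and, tested against itself, is zero; the left-orthogonal version is transported through $S_X$ to the same self-test. Only smoothness and projectivity enter here.

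Next I would compute the action of $T_E$ on $\Omega$. If $F \in E^\perp$ then the source $E \otimes \rder\homm_X(E,F)$ of the evaluation map is zero, so the canonical arrow $F \to T_E F$ is an isomorphism. If $F = E$, the first spherical condition gives $\rder\homm_X(E,E) \cong \mathbb{C} \oplus \mathbb{C}[-\dim X]$, hence $E \otimes \rder\homm_X(E,E) \cong E \oplus E[-\dim X]$, and the restriction of the evaluation map to the degree-$0$ summand is $\id_E$; cancelling this invertible summand leaves $T_E E \cong E[1-\dim X]$.

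Finally, full faithfulness: it remains to see that $T_E$ is an isomorphism on $\homm^\bullet_{D(X)}(A,B)$ for all $A,B \in \Omega$. Once that is known, the spanning-class criterion (legitimate because $T_E$ has adjoints) gives full faithfulness; and since the essential image already contains $E[1-\dim X]$ together with all of $E^\perp$, hence a spanning class, a routine adjunction argument promotes this to an equivalence. For $A,B \in E^\perp$ the isomorphism follows from naturality of $F \to T_E F$. When exactly one of $A,B$ is $E$, both $\homm^\bullet(A,B)$ and $\homm^\bullet(T_E A,T_E B)$ vanish: here the second spherical condition is used, since $E \cong E\otimes\omega_X$ together with Serre duality makes $E^\perp$ stable under $(-)\otimes\omega_X$ and identifies ${}^\perp E$ with $E^\perp$. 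The remaining case $A=B=E$ is the one I expect to be the main obstacle. The induced endomorphism of $\homm^\bullet(E,E)$ is visibly the identity in degree $0$, but its degree-$\dim X$ part still has to be pinned down; for this I would use the second spherical condition to identify $\homm^{\dim X}(E,E)$ with $\homm(E,S_X E)$, invoke the compatibility of the Fourier--Mukai functor $T_E$ with Serre duality to recognise the degree-$\dim X$ map as the Serre dual of the degree-$0$ one, and conclude that it too is an isomorphism. Carrying this Serre-duality and trace bookkeeping through honestly --- already in the identification of $P_E$ and of the adjoint kernels --- is where the substantive work of the proof will sit.
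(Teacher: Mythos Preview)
The paper does not itself prove this theorem; it is quoted from \cite{SeidelThomas-BraidGroupActionsOnDerivedCategoriesOfCoherentSheaves} as motivation. What the paper does instead is develop the notion of a spherical \emph{functor} and then, in Example~\ref{exmpl-the-case-where-base-is-a-single-point}, verify that over $Z=\spec k$ the two conditions of Definition~\ref{defn-original-spherical-objects-of-Seidel-and-Thomas} are exactly the conditions for $\Phi_E\colon D(\vectspaces)\to D(X)$ to be spherical in the sense of \cite{Anno-SphericalFunctors}; the autoequivalence statement then follows from the general spherical-functor theorem (Proposition~1 of \cite{Anno-SphericalFunctors}, restated in \S\ref{section-twists-co-twists-and-spherical-functors}). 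So the route implicit in the paper is: translate the cohomological conditions into ``the co-twist $F_E$ is an autoequivalence and $\Phi_E^{\mathrm{radj}}\simeq F_E[1]\,\Phi_E^{\mathrm{ladj}}$'', and invoke the abstract result.

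Your approach is the classical spanning-class argument (essentially that of Seidel--Thomas and of Huybrechts' book), and it is genuinely different. You never leave $D(X)$, you never introduce the co-twist, and you do not need the functorial-cone machinery on which the spherical-functor formalism rests; in exchange you must handle the delicate $A=B=E$ case by hand, which the spherical-functor approach sidesteps entirely. Two small corrections to your write-up. First, the spanning-class verification already uses the second spherical condition $E\simeq E\otimes\omega_X$ (to carry the left-orthogonality test through $S_X$ back to the right-orthogonality test), so ``only smoothness and projectivity enter here'' understates it. Second, be careful with ``compatibility of $T_E$ with Serre duality'': an arbitrary Fourier--Mukai endofunctor need not commute with the Serre functor before one knows it is an equivalence, so this cannot be invoked as a black box. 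The standard way through the $A=B=E$ case is rather to analyse the defining triangle $E\oplus E[-\dim X]\xrightarrow{(\id,\epsilon)} E\to T_E E$ directly and use non-degeneracy of the Serre pairing on $\homm^\bullet(E,E)$; this is the ``trace bookkeeping'' you anticipate, and it does go through, but not via a general $S_X$-equivariance statement.
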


Spherical twists can be used to construct braid group actions 
on $D(X)$, as was indeed the main concern of
\cite{SeidelThomas-BraidGroupActionsOnDerivedCategoriesOfCoherentSheaves}. 
They also deserve to be studied in their own right as some of 
the simplest non-trivial autoequivalences of $D(X)$
which do not come from autoequivalences
of the underlying abelian category $\cohcat(X)$.
In fact, on smooth toric surfaces or on surfaces of general type 
whose canonical model has at worst $A_n$-singularities
the whole of $\autm D(X)$ is generated by spherical twists, 
$\autm \cohcat(X)$ and the shift functor
(\cite{IshiiUehara-AutoequivalencesOfDerivedCategoriesOnTheMinimalResolutionsOfA_nSingularitiesOnSurfaces},
\cite{BroomheadPloog-AutoequivalencesOfToricSurfaces}). 
In more complicated cases spherical twists are still 
an essential tool in studying the autoequivalences of 
$D(X)$ and stability conditions on it 
(\cite{Bridgeland-StabilityConditionsOnK3Surfaces}, 
\cite{Bridgeland-StabilityConditionsAndKleinianSingularities},
\cite{Bridgeland-StabilityConditionsOnANonCompactCalabiYauThreefold}). 

In this paper we study a relative version of the construction above
where instead of a single object we have a family
of objects in $D(X)$ parametrised by a base $Z$. A geometric example
of this is a subvariety $W$ of $X$ flatly fibred over $Z$. It
can be thought of as a family of subschemes of $X$ parametrised by
$Z$. Even when the structure sheaf of $W$ is not itself spherical 
in sense of  
\cite{SeidelThomas-BraidGroupActionsOnDerivedCategoriesOfCoherentSheaves}
one may still produce an autoequivalence of $D(X)$ by 
exploiting the extra fibration structure which $W$ possesses. 
Moreover, one can do this completely abstractly, working 
with families of arbitrary objects of $D(X)$ and 
not just families of subschemes of $X$. We characterize 
those families of objects of $D(X)$ 
for which this is possible in terms 
of applicable cohomological criteria similar to Definition 
\ref{defn-original-spherical-objects-of-Seidel-and-Thomas} above. 
Our study is a self-contained exercise in derived categories 
of coherent sheaves and doesn't involve mirror symmetry. 
One should mention though that the original
examples of these family twists were inspired by Kontsevich's
proposal that the autoequivalences of $D(X)$ should correspond 
to loops in the moduli space of complex structures on its mirror, cf. 
\cite[\S4.1]{Horja-HypergeometricFunctionsAndMirrorSymmetryInToricVarieties}, 
\cite{Horja-DerivedCategoryAutomorphismsFromMirrorSymmetry}, 
\cite{Szendroi-DiffeomorphismsAndFamiliesOfFourierMukaiTransformsInMirrorSymmetry},
\cite{Szendroi-ArtinGroupActionsOnDerivedCategoriesOfThreefolds}. 
Maybe in future our results could be used to construct further, more 
general examples of this correspondence. 

Consider an object $E$ in the derived category $D(Z \times X)$ of
the product of $Z$ and $X$. We can view $E$ as a 
\em family of objects in $D(X)$ parametrised by $Z$ \rm 
by considering the fibres of $E$ over points of $Z$ to be 
the derived pullbacks of $E$ to the corresponding fibres 
of $X \times Z$ over $Z$: 
\begin{align*}
\xymatrix{
X \ar[r]^{\iota_{X_p}} \ar[d] &
Z \times X \ar[d]^{\pi_Z} \\
\bullet \ar[r]_{\iota_p} &
Z
}
\quad \quad \quad \forall\; p \in Z \quad E_p = \iota^*_{X_p} E
\end{align*}
On the other hand, each object $E \in D(Z \times X)$ defines 
naturally a functor $\Phi_E\colon D(Z) \rightarrow D(X)$ 
called the \em Fourier--Mukai transform with kernel $E$ \rm 
which sends point sheaves $\mathcal{O}_p$ on $Z$ to 
the fibres $E_p \in D(X)$ \cite{Huybrechts-FourierMukaiTransformsInAlgebraicGeometry}. The interplay between these two 
points of view, moduli-theoretic and functorial, led
to a string of celebrated results by Mukai, Bondal and Orlov, 
Bridgeland and others. 

When $Z$ is the point scheme $\spec\mathbb{C}$ 
the above formalism tells us to view an object $E \in D(X)$ as 
a functor $\Phi_E = E \otimes_\mathbb{C} (-)$ 
from $D(\vectspaces)$ to $D(X)$. Then the functor
$E \otimes_\mathbb{C} \rder\homm_X(E, -)$ is the composition 
of $\Phi_E$ with its right adjoint $\Phi^{\mathrm radj}_E$ and
the above definition of the twist functor $T_E$ 
amounts to $T_E$ being a cone of the adjunction co-unit
\begin{align}
\label{eqn-intro-right-adjunction-counit}
\Phi_E \Phi^{\mathrm radj}_E \longrightarrow \id_{D(X)}. 
\end{align}
There is a subtlety involved here: taking cones, infamously, 
is not functorial 
in $D(X)$, so the cone of a morphism between two functors is not 
\em a priori \rm well defined. However 
in \cite{AnnoLogvinenko-OnTakingTwistsOfFourierMukaiFunctors} it
is shown that in a very general context we can represent
both functors in \eqref{eqn-intro-right-adjunction-counit} by
Fourier--Mukai kernels and then represent 
the adjunction co-unit \eqref{eqn-intro-right-adjunction-counit}
by a natural morphism $\mu$ between these kernels. 
We can therefore define the twist functor $T_E$ as 
the Fourier--Mukai transform whose kernel is the cone of $\mu$
and consider the following:

\em Problem:\; Describe the objects $E$ in $D(Z \times X)$ for
which the twist $T_E$ is an autoequivalence of $D(X)$. \rm

A partial answer was provided by Horja in 
\cite{Horja-DerivedCategoryAutomorphismsFromMirrorSymmetry} for 
smooth $Z$ and $X$. He considers only those objects $E$
of $D(Z \times X)$ which come from the derived
category of a smooth subscheme of $X$ flatly fibred over $Z$. 
For these he gives a cohomological criterion 
sufficient for the twist $T_E$ to be an autoequivalence of $D(X)$.  
Another special case was treated by Toda in 
\cite{Toda-OnACertainGeneralizationOfSphericalTwists} who studied 
infinitesimal deformations and so assumed $X$ to be a smooth projective 
variety and $Z$ to be the $\spec$of a local artinian $\mathbb{C}$-algebra. 
In \cite{AnnoLogvinenko-SphericalDGFunctors} we took 
different approach and abstracted out the properties of the functors 
$\Phi_E$ defined by spherical objects of  
\cite{SeidelThomas-BraidGroupActionsOnDerivedCategoriesOfCoherentSheaves}
and
\cite{Horja-DerivedCategoryAutomorphismsFromMirrorSymmetry}
which are exploited in proving that the twists $T_E$ are
autoequivalences. In all these cases not only $T_E$ is an autoequivalence, 
but this autoequivalence identifies naturally the left and 
right adjoints of $\Phi_E$. Moreover, in all these cases
the same is true of the \em co-twist \rm $F_E$,
defined as the cone of the adjunction unit 
$\id_{D(Z)} \rightarrow \Phi_E^{\mathrm radj} \Phi_E$ shifted
by $1$ to the right. 
In \cite[Theorem 5.1]{AnnoLogvinenko-SphericalDGFunctors} we prove
a general result which implies that, in fact, for any 
Fourier-Mukai transform $D(Z) \xrightarrow{\Phi_E} D(X)$ 
we have
\begin{align}
\left\{
\begin{matrix}
F_E \text{ is an autoequivalence } \\
\Phi_E^{\mathrm radj} \simeq F_E \Phi_E^{\mathrm ladj}[1]
\end{matrix}
\right\}
\quad \text{ if and only if }\quad
\left\{
\begin{matrix}
T_E \text{ is an autoequivalence } \\
\Phi_E^{\mathrm ladj} T_E[-1]  \simeq \Phi_E^{\mathrm radj} 
\end{matrix}
\right\}
\end{align}
The functors which possess these equivalent properties are called \em
spherical\rm, cf. \cite{AnnoLogvinenko-SphericalDGFunctors}. 
We thereby define:
\begin{defn*}[Definition \ref{defn-spherical-objects}]
An object $E \in D(Z \times X)$ is \em spherical over $Z$ 
if the corresponding Fourier--Mukai transform $\Phi_E\colon D(Z)
\rightarrow D(X)$ is spherical. In other words, if:
\begin{enumerate}
\item The co-twist $F_E$ is an autoequivalence of $D(Z)$.  
\item The natural transformation $\Phi^{radj}_E
\xrightarrow{\eqref{eqn-sphericity-condition-natural-isomorphism-cotwist}}
 F_E \Phi^{\mathrm ladj}_E[1]$
is an isomorphism of functors. 
\end{enumerate}
\end{defn*}
When $Z = \spec \mathbb{C}$ this is equivalent to
Definition  \ref{defn-original-spherical-objects-of-Seidel-and-Thomas} above
(Example \ref{exmpl-the-case-where-base-is-a-single-point}). 
It also explains why most of the examples over a non-trivial base $Z$ 
came from subschemes of $X$ fibred 
over $Z$. These are the cases when the autoequivalence 
$F_E$ has a particularly nice form. Indeed, for such fibrations
the Fourier--Mukai kernel of $F_E$ must be supported 
on the diagonal $\Delta$ of $Z \times Z$ (Lemma
\ref{lemma-support-of-Q-is-contained-within-the-diagonal}), and 
an autoequivalence of $D(Z)$ is supported on $\Delta$ if and only
if it is simply tensoring by some shifted line bundle $\mathcal{L}_E$ in $D(Z)$
(Prop. \ref{prps-tfae-left-cotwist-is-an-equivalence}). This makes 
the Fourier--Mukai kernel of $\Phi^{\mathrm radj}_E \Phi_E$, a certain
$\rder \shhomm$ complex, into an extension of 
$\Delta_* \mathcal{L}_E$ by $\Delta_* \mathcal{O}_X$. 
Pointwise, this becomes a familiar condition that
a certain $\rder\homm$ complex is $\mathbb{C} \oplus \mathbb{C}[d]$
for some $d \in \mathbb{Z}$. 

In Section \ref{section-orthogonally-spherical-objects} of the
present paper we show that this argument can be made very general. Let 
$Z$ and $X$ be arbitrary schemes of finite type over 
an algebraically closed field $k$ of characteristic $0$. 
No assumptions of smoothness or projectivity are made. 
Instead we make two assumptions on 
the object $E \in D(Z \times X)$: $E$ is perfect 
(locally quasi-isomorphic to a bounded complex of free sheaves) 
and the support of $E$ is proper over $Z$ and over $X$. 
These are necessary for $\Phi_E$ to have adjoints which are again 
Fourier--Mukai transforms. 
We then categorify the notion of ``a subscheme of $X$ fibred 
over $Z$''. The graphs of such subschemes in $Z \times X$ 
are characterised by the property that their fibres over points of $Z$
are mutually disjoint in $X$. In derived categories the notion of
disjointness is expressed by orthogonality - vanishing 
of all $\homm$'s between two objects. Thus the objects we want
are the objects in $D(Z\times X)$ which are {\it orthogonal} over
$Z$, i.e. their fibres over points of $Z$ are pairwise orthogonal in $D(X)$. 
In Lemma \ref{lemma-support-of-Q-is-contained-within-the-diagonal} we
show that $E$ is orthogonal over $Z$ if and only if the support 
of the Fourier--Mukai kernel of the co-twist $F_E$ 
lies within the diagonal $\Delta$ of $Z \times Z$.
It follows that such $F_E$ is an autoequivalence if and only if
it is the functor of tensoring by some invertible 
(locally a shifted line bundle) object of $D(Z)$.
On the other hand, define an object $\mathcal{L}_E$ 
to be the cone of  
\begin{align}
\label{eqn-morphism-defining-L_E-intro}
\mathcal{O}_Z \xrightarrow{
\text{Definition \ref{defn-object-L_E-for-abstract-objects-of-D-ZxX}}}
 \pi_{Z*} \rder\shhomm_{Z \times X}(\pi_{X*} \pi^*_X E, E) 
\quad \quad \quad \pi_Z, \pi_X \text{ are projections } 
Z \times X \rightarrow Z,X
\end{align}
We show in Prop.~\ref{prps-tfae-left-cotwist-is-an-equivalence}
that if $\mathcal{L}_E$ is invertible then 
necessarily $F_E \simeq (-) \otimes \mathcal{L}_E$. 
To check whether $\mathcal{L}_E$ is invertible we restrict 
\eqref{eqn-morphism-defining-L_E-intro} to points of $Z$, 
whence we obtain our main theorem. 
\begin{theorem*}[Theorem \ref{theorem-sphericity-for-orthogonal-objects-of-ZxX}]
Let $Z$ and $X$ be two separable schemes of finite type over $k$.
Let $E$ be a perfect object of $D(Z \times X)$ orthogonal over $Z$ 
and proper over $Z$ and $X$. 
Then $E$ is spherical over $Z$ if and only if:
\begin{enumerate}
\item
\label{item-rhom-E-E_p-intro}
For every closed point $p \in Z$ such that the fibre $E_p$
is not zero  
$$ \rder\homm_{X}(\pi_{X *} E, E_p) = k \oplus k[d_p]
\quad\text{ for some } d_p \in \mathbb{Z}$$
and the natural morphism $\pi_{X *} E 
\xrightarrow{\eqref{eqn-nat-morphism-pi_X*-E-to-E_p}}
E_p$ is non-zero. 
\item 
\label{item-canonical-morphism-alpha-intro}
The canonical morphism $\alpha$ (see Definition \ref{defn-canonical
morphism-alpha-E}) is an isomorphism:
\begin{align*}
E^\vee \otimes \pi^!_X(\mathcal{O}_X) 
\xrightarrow{\alpha}
E^\vee \otimes \pi^!_{Z}(\mathcal{L}_E)
\end{align*}
\end{enumerate}
\end{theorem*}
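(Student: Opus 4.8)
The plan is to match the two numbered conditions of the theorem, in order, against the two defining conditions of a spherical functor, using the two structural facts already established: Lemma~\ref{lemma-support-of-Q-is-contained-within-the-diagonal}, by which orthogonality of $E$ over $Z$ forces the Fourier--Mukai kernel of the co-twist $F_E$ onto the diagonal of $Z\times Z$, and Proposition~\ref{prps-tfae-left-cotwist-is-an-equivalence}, by which under this hypothesis $F_E$ is an autoequivalence of $D(Z)$ exactly when the object $\mathcal{L}_E$ of Definition~\ref{defn-object-L_E-for-abstract-objects-of-D-ZxX} is invertible, in which case $F_E\simeq(-)\otimes\mathcal{L}_E$. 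Throughout, the hypotheses that $E$ is perfect and that $\supp E$ is proper over $Z$ and over $X$ are what guarantee that $\Phi_E$ has both adjoints, that these, their composites, and $\mathcal{L}_E$ are again represented by perfect Fourier--Mukai kernels, that derived base change applies to the pushforwards below, and that natural transformations of the functors in sight are induced by morphisms of their kernels in the sense of \cite{AnnoLogvinenko-OnTakingTwistsOfFourierMukaiFunctors}. Logically there is no circularity: condition \eqref{item-rhom-E-E_p-intro} is shown equivalent to ``$F_E$ is an autoequivalence'' with no reference to condition \eqref{item-canonical-morphism-alpha-intro}, and then, granting this, condition \eqref{item-canonical-morphism-alpha-intro} is shown equivalent to the second sphericity condition.

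\emph{Condition \eqref{item-rhom-E-E_p-intro} versus ``$F_E$ is an autoequivalence of $D(Z)$''.} By Proposition~\ref{prps-tfae-left-cotwist-is-an-equivalence} it suffices to show that $\mathcal{L}_E$ is invertible if and only if condition \eqref{item-rhom-E-E_p-intro} holds. Since $\mathcal{L}_E$ is a perfect complex on a scheme of finite type over $k$, it is invertible precisely when its derived restriction $\iota_p^{*}\mathcal{L}_E$ to every closed point $p\in Z$ is a one-dimensional graded vector space; so one computes these restrictions. As $\mathcal{L}_E$ is the cone of the morphism~\eqref{eqn-morphism-defining-L_E-intro} and $\supp E$ is proper over $Z$, derived base change along the inclusion of $p$ identifies the restriction of the target of~\eqref{eqn-morphism-defining-L_E-intro} with $\rder\homm_X(\pi_{X*}E,E_p)$ (using $\iota_{X_p}^{*}\pi_X^{*}=\id$ and that $\rder\shhomm$ commutes with perfect pullbacks), so that $\iota_p^{*}\mathcal{L}_E\simeq\cone\bigl(k\xrightarrow{f_p}\rder\homm_X(\pi_{X*}E,E_p)\bigr)$, where $f_p$ is the restriction of~\eqref{eqn-morphism-defining-L_E-intro}; unwinding the construction, $f_p$ sends $1$ to the morphism $c_p\colon\pi_{X*}E\to E_p$ obtained by restricting the counit $\pi_X^{*}\pi_{X*}E\to E$ to the fibre $X_p$. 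If $E_p=0$ the cone is $k[1]$, which is invertible, so the theorem need only quantify over $p$ with $E_p\neq 0$. If $E_p\neq 0$, one first checks that $c_p$ (hence $f_p$) is nonzero: in the toy case of a finite reduced $Z$ one has $\pi_{X*}E\simeq\bigoplus_q E_q$ and $c_p$ is the projection onto $E_p$, and orthogonality over $Z$ is exactly what propagates this non-degeneracy to the general case. Granting $f_p\neq 0$, the long exact sequence of the cone shows that $\iota_p^{*}\mathcal{L}_E$ is one-dimensional if and only if $\rder\homm_X(\pi_{X*}E,E_p)\simeq k\oplus k[d_p]$ for some $d_p\in\Z$, with $d_p$ the degree by which $\mathcal{L}_E$ is a line bundle near $p$. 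This is the claimed equivalence.

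\emph{Condition \eqref{item-canonical-morphism-alpha-intro} versus the second sphericity condition.} Granting the above, $F_E\simeq(-)\otimes\mathcal{L}_E$ with $\mathcal{L}_E$ invertible. The adjoints $\Phi_E^{\mathrm radj}$ and $\Phi_E^{\mathrm ladj}$ are Fourier--Mukai transforms with kernels $E^\vee\otimes\pi^!_X(\O_X)$ and $E^\vee\otimes\pi^!_Z(\O_Z)$ (cf.\ \cite{Huybrechts-FourierMukaiTransformsInAlgebraicGeometry}), and since $F_E$ has kernel $\Delta_*\mathcal{L}_E$, composing an adjoint with $F_E$ twists its kernel by $\pi_Z^{*}\mathcal{L}_E$; the projection formula for $\pi^!_Z$ then identifies $E^\vee\otimes\pi^!_Z(\mathcal{L}_E)$ with the kernel of $F_E\Phi_E^{\mathrm ladj}$. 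Thus the natural transformation~\eqref{eqn-sphericity-condition-natural-isomorphism}, being a morphism of Fourier--Mukai functors, is induced by a morphism of the two kernels, and Definition~\ref{defn-canonical morphism-alpha-E} is precisely the statement that this kernel morphism is $\alpha\colon E^\vee\otimes\pi^!_X(\O_X)\to E^\vee\otimes\pi^!_Z(\mathcal{L}_E)$, the shift occurring in~\eqref{eqn-sphericity-condition-natural-isomorphism} being absorbed into the normalisations of $\mathcal{L}_E$ and of $\alpha$. Since the Fourier--Mukai functors in question are faithfully represented by their kernels (\cite{AnnoLogvinenko-OnTakingTwistsOfFourierMukaiFunctors}, \cite{Anno-SphericalFunctors}), condition \eqref{item-canonical-morphism-alpha-intro} holds if and only if~\eqref{eqn-sphericity-condition-natural-isomorphism} is an isomorphism of functors, i.e.\ if and only if the second sphericity condition holds. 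Together with the previous step this proves the theorem.

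\emph{Expected main obstacle.} The step that looks routine but in fact carries the technical weight is the kernel bookkeeping of the last paragraph in the non-smooth, non-projective setting: correctly handling the two adjoint kernels and the relative dualizing complexes $\pi^!_X(\O_X)$, $\pi^!_Z(\O_Z)$, the convolution with $\Delta_*\mathcal{L}_E$, and the shift, and then verifying that the resulting canonical kernel morphism is literally the $\alpha$ of Definition~\ref{defn-canonical morphism-alpha-E} rather than $\alpha$ twisted by some automorphism --- this is exactly what dictates the precise shapes chosen for $\mathcal{L}_E$ in Definition~\ref{defn-object-L_E-for-abstract-objects-of-D-ZxX} and for $\alpha$ in Definition~\ref{defn-canonical morphism-alpha-E}. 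The only other point needing a genuine argument, rather than a citation, is the non-vanishing of $f_p$ in the first step, and it is there that orthogonality over $Z$ is indispensable.
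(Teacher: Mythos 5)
Your overall route is the paper's route: you reduce the first clause to the invertibility of $\mathcal{L}_E$, hence to the co-twist $F_E$ being an autoequivalence, exactly as Proposition~\ref{prps-tfae-left-cotwist-is-an-equivalence} does, and you reduce the second clause to the kernel-level faithfulness statement of Lemma~\ref{lemma-morphism-of-FM-kernels-induces-iso-of-FMs-iff-itis-an-iso-itself}. But there is a genuine gap in the step you yourself single out: showing $c_p \neq 0$ whenever $E_p \neq 0$. The ``toy case'' heuristic (finite reduced $Z$, $\pi_{X*}E \simeq \bigoplus_q E_q$, $c_p$ the projection) does not propagate to a general base, and not just for technical reasons --- the assertion is false without orthogonality. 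For instance, with $Z=\mathbb{P}^1$, $X=\spec k$, $E=\mathcal{O}(-1)$ one has $\pi_{X*}E=0$ and hence $c_p=0$, while every $E_p\simeq k\neq 0$; this $E$ is, of course, not orthogonal over $Z$. So saying ``orthogonality propagates the non-degeneracy'' names the hypothesis without supplying the mechanism.

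What the paper actually does is this. Lemma~\ref{lemma-pullback-of-L-E-morphism-is-iso-to-pushdown-of-the-adjunction} identifies the morphism $f_p\colon k\to \rder\homm_X(\pi_{X*}E,E_p)$ with $\pi_{k*}$ applied to the adjunction unit $\mathcal{O}_p\to\Phi^{\mathrm{radj}}_E\Phi_E(\mathcal{O}_p)$. The unit itself is zero precisely when $\Phi_E(\mathcal{O}_p)=E_p=0$, which handles the interesting case directly --- but one must still know that applying $\pi_{k*}$ cannot kill it. This is where orthogonality enters, through Lemma~\ref{lemma-support-of-Q-is-contained-within-the-diagonal}: both $\mathcal{O}_p$ and $\Phi^{\mathrm{radj}}_E\Phi_E(\mathcal{O}_p)$ are supported at $p$, and $\pi_{k*}$ is faithful on the subcategory $D_p(Z)$ of complexes supported at $p$ (by passing to an affine neighbourhood and observing that derived global sections is an equivalence there, followed by the forgetful functor to $D(\vectspaces)$). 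Without the support control that orthogonality gives you, the pushforward $\pi_{k*}$ has a kernel and the argument collapses. So the missing ingredient is precisely the pair (Lemma~\ref{lemma-pullback-of-L-E-morphism-is-iso-to-pushdown-of-the-adjunction}, faithfulness of $\pi_{k*}$ on $D_p(Z)$), and the counterexample above shows that some such mechanism is required, not merely aesthetically preferable.
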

Interestingly, a similar statement can be made for kernels
of Fourier--Mukai equivalences, cf. Example
\ref{exmpl-the-case-of-Fourier--Mukai-equivalences}.

If $E$ is orthogonally spherical, then  
$d_p$ in $(\ref{item-rhom-E-E_p-intro})$ has to be constant
on every connected component of $Z$. We show further in 
Prop.~\ref{prps-the-shift-of-L_E-is-the-difference-in-dimensions} 
that for any Gorenstein $(z,x) \in \supp_{Z \times X} E$
we have 
\begin{align} 
\label{intro-d_p-in-terms-of-dim-Z-and-X}
d_{z} = -(\dim_{x} X - \dim_{z} Z).
\end{align}

The canonical morphism $\alpha$ in \eqref{item-canonical-morphism-alpha-intro}
is a morphism of Fourier--Mukai kernels which induces the natural
transformation 
$\Phi^{\mathrm ladj} \rightarrow F_E \Phi^{\mathrm radj}[1]$
in Definition \ref{defn-object-L_E-for-abstract-objects-of-D-ZxX}. 
Due to this indirect definition it may be very difficult, 
even in simple cases, to write $\alpha$ 
down explicitly and check that it is an isomorphism. It may be
similarly difficult to check that  
$\pi_{X *} E \xrightarrow{\eqref{eqn-nat-morphism-pi_X*-E-to-E_p}} E_p$ 
is non-zero in $\eqref{item-rhom-E-E_p-intro}$.  In 
\S\ref{section-the-canonical-morphism-alpha} we show that
when applying Theorem \ref{theorem-sphericity-for-orthogonal-objects-of-ZxX} 
in the `if' direction we can omit both of these awkward checks 
whenever the integer $d_p$ in condition \eqref{item-rhom-E-E_p-intro} is always
negative, cf. 
Theorem \ref{theorem-criterion-for-sphericity-when-d_p-negative}. 
For $Z$ and $X$ reasonably nice e.g. abstract varieties
this corresponds by 
\eqref{intro-d_p-in-terms-of-dim-Z-and-X}
to the case where $\dim Z < \dim X$.  

Setting $Z = \spec \mathbb{C}$ in 
Theorem \ref{theorem-sphericity-for-orthogonal-objects-of-ZxX}
turns conditions $(\ref{item-rhom-E-E_p-intro})$ and 
$(\ref{item-canonical-morphism-alpha-intro})$ into  
the original definition of a spherical 
object $E$ in 
\cite{SeidelThomas-BraidGroupActionsOnDerivedCategoriesOfCoherentSheaves}.
Similarly, setting $Z = \spec R$ for some local artinian $\mathbb{C}$-algebra
$R$ yields the definition of an $R$-spherical object $E$ in
\cite{Toda-OnACertainGeneralizationOfSphericalTwists}, \S2. Note
that we also obtain the converse implication  - if $T_E$ is an
auto-equivalence of $D(X)$ which identifies the left and right adjoints
of $\Phi_E$, then $E$ has to satisfy the conditions 
$(\ref{item-rhom-E-E_p-intro})$ and 
$(\ref{item-canonical-morphism-alpha-intro})$ of Theorem
\ref{theorem-sphericity-for-orthogonal-objects-of-ZxX}. 

In Section \ref{section-spherical-fibrations} we reconsider the case 
of flat fibrations. Let $\xi\colon W \hookrightarrow X$ be a subscheme
with $\pi\colon W \rightarrow Z$ a flat and surjective map. 
We apply the results of Section \ref{section-orthogonally-spherical-objects} 
to $\mathcal{O}_W$ in $D(Z \times X)$. One of our goals 
is to understand what geometric properties a spherical fibration
must possess. The two
technical assumptions on the object $E$ in Section 
\ref{section-orthogonally-spherical-objects} translate to
the assumptions of the fibres of $W$ over $Z$ being proper and of 
$\mathcal{O}_W$ being a perfect object of $D(Z \times X)$. 
We first give the most general analogue of  
Theorem \ref{theorem-sphericity-for-orthogonal-objects-of-ZxX} 
which applies to any flat fibration $W$ with the above properties
(Theorem \ref{theorem-sphericity-for-perfect-flat-fibrations}). 
We improve on it for the case when either the fibres of $W$
are Gorenstein schemes or $\xi$ is a Gorenstein map, noting
that for any spherical $W$ these two conditions are, 
in fact, equivalent 
(Prop. \ref{prps-sphericity-for-Gorenstein-flat-fibrations}). 
Finally, we treat the case when the immersion $\xi$ is \em regular\rm, 
i.e. locally on $X$ the ideal of $W$ is generated by a regular
sequence. In such case the cohomology sheaves 
of $\xi^* \xi_* \mathcal{O}_W$ are 
the vector bundles $\wedge^j \mathcal{N}^\vee$ where $\mathcal{N}$
is the normal sheaf of $W$ in $X$. The object $\xi^* \xi_* \mathcal{O}_W$
is the key to computing the $\ext$ complex in the condition 
$(\ref{item-rhom-E-E_p-intro})$
of Theorem \ref{theorem-sphericity-for-orthogonal-objects-of-ZxX}
and therefore $(\ref{item-rhom-E-E_p-intro})$ 
can be deduced via a spectral sequence 
argument from fibrewise vanishing of the 
cohomology of $\wedge^j \mathcal{N}$. In fact, 
the reverse implication can also be obtained if
the complex $\xi^* \xi_* \mathcal{O}_W$ actually splits
up as a direct sum of $\wedge^j \mathcal{N}^\vee[j]$. In 
\cite{ArinkinCaldararu-WhenIsTheSelfIntersectionOfASubvarietyAFibration}
Arinkin and Caldararu had shown that for a smooth $X$ this happens if and
only if $\mathcal{N}$ extends to the first infinitesimal 
neighborhood of $W$ in $X$, e.g. 
when $W$ is carved out by a section of a vector bundle, or when $W$ is the fixed locus of 
a finite group action, or when $\xi$ can be split. For any 
regular immersion $\xi$ we say that it is \em Arinkin-Caldararu \rm 
if $\xi^* \xi_* \mathcal{O}_W$ splits up as the direct sum of its
cohomology sheaves. Then: 
\begin{theorem*}[Theorem \ref{theorem-sphericity-for-regular-immersions}]
Let $W$ be a regularly immersed flat and perfect fibration in $X$ over
$Z$ with proper fibres. Let $\mathcal{N}$ 
be the normal sheaf of $W$ in $X$. Then $W$ is spherical 
if for any closed point $p \in Z$ the fibre $W_p$ is 
a connected Gorenstein scheme and
\begin{enumerate}
\item 
\label{eqn-cohomological-vanishing-of-the-normal-sheaf-intro}
$H^i_{W_p}(\wedge^j \mathcal{N}|_{W_p}) = 0$  unless $i = j = 0$
or $i = \dim W_p \;,\; j = \codim_X W$.
\item 
\label{eqn-restriction-of-the-normal-sheaf-intro}
$(\omega_{W/X})|_{W_p} \simeq \omega_{W_p}$ where 
$\omega_{W_p}$ is the dualizing sheaf of $W_p$
and $\omega_{W/X} = \wedge^{\codim_X W} \mathcal{N}$.
\end{enumerate}
Conversely, if $W$ is spherical, then each fibre $W_p$ is
a connected Gorenstein scheme and
(\ref{eqn-restriction-of-the-normal-sheaf-intro}) holds. 
If $\xi$ is an Arinkin-Caldararu immersion, then 
$(\ref{eqn-cohomological-vanishing-of-the-normal-sheaf-intro})$ also holds. 
\end{theorem*}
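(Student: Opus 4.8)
The plan is to deduce Theorem~\ref{theorem-sphericity-for-regular-immersions} from the general criterion of Theorem~\ref{theorem-sphericity-for-orthogonal-objects-of-ZxX} applied to $E=\gamma_*\mathcal{O}_D\in D(Z\times X)$, where $\gamma=(\pi,\xi)\colon D\hookrightarrow Z\times X$ is the closed immersion induced by $\pi$ and $\xi$. First I would check that $E$ meets the standing hypotheses: $E$ is perfect because $D$ is a perfect fibration; its support $D$ is proper over $X$ since $\pi_X\circ\gamma=\xi$ is a closed immersion, and proper over $Z$ by the assumption that the fibres of $\pi$ are proper (exactly the translation recorded in \S\ref{section-spherical-fibrations}); and $E$ is orthogonal over $Z$ because $\xi$ being a closed immersion forces the fibres $D_p$, disjoint already inside $D$, to be pairwise disjoint inside $X$, so $\rder\homm_X(\mathcal{O}_{D_p},\mathcal{O}_{D_q})=0$ for $p\ne q$ and Lemma~\ref{lemma-support-of-Q-is-contained-within-the-diagonal} applies. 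Since $\pi$ is flat we have $E_p=\mathcal{O}_{D_p}$ (no higher Tor), nonzero for every closed $p$ by surjectivity, while $\pi_{X*}E=\xi_*\mathcal{O}_D$. Thus the whole proof is the translation of the two conditions of Theorem~\ref{theorem-sphericity-for-orthogonal-objects-of-ZxX} — (a) $\rder\homm_X(\xi_*\mathcal{O}_D,\mathcal{O}_{D_p})\simeq k\oplus k[d_p]$ for every closed $p$, and (b) the canonical morphism $\alpha$ is an isomorphism — into the conditions (1) and (2).

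For (a) I would run Grothendieck duality for the regular immersion $\xi$: using $\xi^!(-)\simeq\lder\xi^*(-)\otimes_{\mathcal{O}_D}\det\mathcal{N}[-c]$ with $c=\codim_X D$, the identity $\rder\homm_X(\xi_*\mathcal{O}_D,-)\simeq\rder\Gamma(D,\xi^!(-))$, and $\lder\xi^*\xi_*\mathcal{O}_{D_p}\simeq\mathcal{O}_{D_p}\ldertimes_{\mathcal{O}_D}\lder\xi^*\xi_*\mathcal{O}_D$, I would rewrite $\rder\homm_X(\xi_*\mathcal{O}_D,\mathcal{O}_{D_p})$ as $\rder\Gamma\bigl(D_p,\ (\lder\xi^*\xi_*\mathcal{O}_D)|_{D_p}\otimes\det\mathcal{N}|_{D_p}\bigr)[-c]$. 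The regular-immersion fact that $\lder\xi^*\xi_*\mathcal{O}_D$ has cohomology sheaf $\wedge^j\mathcal{N}^\vee$ in degree $-j$, together with $\wedge^j\mathcal{N}^\vee\otimes\det\mathcal{N}\simeq\wedge^{c-j}\mathcal{N}$, then produces a hypercohomology spectral sequence in which the groups $H^i_{D_p}(\wedge^j\mathcal{N}|_{D_p})$ sit in total degree $i+j$, only $(i,j)=(0,0)$ and $(i,j)=(\dim D_p,c)$ being able to contribute, in degrees $0$ and $c+\dim D_p$. Given (1) the spectral sequence degenerates (all other terms vanish); since $D_p$ is connected, $H^0(\mathcal{O}_{D_p})=k$, and since $D_p$ is Gorenstein and satisfies (2), Serre duality gives $H^{\dim D_p}_{D_p}(\det\mathcal{N}|_{D_p})\simeq H^0(\mathcal{O}_{D_p})^\vee=k$; hence (a) holds with $d_p=-(c+\dim D_p)<0$. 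For the reverse, when $\xi$ is Arinkin-Caldararu the complex $\lder\xi^*\xi_*\mathcal{O}_D$ splits as $\bigoplus_j\wedge^j\mathcal{N}^\vee[j]$, so $\rder\homm_X(\xi_*\mathcal{O}_D,\mathcal{O}_{D_p})\simeq\bigoplus_{i,j}H^i_{D_p}(\wedge^j\mathcal{N}|_{D_p})[-i-j]$; comparing with (a) (using $d_p=-(c+\dim D_p)$ from Prop.~\ref{prps-the-shift-of-L_E-is-the-difference-in-dimensions}) forces (1) and forces $H^0(\mathcal{O}_{D_p})=k$, i.e. $D_p$ connected.

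For (b) I would invoke \S\ref{section-the-canonical-morphism-alpha}: since $d_p=-(c+\dim D_p)<0$ — known in the ``if'' direction from the Gorenstein hypothesis and in the ``only if'' direction from the previous paragraph — Cor.~\ref{cor-if-n-spherical-with-positive-n-then-no-need-for-alpha} reduces (b) to the mere existence of an abstract isomorphism $E^\vee\otimes\pi_X^!\mathcal{O}_X\simeq E^\vee\otimes\pi_Z^!(\mathcal{L}_E)$. Writing $E^\vee\otimes G\simeq\gamma_*\gamma^!G$ and using $\pi_X^!\mathcal{O}_X=\lder\pi_Z^*\omega_Z^\bullet$, $\pi_Z^!\mathcal{O}_Z=\lder\pi_X^*\omega_X^\bullet$, together with $\gamma^!\pi_X^!\mathcal{O}_X=\xi^!\mathcal{O}_X=\det\mathcal{N}[-c]$ and $\gamma^!\pi_Z^!(\mathcal{L}_E)=\pi^!\mathcal{O}_Z\otimes\lder\pi^*\mathcal{L}_E=\omega_{D/Z}^\bullet\otimes\lder\pi^*\mathcal{L}_E$ (where $\omega_{D/Z}^\bullet$ is the relative dualizing complex of the flat map $\pi$, with $\omega_{D/Z}^\bullet|_{D_p}\simeq\omega_{D_p}^\bullet$), that abstract isomorphism becomes $\det\mathcal{N}[-c]\simeq\omega_{D/Z}^\bullet\otimes\lder\pi^*\mathcal{L}_E$ in $D(D)$. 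Restricting to a fibre and using that $\mathcal{L}_E$ is invertible with $\mathcal{L}_E|_p=k[d_p]$ (Prop.~\ref{prps-tfae-left-cotwist-is-an-equivalence}, together with (a)), this reads $\omega_{D_p}^\bullet\simeq\det\mathcal{N}|_{D_p}[\dim D_p]$ — i.e. the dualizing complex of $D_p$ is a shift of a line bundle, which says exactly that $D_p$ is Gorenstein with $\omega_{D_p}\simeq\det\mathcal{N}|_{D_p}=\omega_{D/X}|_{D_p}$. Conversely, given (2) and connectedness of the fibres, $\omega_{D/Z}^{-1}\otimes\det\mathcal{N}$ is a line bundle on $D$ trivial on every fibre, hence pulled back from $Z$ by cohomology-and-base-change, so the abstract isomorphism holds. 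This gives (b) $\Leftrightarrow$ (2) in the ``if'' direction, and in the ``only if'' direction it extracts at once the Gorenstein property of the fibres and condition (2) from the fact that $\alpha$ is an isomorphism.

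I expect the genuine obstacle to be the translation of (b). The morphism $\alpha$ and the line bundle $\mathcal{L}_E$ are defined only indirectly — $\alpha$ as a component of the sphericity structure and $\mathcal{L}_E$ as a cone \eqref{eqn-morphism-defining-L_E-intro} on $Z$ — so collapsing (b) to the single checkable condition (2) forces one to carry these through Grothendieck duality for the three maps $\xi,\pi_X,\pi_Z$ simultaneously while keeping every shift and twist consistent, and it relies essentially on the vanishing criterion of \S\ref{section-the-canonical-morphism-alpha} being available, i.e. on $d_p<0$. The secondary difficulty is the spectral-sequence bookkeeping of paragraph two and the observation that, without the Arinkin-Caldararu splitting, that spectral sequence need not degenerate — which is exactly why the implication ``$D$ spherical $\Rightarrow$ (1)'' is asserted only for Arinkin-Caldararu immersions.
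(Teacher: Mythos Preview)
Your strategy matches the paper's: verify condition (1) of Theorem~\ref{theorem-sphericity-for-orthogonal-objects-of-ZxX} via the spectral sequence coming from the cohomology sheaves $\wedge^j\mathcal{N}^\vee$ of $\xi^*\xi_*\mathcal{O}_D$, then use $d_p<0$ (Corollary~\ref{cor-if-n-spherical-with-positive-n-then-no-need-for-alpha}) to reduce condition (2) to an abstract isomorphism of relative dualizing complexes. Two cosmetic differences: the paper uses the $(\xi^*,\xi_*)$ adjunction to get $\rder\homm_X(\xi_*\mathcal{O}_D,\xi_{p*}\mathcal{O}_{D_p})\simeq\rder\homm_D(\xi^*\xi_*\mathcal{O}_D,\iota_{D_p*}\mathcal{O}_{D_p})$ rather than your $(\xi_*,\xi^!)$ route (which avoids your appeal to $\lder\xi^*\xi_*\mathcal{O}_{D_p}\simeq\mathcal{O}_{D_p}\ldertimes\lder\xi^*\xi_*\mathcal{O}_D$, an identification that in general only holds on cohomology sheaves), and it packages the Gorenstein half of the argument into Proposition~\ref{prps-sphericity-for-Gorenstein-flat-fibrations} rather than going directly to Theorem~\ref{theorem-sphericity-for-orthogonal-objects-of-ZxX}.

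There is one genuine gap in your ``if'' direction for (b). Your cohomology-and-base-change step yields $\omega_{D/X}\simeq\omega_{D/Z}\otimes\pi^*L'$ for \emph{some} line bundle $L'\in\picr Z$, but the abstract isomorphism required by Corollary~\ref{cor-if-n-spherical-with-positive-n-then-no-need-for-alpha} is $E^\vee\otimes\pi_X^!\mathcal{O}_X\simeq E^\vee\otimes\pi_Z^!(\mathcal{L}_E)$ with the \emph{specific} object $\mathcal{L}_E=L[-(c+d)]$ already fixed by condition (a); you have not shown $L'\simeq L$. The paper closes this by identifying $L$ directly: via Proposition~\ref{prps-alternative-description-of-L_D}, $\mathcal{L}_D$ is the cone of $\mathcal{O}_Z\to\pi_*(\xi^*\xi_*\mathcal{O}_D)^\vee$, so $L$ is the degree-$(c+d)$ cohomology sheaf of that target, which the same spectral sequence computes as $R^d\pi_*\omega_{D/X}$; Lemma~\ref{lemma-fibrewise-to-global-for-regular-immersions} then converts this into $\omega_{D/X}\simeq\pi^*L\otimes\omega_{D/Z}$. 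A smaller point: in the ``only if'' direction you deduce connectedness of $D_p$ only under the Arinkin--Caldararu hypothesis, whereas the theorem asserts it unconditionally; the paper obtains it by noting that the corner term $E_2^{0,0}=H^0_{D_p}(\mathcal{O}_{D_p})$ of the spectral sequence receives and emits no differentials, hence equals the degree-$0$ part of $k\oplus k[-(c+d)]$.
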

The \em `If' \rm implication here generalises the result in
\cite{Horja-DerivedCategoryAutomorphismsFromMirrorSymmetry},
where $Z$, $W$ and $X$ are assumed to be smooth.  
The same argument works for any object in $D(W)$ 
and not just $\mathcal{O}_W$. We also obtain the converse
implication. Any spherical fibration $W$ which 
is Arinkin-Caldararu must therefore satisfy 
$H_{W_p}^i(\mathcal{O}_{W_p}) = 0$ for all $i > 0$, which matches 
the fact that in the known examples the fibres of spherical 
fibrations are Fano varieties.  

Section \ref{section-preliminaries} contains the preliminaries necessary for all of the above. 
In \S\ref{section-adjunction-units-and-fm-transforms} we
work out explicitly the morphisms of kernels which underly the left and 
right adjunction units of a general Fourier--Mukai functor. We 
need this to compute $F_E$ since co-twist functors need to be 
defined as the cones of adjunction units. We get this result
for free from the similar result for adjunction co-units in 
\cite{AnnoLogvinenko-OnTakingTwistsOfFourierMukaiFunctors} using
the Grothendieck duality arguments summarized in 
\S\ref{section-on-duality-theories}. We then review the formalism 
of spherical functors in Section 
\S\ref{section-twists-co-twists-and-spherical-functors}. 

Finally, in the Appendix
we give an example of an orthogonally spherical object which is 
not a spherical fibration and which is a genuine complex and 
not just a shifted sheaf. It arises naturally when constructing 
an affine braid group action on $(n,n)$-fibre of the Grothendieck-Springer 
resolution of the nilpotent cone of
$\mathfrak{s}\mathfrak{l}_{2n}(\mathbb{C})$. 
The authors hope that the tools developed in this paper will allow
to construct more examples of orthogonally spherical objects which 
aren't sheaves and to study explicitly the derived autoequivalences
which they induce. 

\em Acknowledgements: \rm We would like to thank Will Donovan, Miles Reid, 
and Richard Thomas for enlightening discussions in the course of  
this manuscript's preparation. 
The second author did most of his work on this paper at 
the University of Warwick and would like to thank it for being 
a helpful and stimulating research environment. 

\section{Preliminaries}
\label{section-preliminaries}

\em Notation: \rm Throughout the paper we define our schemes over 
the base field $k$ which is assumed to be an 
algebraically closed field of characteristic $0$. We also 
denote by $\vectspaces$ the category of finite-dimensional 
vector spaces over $k$. 
Given a fibre product $X_1 \times \dots \times X_n$ we usually
denote by $\pi_i$ the projection
$X_1 \times \dots \times X_n \rightarrow X_i$ onto the $i$-th
component. 

Let $X$ be a scheme. We denote by $D_{\text{qc}}(X)$, resp. $D(X)$, 
the full subcategory of the derived category of
$\mathcal{O}_X$-$\modd$ consisting of complexes with 
quasi-coherent, resp. bounded and coherent, cohomology.
Given an object $E$ in $D(\mathcal{O}_X\text{-}\modd)$ 
we denote by $\mathcal{H}^i(E)$ the $i$-th cohomology sheaf of $E$
and by $E^\vee$ its derived dual, 
the object $\rder\shhomm_{X}(E, \mathcal{O}_X)$.  

\bf All the functors in this paper are assumed to be derived 
unless mentioned otherwise. \rm

We therefore omit all the usual $\rder$'s and $\lder$'s. An exception 
is made for the derived bi-functor $\rder\homm_X(-,-)$ of taking the 
space of morphisms between a pair of sheaves in $\cohcat(X)$.  
This is to distinguish for any $A,B \in D(X)$ 
the complex $\rder\homm_X(A,B)$ in
$D(\vectspaces)$ from the vector space $\homm_{D(X)}(A,B)$ 
of morphisms from $A$ to $B$ in $D(X)$. Another exception 
was made for the derived bi-functor $\rder\shhomm_X(-,-)$ of taking
the sheaf of morphisms between a pair of sheaves. This is for it 
to still look like a curly version of $\rder\homm_X(-,-)$. 

All the categories we consider are most certainly $1$-categories. 
However given a morphism
$A \rightarrow B$ in a category we can consider it as a 
(trivial) commutative diagram. For two commutative diagrams 
of the same shape there is a well defined notion of them 
being isomorphic, e.g. in our case $A \rightarrow B$ is isomorphic 
to another diagram $A' \rightarrow B'$ if and only if 
there exist isomorphisms which make the square
\begin{align*}
\xymatrix{
A \ar[d]_{\simeq} \ar[r]  &
B \ar[d]^{\simeq} \\
A' \ar[r] &
B'
}
\end{align*}
commute. Sometimes as an abuse of notation we describe this 
by saying that morphism $A \rightarrow  B$ 
is `isomorphic' to morphism $A' \rightarrow B'$. 
Clearly this imposes an equivalence relation on the set of morphisms 
in a given category. This equivalence relation is important in the context 
of triangulated categories because all the morphisms in the same equivalence 
class have isomorphic cones.
 
\subsection{On duality theories}
\label{section-on-duality-theories}

The standard reference on Grothendieck-Verdier duality has for 
some time been \cite{Hartshorne-Residues-and-Duality}. There 
the duality theory is constructed by hand in a (comparatively) geometric and
(comparatively) painful fashion. For a more modern and (comparatively) 
more elegant categorical approach which obtains the existence of
the right adjoint to $f_*$ by pure thought we can recommend 
the reader Lipman's excellent exposition in
\cite{Lipman-NotesOnDerivedFunctorsAndGrothendieckDuality} which 
expands greatly on the Deligne's elegant but brief note
\cite{DeligneCohomologieASupportPropre}. 
Below we give a brief overview of the results we intend to use. 
Our approach relies heavily on the notion of a perfect  
object in a derived category, both in an absolute sense and relative to a
morphism. The reader may find this discussed at length in 
\cite{IllusieGeneralitesSurLesConditionsDeFinitudeDansLesCategoriesDerivees}
and \cite{IllusieConditionsDeFinitudeRelative}. 

Let $S$ be a Noetherian scheme. Let $\fintype_S$ be the category
of separated schemes of finite type over $S$ whose morphisms
are separated $S$-scheme maps of finite type. 
We have the following (relative) duality theory $D_{\bullet/S}$ 
for schemes in $\fintype_S$: for any $X \xrightarrow{f} S$ 
let $D_{X/S}$ denote the functor 
$\rder \shhomm\left(-,f^! \mathcal{O}_X\right)$ 
from $D(\mathcal{O}_X\text{-}\modd)$ to 
$D(\mathcal{O}_X\text{-}\modd)^{op}$. Here $(-)^!$ is the twisted
inverse image pseudo-functor, cf. 
\cite[Theorem 4.8.1]{Lipman-NotesOnDerivedFunctorsAndGrothendieckDuality}. 
It follows from
\cite[Cor. 4.9.2]{IllusieConditionsDeFinitudeRelative}
that $D_{X/S}$ takes $D_{S\text{-}\perf}(X)$, 
the full subcategory of $D(X)$ consisting 
of objects perfect over $S$, to itself in the opposite category 
and the restriction is a self-inverse equivalence 
$$ D_{X/S}\colon\quad D_{S\text{-}\perf}(X) \xrightarrow{\sim}
D_{S\text{-}\perf}(X)^{op}. $$

Now, given any two schemes $X$ and $Y$ in $\fintype_S$ and any exact functor 
$F\colon D_{S\text{-}\perf}(X) \rightarrow D_{S\text{-}\perf}(Y)$
we define its dual under $D_{\bullet/S}$ to be 
the functor $D_{Y/S}\; F\; D_{X/S}\colon 
D_{S\text{-}\perf}(X) \rightarrow D_{S\text{-}\perf}(Y)$. 
The double-dual of a functor is then the functor itself and we say 
that $F$ and $D_{Y/S}\; F\; D_{X/S}$ are \em dual under
$D_{\bullet/S}$\rm. 
The (contravariant) notion of a dual of a morphism of functors is 
defined accordingly. One can then easily see that if a functor has 
a left (resp. right) adjoint then $D_{\bullet/S}$ sends it to 
the right (resp. left) adjoint of its dual and interchanges 
the adjunction units with the adjunction co-units. 

Let $X$ be a scheme in $\fintype_S$ and let $E$ be a perfect 
(in an absolute sense) object of $D(\mathcal{O}_X\text{-}\modd)$. 
Then the functor $E \otimes(-)$ takes $D_{S\text{-}\perf}(X)$
to $D_{S\text{-}\perf}(X)$, its adjoint, 
both left and right, is the functor $E^\vee \otimes(-)$
and for any $F \in D(\mathcal{O}_X\text{-}\modd)$ we have
by 
\cite[Lemma 1.4.6]{AvramovIyengarLipman-ReflexivityAndRigidityForComplexesIISchemes}
a natural isomorphism
\begin{align}
\label{eqn-S-perfect-duality-for-tensor-product}
D_{X/S}(E \otimes F) \xrightarrow{\sim} E^\vee \otimes D_{X/S} F.
\end{align}
Therefore $E \otimes (-)$ and $E^\vee \otimes (-)$ 
are dual under $D_{\bullet/S}$. Consequently,
$D_{\bullet/S}$ interchanges the adjunction 
unit $\id \rightarrow E^\vee \otimes E \otimes (-) $ and 
the adjunction co-unit $E^\vee \otimes E \otimes \rightarrow \id$. 

Let $X \xrightarrow{f} Y$ be a proper map in $\fintype_S$. Then 
$f_*$ sends $D_{S\text{-}\perf}(X)$ to $D_{S\text{-}\perf}(Y)$.
By the sheafifed Grothendieck duality, cf. 
\cite[Cor.
4.4.2]{Lipman-NotesOnDerivedFunctorsAndGrothendieckDuality},
for any $E \in D_{qc}(X)$ the natural map 
\begin{align} \label{eqn-S-perf-duality-for-f-pushforward}
D_{Y/S}(f_* E) \xrightarrow{\sim} f_*(D_{X/S} E). 
\end{align}
is an isomorphism. It follows that $f_*$ is self-dual under $D_{\bullet/S}$. 

On the other hand, let $X \xrightarrow{f} Y$ be any map
in $\fintype_S$. By \cite[Prop. 4.10.1]{Lipman-NotesOnDerivedFunctorsAndGrothendieckDuality} 
there is for any $E \in D(Y)$ a natural isomorphism 
\begin{align} \label{eqn-S-perf-duality-for-f-pullback}
D_{X/S}(f^*E) \xrightarrow{\sim} f^!(D_{Y/S} E). 
\end{align}
If $f^*$ takes 
$D_{S\text{-}\perf}(Y)$ to $D_{S\text{-}\perf}(X)$, e.g. $f$ is perfect, 
it follows that $f^*$ and $f^!$ are dual under $D_{\bullet/S}$. 

Note that $f^*$ is the left adjoint of $f_*$ and, if $f$ is proper, $f^!$
is its right adjoint. So for $f$ proper and perfect $f^*$ and $f^!$
being dual under $D_{\bullet/S}$ is precisely equivalent to $f_*$ 
being self-dual.  

If $f$ is proper, then even 
if $f^*$ doesn't take $S$-perfect objects to $S$-perfect objects, 
it still follows from 
the definitions of maps \eqref{eqn-S-perf-duality-for-f-pushforward} 
and \eqref{eqn-S-perf-duality-for-f-pullback} in
\cite{Lipman-NotesOnDerivedFunctorsAndGrothendieckDuality} that
for any $E \in D(Y)$ the following diagram commutes
\begin{align} \label{eqn-duality-functor-still-sends-unit-to-co-unit}
\xymatrix{
D_{Y/S} \left(f_* f^* E\right) \ar[rr]^<<<<<<<<<<<<<{(\id \rightarrow f_* f^*)^{opp}} 
\ar[d]^{\simeq}_{\eqref{eqn-S-perf-duality-for-f-pushforward} + 
\eqref{eqn-S-perf-duality-for-f-pullback}} 
& \quad & 
D_{Y/S} E 
\\
f_* f^! D_{Y/S} E \ar[urr]_{f_* f^! \rightarrow \id} 
& \quad &
}
\end{align}
i.e. $D_{\bullet/S}$ still send the adjunction unit $\id \rightarrow
f_* f^*$ to the adjunction co-unit $f_* f^! \rightarrow \id$. 
We then also have:
\begin{lemma}
\label{lemma-natural-map-f^!=f^*-f^!-is-iso-for-perf-and-S-perf}
Let $X \xrightarrow{f} Y$ be any map in $\fintype_S$, 
let $E$ be a perfect object in $D(\modd\text{-}Y)$ and 
let $F$ be an $S$-perfect object in $D(\modd\text{-}Y)$. 
Then the natural map 
\begin{align} \label{eqn-natural-map-f^!=f^*-f^!}
f^* E \otimes f^! F \rightarrow f^!(E \otimes F) 
\end{align}
is an isomorphism.
\end{lemma}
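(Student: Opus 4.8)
The plan is to prove this by dévissage on $E$. First I would observe that whether \eqref{eqn-natural-map-f^!=f^*-f^!} is an isomorphism is a local question on $X$, and that $(-)^!$, $f^*$, the derived tensor product, and the formation of the map \eqref{eqn-natural-map-f^!=f^*-f^!} all commute with restriction to open subschemes (\cite{Lipman-NotesOnDerivedFunctorsAndGrothendieckDuality}). Since $Y$ is Noetherian and $E$ is perfect, one may therefore shrink $Y$ to an affine open over which $E$ is strictly perfect; then $E$ lies in the thick triangulated subcategory $\langle \mathcal{O}_Y \rangle$ of $D_{qc}(Y)$ generated by the structure sheaf, and it suffices to prove the assertion with an arbitrary $G \in \langle \mathcal{O}_Y \rangle$ in place of $E$.

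Next, fix $F$ and regard $G \mapsto f^* G \otimes f^! F$ and $G \mapsto f^!(G \otimes F)$ as functors $D_{qc}(Y) \to D_{qc}(X)$. Both are triangulated, since each of $f^*$, $(-) \otimes f^! F$, $(-) \otimes F$ and $f^!$ is triangulated, and \eqref{eqn-natural-map-f^!=f^*-f^!}, read as a natural transformation in the variable $G$, is a morphism of triangulated functors: it is compatible with shifts and with mapping cones. Let $\mathcal{C} \subseteq D_{qc}(Y)$ be the full subcategory of objects $G$ for which \eqref{eqn-natural-map-f^!=f^*-f^!}, with $E$ replaced by $G$, is an isomorphism. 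By the five lemma applied to the induced morphism of distinguished triangles, $\mathcal{C}$ is a triangulated subcategory; since the map is additive in $G$, $\mathcal{C}$ is moreover closed under direct summands, hence thick. Finally $\mathcal{O}_Y \in \mathcal{C}$, because for $G = \mathcal{O}_Y$ the natural map becomes, under the canonical identifications $f^* \mathcal{O}_Y \otimes f^! F \cong f^! F \cong f^!(\mathcal{O}_Y \otimes F)$, the identity of $f^! F$. Therefore $\langle \mathcal{O}_Y \rangle \subseteq \mathcal{C}$, so $E \in \mathcal{C}$ locally on $Y$, and \eqref{eqn-natural-map-f^!=f^*-f^!} is an isomorphism.

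The one step deserving care is the verification that \eqref{eqn-natural-map-f^!=f^*-f^!} is a morphism of triangulated functors in $E$ and restricts to the identity when $E = \mathcal{O}_Y$; this amounts to unwinding the construction of this projection-type map out of the projection formula and the (co)unit formalism of $(-)^!$ in \cite{Lipman-NotesOnDerivedFunctorsAndGrothendieckDuality}. It is worth remarking that the argument uses only that $E$ is perfect and makes no use of the $S$-perfectness of $F$.
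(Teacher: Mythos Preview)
Your dévissage argument is correct. The paper proceeds quite differently: it unwinds the definition of the map \eqref{eqn-natural-map-f^!=f^*-f^!} as the $f_*$-adjoint of the composition $f_*(f^*E \otimes f^!F) \to E \otimes f_* f^! F \to E \otimes F$, and then uses the relative duality $D_{\bullet/S}$ (together with the isomorphisms \eqref{eqn-S-perfect-duality-for-tensor-product}--\eqref{eqn-S-perf-duality-for-f-pullback} and the diagram \eqref{eqn-duality-functor-still-sends-unit-to-co-unit}) to rewrite this adjoint as the $D_{\bullet/S}$-dual of the standard isomorphism
\[
f^*(E^\vee \otimes D_{\bullet/S} F) \xrightarrow{\ f^*(-\otimes-)\to f^*\otimes f^*\ } f^*E^\vee \otimes f^* D_{\bullet/S} F,
\]
which is visibly an isomorphism. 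The $S$-perfectness of $F$ enters through the biduality $D_{\bullet/S} D_{\bullet/S} F \simeq F$, and the perfectness of $E$ through \eqref{eqn-S-perfect-duality-for-tensor-product}.

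Your route is more elementary and, as you note, makes no use of the $S$-perfectness of $F$: any $F$ for which $f^!F$ is defined would do. The paper's route is less direct but is tailored to the surrounding discussion, whose whole point is that $D_{\bullet/S}$ exchanges $f^*$ with $f^!$ and units with counits; exhibiting \eqref{eqn-natural-map-f^!=f^*-f^!} as literally dual to the pullback-tensor compatibility is in keeping with that theme and gives an explicit identification of the map rather than just an isomorphy statement.
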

\begin{proof}
By compactification
\cite{Nagata-ImbeddingOfAnAbstractVarietyInACompleteVariety}  
such $f$ decomposes into an open immersion followed
by a proper map.

If $f$ is an open immersion, then the map 
\eqref{eqn-natural-map-f^!=f^*-f^!} is by definition the 
isomorphism $f^* E \otimes f^* F \rightarrow f^*(E \otimes F)$. 

It remains to consider the case of $f$ being a proper map. 
Then, by definition, the map $\eqref{eqn-natural-map-f^!=f^*-f^!}$
is the right adjoint with respect to $f_*$ of the composition
\begin{align}
\label{eqn-left-adjoint-of-natural-map-f^!=f^*-f^!}
f_*(f^* E \otimes f^! F) 
\xrightarrow{\text{inverse of projection formula map}}
E \otimes f_* f^! F 
\xrightarrow{f_* f^!  \rightarrow \id}
E \otimes F.
\end{align}
Using the duality isomorphism $D_{\bullet/S} D_{\bullet/S} F \simeq F$,
isomorphisms $\eqref{eqn-S-perfect-duality-for-tensor-product}$-$\eqref{eqn-S-perf-duality-for-f-pullback}$ and  
$\eqref{eqn-duality-functor-still-sends-unit-to-co-unit}$, we 
can re-write $\eqref{eqn-left-adjoint-of-natural-map-f^!=f^*-f^!}$
as 
\begin{align}
\label{eqn-dual-of-left-adjoint-of-natural-map-f^!=f^*-f^!}
D_{\bullet/S} \quad
\left(
E^\vee \otimes D_{\bullet/S} F
\xrightarrow{\id \rightarrow f_* f^*}
E^\vee \otimes f_* f^* D_{\bullet/S} F 
\xrightarrow{\text{projection formula map}}
f_*(f^* E^\vee \otimes f^* D_{\bullet/S} F) 
\right)^{\text{opp}}
\end{align}
which by 
\cite{AnnoLogvinenko-OnTakingTwistsOfFourierMukaiFunctors}, Lemma 2.1
is the same map as 
\begin{align}
\label{eqn-dual-of-left-adjoint-of-dual-of-f^*-AB-=-f^*-A-f^*-B-iso}
D_{\bullet/S} \quad 
\left( E^\vee \otimes D_{\bullet/S} F
\xrightarrow{\id \rightarrow f_* f^*}
f_* f^*(E^\vee \otimes D_{\bullet/S} F)
\xrightarrow{ f^*(\text{-} \otimes \text{-}) \rightarrow f^* \otimes f^* }
f_* \left(f^* E^\vee \otimes f^* D_{\bullet/S} F \right)
\right)^{\text{opp}}.
\end{align} 
Using
$\eqref{eqn-S-perfect-duality-for-tensor-product}$-$\eqref{eqn-S-perf-duality-for-f-pullback}$,
$\eqref{eqn-duality-functor-still-sends-unit-to-co-unit}$ and
$D_{\bullet/S} D_{\bullet/S} F \simeq F$
again, we deduce that \eqref{eqn-left-adjoint-of-natural-map-f^!=f^*-f^!}
is the same map as 
\begin{align}
\label{eqn-left-adjoint-of-dual-of-f^*-AB-=-f^*-A-f^*-B-iso}
f_* (f^* E \otimes f^! F)
\xrightarrow{f_* \alpha} 
f_* f^! (E \otimes F)
\xrightarrow{f_* f^! \rightarrow \id}
E \otimes F
\end{align}
where the map $\alpha$ is isomorphic to  
\begin{align}
\label{eqn-dual-of-f^*-AB-=-f^*-A-f^*-B-iso}
D_{\bullet/S} \quad
\left( f^*(E^\vee \otimes D_{\bullet/S} F) 
\xrightarrow{ f^*(\text{-} \otimes \text{-}) \rightarrow f^* \otimes f^* }
f^*E^\vee \otimes f^* D_{\bullet/S} F \right)^\text{opp}.
\end{align}
Since the right adjoint of 
$\eqref{eqn-left-adjoint-of-dual-of-f^*-AB-=-f^*-A-f^*-B-iso}$ 
with respect to $f_*$ is clearly $\alpha$, we conclude
that the natural map $\eqref{eqn-natural-map-f^!=f^*-f^!}$ is
precisely the map $\alpha$ which is evidently an isomorphism.
\end{proof}

In the special case of $S = \spec k$ the category $\fintype_{k}$
is simply the category of all schemes of finite type over $k$.
For any such scheme $X$ we have $D_{S\text{-}\perf}(X) = D^b_{coh}(X)$. 
The resulting duality theory $D_{\bullet/k}$ is the usual duality theory 
of \cite{Hartshorne-Residues-and-Duality} with $D_{X /k}(\mathcal{O}_X)$ 
being dualizing complexes in sense of \cite{Hartshorne-Residues-and-Duality}, 
Chapter V.

On the other hand we have the perfect duality theory which exists in 
the category of arbitrary schemes. Let $X$ be a scheme and let 
$\dperf_{X}$ denote the functor 
$\rder \shhomm\left(-,\mathcal{O}_X\right)$ 
from $D(\mathcal{O}_X\text{-}\modd)$ to 
$D(\mathcal{O}_X\text{-}\modd)^{op}$, i.e. $\dperf_X (E) = E^\vee$. 
It is shown in
\cite{IllusieGeneralitesSurLesConditionsDeFinitudeDansLesCategoriesDerivees},
\S7 that $\dperf_X$ takes $D_{\perf}(X)$, 
the full subcategory of $D(\modd\text{-}\mathcal{O}_X)$ consisting 
of perfect objects, to itself in the opposite category 
and the restriction is a self-inverse equivalence 
$$ \dperf_X\colon\quad D_{\perf}(X) \xrightarrow{\sim} D_{\perf}(X)^{op}. $$

Then, given any two schemes $X$ and $Y$, we define just as above
the notions of a dual under $\dperf$ of any functor $F\colon 
D_{\perf}(X) \rightarrow D_{\perf}(Y)$ and of any natural
transformation between two such functors. Once again, the duality
interchanges left adjoints with right adjoints and the adjunction 
units with the adjunction counits. 

Let $X \xrightarrow{f} Y$ be any scheme map. Then 
$f^*$ sends $D_{\perf}(Y)$ to $D_{\perf}(X)$ and we have 
(\cite{IllusieGeneralitesSurLesConditionsDeFinitudeDansLesCategoriesDerivees},
Prps. 7.1.2) for any $E \in D_{\perf}(Y)$ a natural isomorphism
\begin{align}
\dperf_X f^* E \xrightarrow{\sim} f^* \dperf_Y E. 
\end{align}
It follows that $f^*$ is self-dual under $\dperf$. 

Now let $X \xrightarrow{f} Y$ be any scheme map such that 
$f_*$ sends $D_{\perf}(X)$ to $D_{\perf}(Y)$, e.g. a quasi-perfect map 
of concentrated schemes
(\cite{Lipman-NotesOnDerivedFunctorsAndGrothendieckDuality}, \S 4.7).
Then, since $f^*$ is self-dual, the dual of $f_*$ under $\dperf$
is the left adjoint $f_{\dagger}$ of $f^*$. And when $f$ is a separated
finite-type perfect map of Noetherian schemes, we know
(\cite{AvramovIyengarLipman-ReflexivityAndRigidityForComplexesIISchemes}, 
Lemma 2.1.10) that $f_\dagger(-)$ is naturally isomorphic to 
$f_*(f^!(\mathcal{O}_Y) \otimes -)$ in a way which makes the 
composition 
$$ f_\dagger f^* \xrightarrow{\sim} f_*\left(f^!(\mathcal{O}_Y) \otimes
f^*(-)\right) \xrightarrow{\sim} f_* f^! \xrightarrow{\text{the adjunction
co-unit}} \id $$
be precisely the adjunction co-unit $f_\dagger f^* \rightarrow \id$. 

\subsection{Adjunction units and Fourier--Mukai transforms}
\label{section-adjunction-units-and-fm-transforms}

The definition of a spherical functor $S$ in
\cite{AnnoLogvinenko-SphericalDGFunctors}
demands for $S$ to be a Fourier-Mukai transform whose left and right 
adjoints $L$ and $R$ are also Fourier Mukai transforms. Moreover, 
to compute the twist $T_S$ and the co-twist $F_S$ of $S$ we need
to write down the units and the co-units of these adjunctions on 
the level of Fourier-Mukai kernels. 

Partly this was achieved in \S3.1 of
\cite{AnnoLogvinenko-OnTakingTwistsOfFourierMukaiFunctors}. 
We give a brief summary here. Quite generally, 
let $X_1$ and $X_2$ be two separated proper schemes 
of finite type over $k$ and let $E$ be
a perfect object in $D(X_1 \times X_2)$. We have a commutative 
diagram of projection morphisms:
\begin{align} \label{eqn-big-projection-tree}
\xymatrix{
& & X_1 \times X_2 \times X_1 \ar[ld]_{\pi_{12}} \ar[d]^{\pi_{13}} \ar[rd]^{\pi_{23}} & & \\
& X_1 \times X_2 \ar[ld]_{\pi_1} \ar[rd]^>>>>>>>{\pi_2} & X_1 \times X_1
\ar[lld]_>>>>>>>>>>>>>>{\tilde{\pi}_1}
\ar[rrd]^>>>>>>>>>>>>>>{\tilde{\pi}_2} & X_2 \times X_1 \ar[ld]_>>>>>>>{\pi_2} \ar[rd]^{\pi_1} & \\
X_1 & & X_2 & & X_1
}
\end{align}

Let $\Phi_E\colon D(X_1) \rightarrow D(X_2)$ be the Fourier--Mukai 
transform $\pi_{2 *}\left(E \otimes \pi^*_1(-)\right)$ with kernel 
$E$, then:
\begin{enumerate}
\item A left adjoint $\Phi^{\mathrm ladj}_E$ to $\Phi_E$ exists and 
is isomorphic to the Fourier--Mukai transform $\Psi_{E^\vee \otimes
\pi_1^!(\mathcal{O}_{X1})}$ from $D(X_2)$ to $D(X_1)$. 
\item A right adjoint $\Phi^{\mathrm radj}_E$ to $\Phi_E$ exists and is isomorphic to 
the Fourier--Mukai transform $\Psi_{E^\vee \otimes
\pi_2^!(\mathcal{O}_{X2})}$ from $D(X_2)$ to $D(X_1)$. 
\item The adjunction co-unit $\Phi^{\mathrm ladj}_E \Phi_E \rightarrow
\id_{D(X_1)}$ is isomorphic to the morphism $\Theta_{Q} \rightarrow 
\Theta_{\mathcal{O}_\Delta}$ of Fourier--Mukai transforms $D(X_1)
\rightarrow D(X_1)$ induced by 
the morphism $Q \rightarrow \mathcal{O}_\Delta$ of objects of 
$D(X_1 \times X_1)$ written down explicitly in
\cite[Theorem 3.1]{AnnoLogvinenko-OnTakingTwistsOfFourierMukaiFunctors}
to which we refer the reader for all the details. 
An analogous statement holds for the adjunction co-unit
$\Phi_E \Phi^{\mathrm radj}_E \rightarrow \id_{D(X_2)}$, cf. 
\cite[Theorem 3.2]{AnnoLogvinenko-OnTakingTwistsOfFourierMukaiFunctors}.  

\item The condition of $X_1$ and $X_2$ being proper can be replaced
by the condition of the support of $E$ being proper over $X_1$ and
over $X_2$, cf. \S2.2 of 
\cite{AnnoLogvinenko-OnTakingTwistsOfFourierMukaiFunctors}.
If $E$ is a pushforward of an object in the
derived category of a closed subscheme $X_1 \times X_2$
proper over $X_1$ and $X_2$, then there is an alternative description
of the morphisms of Fourier--Mukai kernels which produce 
the adjunction co-units
$\Phi^{\mathrm ladj}_E \Phi_E \rightarrow \id_{D(X_1)}$ and 
$\Phi_E \Phi^{\mathrm radj}_E \rightarrow \id_{D(X_2)}$, 
cf. \cite[Theorems 4.1 and 4.2]{AnnoLogvinenko-OnTakingTwistsOfFourierMukaiFunctors}.
\end{enumerate}

What remains to be done is to obtain a similar result for the adjunction 
units $\id_{D(X_1)} \rightarrow \Phi^{\mathrm radj}_E \Phi_E$ 
and $\id_{D(X_2)} \rightarrow \Phi_E \Phi^{\mathrm ladj}_E$. Fortunately
this can be obtained directly from the above results in
\cite{AnnoLogvinenko-OnTakingTwistsOfFourierMukaiFunctors} via
the Grothendieck-Verdier duality in the following way.

The dual of the Fourier--Mukai transform
$$ \Phi_E (-) = \pi_{2 *} \left( E \otimes \pi_1^*(-)\right) $$
under the duality theory $D_{\bullet/k}$ (see Section
\ref{section-on-duality-theories}) is the functor 
\begin{align} \label{eqn-dual-of-fm-transform}
\pi_{2 *} \rder \shhomm\left(E, \pi_1^!(-)\right). 
\end{align}
There are two ways to view this functor. Firstly,
via natural isomorphisms 
\begin{align} \label{eqn-natural-isos-in-dual-of-fm-transform}
\rder\shhomm\left(E,\pi_1^! \mathcal{O}_{X1}\right) \otimes \pi_1^*(-)
\xrightarrow{\sim} \rder\shhomm\left(E,\pi_1^! \mathcal{O}_{X1} \otimes
\pi_1^*(-)\right) \xrightarrow{\sim}
\rder\shhomm\left(E, \pi_1^!(-)\right)
\end{align}
we can identify \eqref{eqn-dual-of-fm-transform} 
with the Fourier--Mukai transform $\Phi_{\rder \shhomm(E,
\pi_1^! \mathcal{O}_{X1})}$ from $D(X_1)$ to $D(X_2)$. Secondly, observe
that \eqref{eqn-dual-of-fm-transform} is 
the right adjoint $\Psi^{\mathrm radj}_E$ of the Fourier--Mukai transform 
$\Psi_E$ from $D(X_2)$ to $D(X_1)$. 

Taking this second point of view, it immediately follows that 
the dual of $\Phi^{\mathrm radj}_E$ is $\Psi_E$ and the 
dual of the adjunction unit 
\begin{align} \label{eqn-fm-right-unit}
\id_{D(X_1)} \rightarrow \Phi^{\mathrm radj}_E \Phi_E 
\end{align}
is the adjunction co-unit
\begin{align} \label{eqn-dual-fm-counit}
\Psi_E \Psi^{\mathrm radj}_E \rightarrow \id_{D(X_1)}. 
\end{align}
By \cite[Theorem 3.2]{AnnoLogvinenko-OnTakingTwistsOfFourierMukaiFunctors},
the adjunction co-unit \eqref{eqn-dual-fm-counit} is isomorphic
to the natural transformation 
\begin{align} \label{eqn-fm-transform-realising-dual-counit} 
\Theta_{\tilde{Q}} \rightarrow \Theta_{\mathcal{O}_\Delta} 
\end{align}
of Fourier--Mukai tranforms $D(X_1) \rightarrow D(X_1)$ induced by
the following morphism of objects of $D(X_1 \times X_1)$: 
\begin{align} \label{eqn-derived-restriction-morphism-radj-version}
\tilde{Q} = \pi_{13 *}\left(\pi_{12}^* E^\vee \otimes \pi_{23}^* E
\otimes \pi_{12}^* \pi^!_1(\mathcal{O}_{X_1})\right) 
\xrightarrow{\id \rightarrow \Delta_* \Delta^*}
\pi_{13 *} \Delta_* \Delta^* 
\left(\pi_{12}^* E^\vee \otimes \pi_{23}^* E \otimes \pi_{12}^*
\pi^!_1(\mathcal{O}_{X_1})\right) \\
\label{eqn-two-squares-isomorphism-radj-version}
\pi_{13 *} \Delta_* \Delta^* 
\left(\pi_{12}^* E^\vee \otimes \pi_{23}^* E \otimes \pi_{12}^*
\pi^!_1(\mathcal{O}_{X_1})\right)
\quad \simeq \quad
\Delta_* \pi_{1 *}  
\left(E^\vee \otimes E \otimes \pi^!_1(\mathcal{O}_{X_1})\right)
\\ 
\label{eqn-E-E-dual-trace-morphism-radj-version}
\Delta_* \pi_{1 *}  
\left(E \otimes E^\vee \otimes \pi^!_1(\mathcal{O}_{X_1})\right) 
\xrightarrow{E^\vee \otimes E \otimes \rightarrow \id}
\Delta_* \pi_{1 *} \left(\pi^!_1(\mathcal{O}_{X_1})\right)  
\\
\label{eqn-rpi_1-trace-morphism-radj-version}
\Delta_* \pi_{1 *} \left(\pi^!_1(\mathcal{O}_{X_1})\right) 
\xrightarrow{\pi_{1 *} \pi^!_1 \rightarrow \id}
\Delta_* \mathcal{O}_{X_1}. 
\end{align}

Identifying\footnote{One has to be a little careful here since
$\mathcal{O}_\Delta$, unlike $\tilde{Q}$, is not a perfect object 
of $D(X_1 \times X_1)$. However, both natural maps in 
the analogue of \eqref{eqn-natural-isos-in-dual-of-fm-transform}
are still isomorphisms so we can still 
make the same identification.} the duals of $\Theta_{\tilde{Q}}$ and 
$\Theta_{\mathcal{O}_\Delta}$ under $D(\bullet/k)$ with 
$\Theta_{\rder \shhomm(\tilde{Q}, \tilde{\pi}_1^! \mathcal{O}_{X_1})}$
and $\Theta_{\rder \shhomm(\mathcal{O}_\Delta, \tilde{\pi}_1^!
\mathcal{O}_{X_1})}$, we see that the dual of 
\eqref{eqn-fm-transform-realising-dual-counit} under $D_{\bullet/k}$
is the morphism of Fourier--Mukai transforms induced by the morphism
\begin{align} 
\rder \shhomm(\mathcal{O}_\Delta, \tilde{\pi}_1^!  \mathcal{O}_{X_1})
\rightarrow \rder \shhomm(\tilde{Q}, \tilde{\pi}_1^!
\mathcal{O}_{X_1})
\end{align} 
obtained by applying the relative dualizing functor
$D_{X_1 \times X_1 /X_1} = \rder \shhomm( - , \tilde{\pi}_1^!  \mathcal{O}_{X_1})$ to 
$\eqref{eqn-derived-restriction-morphism-radj-version}-
\eqref{eqn-rpi_1-trace-morphism-radj-version}$. 

Treating 
$\eqref{eqn-derived-restriction-morphism-radj-version}-
\eqref{eqn-rpi_1-trace-morphism-radj-version}$ as morphisms 
of functors in $\mathcal{O}_{X_1}$ and applying the results of
Section \ref{section-on-duality-theories}, we
see that $D_{X_1 \times X_1 / X_1}$ applied to 
$\eqref{eqn-derived-restriction-morphism-radj-version}-
\eqref{eqn-rpi_1-trace-morphism-radj-version}$ yields:
\begin{align}
\label{eqn-pi1-unit-in-fm-right-adjunction-unit-moprhism-prelim}
\Delta_* D_{X_1/X_1}(\mathcal{O}_{X_1})
\xrightarrow{\id \rightarrow \pi_{1 *} \pi^*_1}
\Delta_* \pi_{1 *} \pi^*_1 D_{X_1/X_1}(\mathcal{O}_{X_1})
\\ 
\label{eqn-E-E-dual-unit-in-fm-right-adjunction-unit-moprhism-prelim}
\Delta_* \pi_{1 *} \pi^*_1 D_{X_1/X_1}(\mathcal{O}_{X_1})
\xrightarrow{\id \rightarrow E \otimes E^\vee \otimes }
\Delta_* \pi_{1 *} \left(E \otimes E^\vee \otimes 
\pi^*_1 D_{X_1/X_1}(\mathcal{O}_{X_1})\right)
\\
\label{eqn-rearranging-isos-in-fm-right-adjunction-unit-moprhism-prelim}
\Delta_* \pi_{1 *} \left(E \otimes E^\vee \otimes 
\pi^*_1 D_{X_1/X_1}(\mathcal{O}_{X_1})\right)
\quad \simeq \quad
\pi_{13 *} \Delta_* \Delta^! 
\left(\pi_{12}^* E \otimes \pi_{23}^* E^\vee \otimes \pi_{12}^!
\pi^*_1 D_{X_1/X_1}(\mathcal{O}_{X_1})\right)
\\ 
\label{eqn-delta-unit-in-fm-right-adjunction-unit-moprhism-prelim}
\pi_{13 *} \Delta_* \Delta^! 
\left(\pi_{12}^* E \otimes \pi_{23}^* E^\vee \otimes \pi_{12}^!
\pi^*_1 D_{X_1/X_1}\mathcal{O}_{X_1}\right)
\xrightarrow{\Delta_* \Delta^! \rightarrow \id}
\pi_{13 *} 
\left(\pi_{12}^* E \otimes \pi_{23}^* E^\vee \otimes \pi_{12}^!
\pi^*_1 D_{X_1/X_1}\mathcal{O}_{X_1}\right) 
\end{align}
By the above
$\eqref{eqn-pi1-unit-in-fm-right-adjunction-unit-moprhism-prelim}$-$\eqref{eqn-delta-unit-in-fm-right-adjunction-unit-moprhism-prelim}$
induces a natural transformation of Fourier--Mukai transforms isomorphic 
to the dual of $\eqref{eqn-fm-transform-realising-dual-counit}$. 
Since $\eqref{eqn-fm-transform-realising-dual-counit}$ is itself
isomorphic to the dual of $\id_{X1} \rightarrow \Phi^{\mathrm radj}_E \Phi_E$, we
conclude that the natural transformation induced by $\eqref{eqn-pi1-unit-in-fm-right-adjunction-unit-moprhism-prelim}$-$\eqref{eqn-delta-unit-in-fm-right-adjunction-unit-moprhism-prelim}$
is isomorphic to $\id_{X1} \rightarrow \Phi^{\mathrm radj}_E \Phi_E$. 
Finally, since $D_{X_1/X_1}(\mathcal{O}_{X_1}) \simeq \mathcal{O}_{X_1}$
and $\pi^!_{12} \pi^*_1(\mathcal{O}_{X_1}) \simeq \pi^*_{23}
\pi^!_2(\mathcal{O}_{X_2})$, we obtain:
\begin{prps} 
\label{prps-right-adjunction-unit-morphism}
Let $X_1$ and $X_2$ be two separated proper schemes of finite type over $k$
and let $E$ be a perfect object of $D(X_1 \times X_2)$. 
Then the adjunction unit $\id_{X1} \rightarrow \Phi^{\mathrm radj}_E \Phi_E$
is isomorphic to the morphism of Fourier--Mukai transforms induced
by the following morphism of objects of $D(X_1 \times X_1)$:
\begin{align}
\label{eqn-pi1-unit-in-fm-right-adjunction-unit-morphism}
\Delta_* (\mathcal{O}_{X_1})
\xrightarrow{\id \rightarrow \pi_{1 *} \pi^*_1}
\Delta_* \pi_{1 *} \pi^*_1 (\mathcal{O}_{X_1})
\\ 
\label{eqn-E-E-dual-unit-in-fm-right-adjunction-unit-morphism}
\Delta_* \pi_{1 *} \pi^*_1 (\mathcal{O}_{X_1})
\xrightarrow{\id \rightarrow E \otimes E^\vee \otimes }
\Delta_* \pi_{1 *} \left(E \otimes E^\vee \otimes 
\pi^*_1 (\mathcal{O}_{X_1})\right)
\\
\label{eqn-rearranging-isos-in-fm-right-adjunction-unit-morphism}
\Delta_* \pi_{1 *} \left(E \otimes E^\vee \otimes 
\pi^*_1 (\mathcal{O}_{X_1})\right)
\quad \simeq \quad
\pi_{13 *} \Delta_* \Delta^! 
\left(\pi_{12}^* E \otimes \pi_{23}^* E^\vee \otimes \pi_{23}^*
\pi^!_2 (\mathcal{O}_{X_2})\right)
\\ 
\label{eqn-delta-unit-in-fm-right-adjunction-unit-morphism}
\pi_{13 *} \Delta_* \Delta^! 
\left(\pi_{12}^* E \otimes \pi_{23}^* E^\vee \otimes \pi_{23}^*
\pi^!_2(\mathcal{O}_{X_2})\right)
\xrightarrow{\Delta_* \Delta^! \rightarrow \id}
\pi_{13 *} 
\left(\pi_{12}^* E \otimes \pi_{23}^* E^\vee \otimes \pi_{23}^*
\pi^!_2(\mathcal{O}_{X_2})\right) 
\end{align}
\end{prps}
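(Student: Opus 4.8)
The plan is to obtain this result entirely by Grothendieck--Verdier duality from the corresponding statement about adjunction \emph{co-units} already established in \cite{AnnoLogvinenko-OnTakingTwistsOfFourierMukaiFunctors}, exactly along the lines sketched in the paragraphs immediately preceding the Proposition. The starting observation is that the dual of the Fourier--Mukai transform $\Phi_E(-) = \pi_{2*}(E \otimes \pi_1^*(-))$ under $D_{\bullet/k}$ is the functor $\pi_{2*}\rder\shhomm(E,\pi_1^!(-))$, which by \eqref{eqn-natural-isos-in-dual-of-fm-transform} is both a Fourier--Mukai transform and, crucially, the right adjoint $\Psi_E^{\mathrm radj}$ of the Fourier--Mukai transform $\Psi_E\colon D(X_2) \to D(X_1)$. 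Since $D_{\bullet/k}$ interchanges left and right adjoints and interchanges adjunction units with adjunction co-units, the dual of $\Phi_E^{\mathrm radj}$ is $\Psi_E$ and the dual of the unit \eqref{eqn-fm-right-unit} is the co-unit \eqref{eqn-dual-fm-counit}. So it suffices to dualize the explicit kernel-level description of \eqref{eqn-dual-fm-counit} supplied by \cite{AnnoLogvinenko-OnTakingTwistsOfFourierMukaiFunctors}, Corollary~2.4, namely the composite \eqref{eqn-derived-restriction-morphism-radj-version}--\eqref{eqn-rpi_1-trace-morphism-radj-version}.

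The second step is to carry out that dualization concretely. One applies the relative dualizing functor $D_{X_1 \times X_1/X_1} = \rder\shhomm(-,\tilde\pi_1^!\mathcal{O}_{X_1})$ to the four-step composite \eqref{eqn-derived-restriction-morphism-radj-version}--\eqref{eqn-rpi_1-trace-morphism-radj-version}, viewing each stage as a morphism of functors in $\mathcal{O}_{X_1}$ and using the relative duality theory $D_{\bullet/X_1}$ of Section~\ref{section-on-duality-theories}. Here one uses, term by term: the isomorphism \eqref{eqn-S-perfect-duality-for-tensor-product} to move the relative dual past $E^\vee \otimes$, producing an $E \otimes E^\vee$ term; the isomorphisms \eqref{eqn-S-perf-duality-for-f-pushforward} and \eqref{eqn-S-perf-duality-for-f-pullback} to convert $\pi_{13*}$ to itself and $\Delta^*$ to $\Delta^!$ (and likewise the $\pi_{12}$-direction shriek/star swap); and the compatibility \eqref{eqn-duality-functor-still-sends-unit-to-co-unit} together with the analogous statement for the trace maps $f_*f^! \to \id$, which tells us that $D_{\bullet/X_1}$ sends each adjunction co-unit in \eqref{eqn-derived-restriction-morphism-radj-version}--\eqref{eqn-rpi_1-trace-morphism-radj-version} to the corresponding adjunction unit. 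This produces precisely the composite \eqref{eqn-pi1-unit-in-fm-right-adjunction-unit-moprhism-prelim}--\eqref{eqn-delta-unit-in-fm-right-adjunction-unit-moprhism-prelim} written in terms of $D_{X_1/X_1}(\mathcal{O}_{X_1})$. One then invokes the footnoted remark that, although $\mathcal{O}_\Delta$ is not perfect, the natural maps of the form \eqref{eqn-natural-isos-in-dual-of-fm-transform} are still isomorphisms, so the identification of $\Theta_{\mathcal{O}_\Delta}$ with $\Theta_{\rder\shhomm(\mathcal{O}_\Delta,\tilde\pi_1^!\mathcal{O}_{X_1})}$ under $D_{\bullet/k}$ remains valid, and hence the morphism of Fourier--Mukai transforms induced by the dualized composite is isomorphic to the dual of \eqref{eqn-fm-transform-realising-dual-counit}, which in turn is isomorphic to the unit $\id_{X_1} \to \Phi_E^{\mathrm radj}\Phi_E$.

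The final step is cosmetic cleanup: since $D_{X_1/X_1}(\mathcal{O}_{X_1}) \simeq \mathcal{O}_{X_1}$ the shifted term disappears, and since $\pi_{12}^!\pi_1^*(\mathcal{O}_{X_1}) \simeq \pi_{23}^*\pi_2^!(\mathcal{O}_{X_2})$ (a base-change/transitivity identity for the projections in \eqref{eqn-big-projection-tree}) one rewrites the twist as $\pi_{23}^*\pi_2^!(\mathcal{O}_{X_2})$, yielding the stated form \eqref{eqn-pi1-unit-in-fm-right-adjunction-unit-morphism}--\eqref{eqn-delta-unit-in-fm-right-adjunction-unit-morphism}. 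The main obstacle I anticipate is not any single hard computation but rather bookkeeping the compatibility of all the duality isomorphisms simultaneously: one must be sure that $D_{\bullet/X_1}$ really does send \emph{each} of the four morphisms in \eqref{eqn-derived-restriction-morphism-radj-version}--\eqref{eqn-rpi_1-trace-morphism-radj-version} to the matching morphism in \eqref{eqn-pi1-unit-in-fm-right-adjunction-unit-moprhism-prelim}--\eqref{eqn-delta-unit-in-fm-right-adjunction-unit-moprhism-prelim} on the nose (not merely up to unspecified isomorphism), which is exactly the content that \eqref{eqn-duality-functor-still-sends-unit-to-co-unit} and \cite{AnnoLogvinenko-OnTakingTwistsOfFourierMukaiFunctors}, Lemma~2.2 are there to guarantee; invoking them carefully is what makes the argument go through.
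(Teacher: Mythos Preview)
Your proposal is correct and follows essentially the same route as the paper: the argument preceding the Proposition in the text is exactly this duality-based derivation from \cite{AnnoLogvinenko-OnTakingTwistsOfFourierMukaiFunctors}, Corollary~2.4, via the relative dualizing functor $D_{X_1\times X_1/X_1}$, followed by the two simplifications $D_{X_1/X_1}(\mathcal{O}_{X_1})\simeq\mathcal{O}_{X_1}$ and $\pi_{12}^!\pi_1^*(\mathcal{O}_{X_1})\simeq\pi_{23}^*\pi_2^!(\mathcal{O}_{X_2})$. The only minor remark is that your reference to \cite{AnnoLogvinenko-OnTakingTwistsOfFourierMukaiFunctors}, Lemma~2.2 is not actually needed here---that lemma is used elsewhere (in the proof of Lemma~\ref{lemma-natural-map-f^!=f^*-f^!-is-iso-for-perf-and-S-perf}), while the present argument requires only the duality compatibilities \eqref{eqn-S-perfect-duality-for-tensor-product}--\eqref{eqn-duality-functor-still-sends-unit-to-co-unit}.
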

In a similar fashion we also obtain:
\begin{prps}
\label{prps-left-adjunction-unit-morphism}
Let $X_1$ and $X_2$ be two separated proper schemes of finite type over $k$
and let $E$ be a perfect object of $D(X_1 \times X_2)$. 
Then the adjunction unit $\id_{X1} \rightarrow \Psi_E \Psi^{\mathrm ladj}_E$
is isomorphic to the morphism of Fourier--Mukai transforms induced
by the following morphism of objects of $D(X_1 \times X_1)$:
\begin{align}
\label{eqn-pi1-unit-in-fm-left-adjunction-unit-morphism}
\Delta_* (\mathcal{O}_{X_1})
\xrightarrow{\id \rightarrow \pi_{1 *} \pi^*_1}
\Delta_* \pi_{1 *} \pi^*_1 (\mathcal{O}_{X_1})
\\ 
\label{eqn-E-E-dual-unit-in-fm-left-adjunction-unit-morphism}
\Delta_* \pi_{1 *} \pi^*_1 (\mathcal{O}_{X_1})
\xrightarrow{\id \rightarrow E \otimes E^\vee \otimes }
\Delta_* \pi_{1 *} \left(E \otimes E^\vee \otimes 
\pi^*_1 (\mathcal{O}_{X_1})\right)
\\
\label{eqn-rearranging-isos-in-fm-left-adjunction-unit-morphism}
\Delta_* \pi_{1 *} \left(E \otimes E^\vee \otimes 
\pi^*_1 (\mathcal{O}_{X_1})\right)
\quad \simeq \quad
\pi_{13 *} \Delta_* \Delta^! 
\left(\pi_{12}^* E^\vee \otimes \pi_{23}^* E \otimes \pi_{12}^*
\pi^!_2 (\mathcal{O}_{X_2})\right)
\\ 
\label{eqn-delta-unit-in-fm-left-adjunction-unit-morphism}
\pi_{13 *} \Delta_* \Delta^! 
\left(\pi_{12}^* E^\vee \otimes \pi_{23}^* E \otimes \pi_{12}^*
\pi^!_2(\mathcal{O}_{X_2})\right)
\xrightarrow{\Delta_* \Delta^! \rightarrow \id}
\pi_{13 *} 
\left(\pi_{12}^* E^\vee \otimes \pi_{23}^* E \otimes \pi_{12}^*
\pi^!_2(\mathcal{O}_{X_2})\right) 
\end{align}
\end{prps}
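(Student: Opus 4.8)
The plan is to run the argument of Proposition~\ref{prps-right-adjunction-unit-morphism} essentially verbatim, interchanging the roles of the left and right adjoints and replacing the Grothendieck--Verdier duality $D_{\bullet/k}$ by the perfect duality $\dperf$ of Section~\ref{section-on-duality-theories}. This substitution is the one new ingredient: the $D_{\bullet/k}$-dual of $\Psi_E$ is $\Phi^{\mathrm radj}_E$, whose kernel $E^\vee\otimes\pi_2^!(\mathcal{O}_{X_2})$ need not be perfect, so that route stalls; but the $\dperf$-dual of $\Psi_E$ will turn out to be $\Phi^{\mathrm ladj}_E$ and the $\dperf$-dual of $\Psi^{\mathrm ladj}_E$ to be $\Phi_E$, so that the unit we want becomes dual to the co-unit $\Phi^{\mathrm ladj}_E\Phi_E\to\id_{X1}$, which is already made explicit in \cite{AnnoLogvinenko-OnTakingTwistsOfFourierMukaiFunctors}, Theorem~2.1.

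First I would check that the adjunction $(\Psi^{\mathrm ladj}_E\dashv\Psi_E)$ restricts to the subcategories of perfect objects, so that $\dperf$ applies to it. For $\Psi_E$ this is clear since its kernel $E$ is perfect; for $\Psi^{\mathrm ladj}_E\simeq\pi_{2\dagger}(E^\vee\otimes\pi_1^*(-))$ it holds because $\pi_2\colon X_1\times X_2\to X_2$ is proper (as $X_1$ is proper) and flat, hence quasi-perfect, so $\pi_{2*}$ --- and therefore its $\dperf$-dual $\pi_{2\dagger}$ --- preserves perfect complexes. Next I would compute the duals, using from Section~\ref{section-on-duality-theories} that $f^*$ is self-dual under $\dperf$, that $E^\vee\otimes(-)$ is dual to $E\otimes(-)$, and that the dual of a proper pushforward $f_*$ is $f_\dagger(-)\simeq f_*(f^!(\mathcal{O})\otimes(-))$: this gives that the $\dperf$-dual of $\Psi_E=\pi_{1*}(E\otimes\pi_2^*(-))$ is $\pi_{1\dagger}(E^\vee\otimes\pi_2^*(-))\simeq\Psi_{E^\vee\otimes\pi_1^!\mathcal{O}_{X_1}}=\Phi^{\mathrm ladj}_E$, and that the $\dperf$-dual of $\Psi^{\mathrm ladj}_E$ is $\pi_{2*}(E\otimes\pi_1^*(-))=\Phi_E$. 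Since $\dperf$ interchanges left adjoints with right adjoints and adjunction units with co-units, it follows that the unit $\id_{X1}\to\Psi_E\Psi^{\mathrm ladj}_E$ is isomorphic, as a morphism of functors $D(X_1)\to D(X_1)$, to the $\dperf$-dual of the co-unit $\Phi^{\mathrm ladj}_E\Phi_E\to\id_{X1}$.

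That co-unit, by \cite{AnnoLogvinenko-OnTakingTwistsOfFourierMukaiFunctors}, Theorem~2.1, is the morphism of Fourier--Mukai transforms induced by a morphism $Q\to\mathcal{O}_\Delta$ of objects of $D(X_1\times X_1)$ of exactly the shape of \eqref{eqn-derived-restriction-morphism-radj-version}--\eqref{eqn-rpi_1-trace-morphism-radj-version}, but with $E$ and $E^\vee$ interchanged on the pullbacks along $\pi_{12}$ and $\pi_{23}$. It remains to take its $\dperf$-dual. On the level of kernels on $X_1\times X_1$, the $\dperf$-dual of an endofunctor $\Theta_K$ is $\Theta_{K^\vee\otimes\tilde\pi_2^!\mathcal{O}_{X_1}}$, i.e. $K$ is acted on by the relative dualizing functor $D_{X_1\times X_1/X_1}=\rder\shhomm(-,\tilde\pi_2^!\mathcal{O}_{X_1})$ taken along the \emph{second} projection (the proof of Proposition~\ref{prps-right-adjunction-unit-morphism} used the first); in particular $\Theta_{\mathcal{O}_\Delta}=\id$ is self-dual, the identification being legitimate even though $\mathcal{O}_\Delta$ is not perfect because the relevant natural maps remain isomorphisms. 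Treating $Q\to\mathcal{O}_\Delta$ as a composition of morphisms of functors in $\mathcal{O}_{X_1}$ and dualizing it term by term with respect to $D_{\bullet/X_1}$ --- using the compatibility \eqref{eqn-duality-functor-still-sends-unit-to-co-unit} and the isomorphisms \eqref{eqn-S-perfect-duality-for-tensor-product}--\eqref{eqn-S-perf-duality-for-f-pullback} --- turns each trace $\pi_*\pi^!\to\id$ into an adjunction unit $\id\to\pi_*\pi^*$, each evaluation $E^\vee\otimes E\otimes\to\id$ into the coevaluation $\id\to E\otimes E^\vee\otimes$, and each restriction $\id\to\Delta_*\Delta^*$ into the corresponding $\Delta_*\Delta^!\to\id$. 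Simplifying with $D_{X_1/X_1}(\mathcal{O}_{X_1})\simeq\mathcal{O}_{X_1}$ and with the identification of the dualizing twist --- the analogue of $\pi_{12}^!\pi_1^*\mathcal{O}_{X_1}\simeq\pi_{23}^*\pi_2^!\mathcal{O}_{X_2}$, which here produces $\pi_{12}^*\pi_2^!\mathcal{O}_{X_2}$ --- then yields precisely the morphism \eqref{eqn-pi1-unit-in-fm-left-adjunction-unit-morphism}--\eqref{eqn-delta-unit-in-fm-left-adjunction-unit-morphism}.

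The only genuinely new point compared with Proposition~\ref{prps-right-adjunction-unit-morphism} is the observation that $\pi_{2\dagger}$, equivalently $\Psi^{\mathrm ladj}_E$, preserves perfect complexes, which licenses the use of $\dperf$ in place of $D_{\bullet/k}$; everything else is bookkeeping. The main obstacle I anticipate is exactly that bookkeeping: one must keep careful track of which of the three projections from $X_1\times X_2\times X_1$ carries $E$, which carries $E^\vee$ and which carries the $\pi^!$-twist, and along which of the two projections $X_1\times X_1\to X_1$ the relative dualizing functor is taken, since it is precisely these choices that account for the differences between the morphism displayed here and the one in Proposition~\ref{prps-right-adjunction-unit-morphism}.
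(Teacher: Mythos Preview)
Your proposal is correct and is precisely the natural way to flesh out the paper's laconic ``in a similar fashion we also obtain''. The paper gives no explicit proof of this proposition, but Section~\ref{section-on-duality-theories} sets up both duality theories $D_{\bullet/k}$ and $\dperf$ side by side, and you are right that the switch from $D_{\bullet/k}$ to $\dperf$ is exactly what is needed here: under $D_{\bullet/k}$ the dual of $\Psi_E$ is $\Phi^{\mathrm{radj}}_E$ rather than $\Phi^{\mathrm{ladj}}_E$, so the resulting co-unit is not one of the standard ones from \cite{AnnoLogvinenko-OnTakingTwistsOfFourierMukaiFunctors}, whereas under $\dperf$ the dual of the pair $(\Psi^{\mathrm{ladj}}_E,\Psi_E)$ is $(\Phi_E,\Phi^{\mathrm{ladj}}_E)$ and the relevant co-unit is exactly Theorem~2.1 of \cite{AnnoLogvinenko-OnTakingTwistsOfFourierMukaiFunctors}. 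Your observation that $\pi_{2\dagger}$ preserves perfect complexes (because $\pi_2$ is proper and flat, hence quasi-perfect) is the one genuine check required, and your remark that on kernels the $\dperf$-dualization is the relative duality $D_{X_1\times X_1/X_1}$ along $\tilde\pi_2$ rather than $\tilde\pi_1$ is the bookkeeping point that accounts for the placement of $\pi_{12}^*\pi_2^!(\mathcal{O}_{X_2})$ in the final formula.
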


If $X_1$ and $X_2$ are not proper, but the support of $E$ 
is proper over $X_1$ and $X_2$, one can still apply 
the above results via compactification as described in \S3.2 of 
\cite{AnnoLogvinenko-OnTakingTwistsOfFourierMukaiFunctors}.
If $E$ is a pushforward of an object in the derived category
of a closed subscheme $W \hookrightarrow X_1 \times X_2$ proper over
both $X_1$ and $X_2$ one can also dualize Theorems 4.1 and 4.2 of 
\cite{AnnoLogvinenko-OnTakingTwistsOfFourierMukaiFunctors} to
obtain an alternative description of morphisms of kernels which induce
both adjunction units. We leave this as an exercise for the reader. 
\subsection{Twists, co-twists and spherical functors} 
\label{section-twists-co-twists-and-spherical-functors}

Let $X_1$ and $X_2$ be, as before, two separated proper schemes 
of finite type over $k$. Let $E$ be a perfect object in 
$D(X_1 \times X_2)$ and let $\Phi_E$ be the Fourier--Mukai transform
from $D(X_1)$ to $D(X_2)$ with kernel $E$.
In Section \ref{section-adjunction-units-and-fm-transforms}
we've produced 
morphisms of Fourier-Mukai kernels which induce the adjunction units
and co-units of $\Phi_E$, $\Phi^{ladj}_E$ and $\Phi^{radj}_E$. 
Taking cones of these morphisms allows us to construct the functorial
exact triangles in the following definition:
\begin{defn}
We define \em the twist $T_E$\rm, \em the dual twist $T'_E$\rm,
\em the co-twist $F_E$,\rm and \em the dual co-twist $F'_E$ \rm of
$\Phi_E$ by the functorial exact triangles
\begin{align} \label{eqn-twist-triangle}
\Phi_E  \Phi^{\mathrm radj}_E \rightarrow &\id_{D(X_2)} \rightarrow T_E, \\
\notag 
T'_E \rightarrow &\id_{D(X_2)} \rightarrow \Phi_E  \Phi^{\mathrm ladj}_E, \\ 
\label{eqn-cotwist-triangle} 
F_E \rightarrow &\id_{D(X_1)} \rightarrow 
\Phi^{\mathrm radj}_E  \Phi_E, \\
\notag \Phi^{\mathrm ladj}_E  \Phi_E \rightarrow &\id_{D(X_1)} 
\rightarrow F'_E
\end{align}
constructed via the morphisms of Fourier-Mukai kernels 
produced in Section \ref{section-adjunction-units-and-fm-transforms}. 
\end{defn}
In \cite[Prop. 5.3]{AnnoLogvinenko-SphericalDGFunctors} we proved
that $T'_E$ and $F'_E$ are the left adjoints of $T_E$ and $F_E$,
respectively. 

Consider now the following two natural transformations 
\begin{align}\label{eqn-sphericity-condition-natural-isomorphism-twist}
\Phi^{\mathrm ladj}_E T_E [-1]
\xrightarrow{T_E[-1] \rightarrow \Phi_E \Phi^{\mathrm radj}_E 
\text{ in } \eqref{eqn-twist-triangle}}
\Phi^{\mathrm ladj}_E \Phi_E \Phi^{\mathrm radj}_E
\xrightarrow{\Phi^{\mathrm ladj}_E\Phi_E \rightarrow \id} 
\Phi^{\mathrm radj}_E, 
\end{align}
\begin{align}\label{eqn-sphericity-condition-natural-isomorphism-cotwist}
\Phi^{\mathrm radj}_E
\xrightarrow{\id \rightarrow \Phi_E \Phi^{\mathrm ladj}_E} 
\Phi^{\mathrm radj}_E \Phi_E \Phi^{\mathrm ladj}_E
\xrightarrow{\Phi^{\mathrm radj}_E \Phi_E \rightarrow F_E[1] \text{ in
} 
\eqref{eqn-cotwist-triangle}}
F_E [1] \Phi^{\mathrm ladj}_E. 
\end{align}

The following key notion was introduced in 
\cite{AnnoLogvinenko-SphericalDGFunctors}:
\begin{defn}\label{defn-spherical-functor}
We say that the Fourier--Mukai transform $\Phi_E$ is \em a spherical 
functor \rm if:
\begin{enumerate}
\item $T_E$ is an autoequivalence of $D(X_2)$, 
\item $F_E$ is an autoequivalence of $D(X_1)$, 
\item  
$ \Phi^{\mathrm ladj}_E T_E [-1]
\xrightarrow{ \eqref{eqn-sphericity-condition-natural-isomorphism-twist} }
\Phi^{\mathrm radj}_E$
is an isomorphism of functors (``the twist identifies the adjoints''),
\item
$\Phi^{\mathrm radj}_E
\xrightarrow{ \eqref{eqn-sphericity-condition-natural-isomorphism-cotwist} }
F_E [1] \Phi^{\mathrm ladj}_E
$
is an isomorphism of functors (``the co-twist identifies the adjoints'').
\end{enumerate}
\end{defn}

The following is the main result of \cite{AnnoLogvinenko-SphericalDGFunctors}: 
\begin{theorem}[\cite{AnnoLogvinenko-SphericalDGFunctors}, Theorem 5.1]
Any two of the conditions in Definition \ref{defn-spherical-functor}
imply all four. 
\end{theorem}
\begin{cor}\label{cor-spherical-functors-for-dummies} 
If $F_E$ is an autoequivalence of $D(X_1)$ and if
$\Phi^{\mathrm radj}_E
\xrightarrow{ \eqref{eqn-sphericity-condition-natural-isomorphism-cotwist} }
F_E [1] \Phi^{\mathrm ladj}_E
$ is an isomorphism, then $\Phi_E$ is a spherical functor. 
\end{cor}

\begin{lemma} 
\label{lemma-sphericity-condition-the-commutation-condition}
The composition $\eqref{eqn-sphericity-condition-natural-isomorphism-cotwist}$
is the unique morphism 
$\Phi^{\mathrm radj}_E \xrightarrow{\alpha} F_E[1] \Phi^\mathrm{ladj}_E$ which 
makes the following diagram commute:
\begin{align} \label{eqn-sphericity-condition-the-commutation-condition}
\xymatrix{
\Phi^{\mathrm radj}_E \Phi_E  \ar^{\eqref{eqn-cotwist-triangle}}[r] 
\ar_{\alpha}[d] 
& F_E[1] \\
F_E[1] \Phi^{\mathrm ladj}_E \Phi_E \ar_{\text{ adj. co-unit}}[ur] &
}
\end{align}
\end{lemma}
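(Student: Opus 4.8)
The claim is that the composition in \eqref{eqn-sphericity-condition-natural-isomorphism} is the unique fill-in $\alpha$ making the triangle \eqref{eqn-sphericity-condition-the-commutation-condition} commute. The plan is to set up the relevant octahedron/commutative diagram relating the adjunction unit, the adjunction co-units, and the co-twist triangle, and then argue both existence (that \eqref{eqn-sphericity-condition-natural-isomorphism} is \emph{a} fill-in) and uniqueness (that there is only one such $\alpha$) separately. Throughout I would work with the underlying morphisms of functors, since all adjunction units/co-units here are honest morphisms of functors (being induced by morphisms of Fourier--Mukai kernels, as set up in \S\ref{section-adjunction-units-and-fm-transforms}), so triangulated-category gluing arguments are available even though cones are not functorial in general.

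\emph{Existence.} First I would unwind the definition of \eqref{eqn-sphericity-condition-natural-isomorphism}: it is the composite of $\Phi^{\mathrm radj}_E \xrightarrow{\text{unit}} \Phi^{\mathrm radj}_E \Phi_E \Phi^{\mathrm ladj}_E$ with the map $\Phi^{\mathrm radj}_E\Phi_E\Phi^{\mathrm ladj}_E \to F_E[1]\Phi^{\mathrm ladj}_E$ obtained by whiskering the structure map $\Phi^{\mathrm radj}_E\Phi_E \to F_E[1]$ of the triangle \eqref{eqn-left-cotwist-triangle} on the right by $\Phi^{\mathrm ladj}_E$. Composing this $\alpha$ with the map $F_E[1]\Phi^{\mathrm ladj}_E\Phi_E \xrightarrow{\text{adj.\ co-unit}} F_E[1]$ amounts to whiskering $F_E[1]$ onto the adjunction co-unit $\Phi^{\mathrm ladj}_E\Phi_E \to \id$; so I need to check that the two ways around \eqref{eqn-sphericity-condition-the-commutation-condition} agree, i.e. that
\begin{align*}
\bigl(\Phi^{\mathrm radj}_E\Phi_E \xrightarrow{\text{unit on left}} \Phi^{\mathrm radj}_E\Phi_E\Phi^{\mathrm ladj}_E\Phi_E \xrightarrow{(\Phi^{\mathrm radj}_E\Phi_E \to F_E[1])} F_E[1]\Phi^{\mathrm ladj}_E\Phi_E \xrightarrow{\text{co-unit}} F_E[1]\bigr)
\end{align*}
equals the structure map $\Phi^{\mathrm radj}_E\Phi_E \to F_E[1]$ itself. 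The first leg followed by the last is, by the triangle identity for the adjunction $(\Phi^{\mathrm ladj}_E,\Phi_E)$, the identity on $\Phi^{\mathrm radj}_E\Phi_E$ after the appropriate cancellation; more precisely, naturality of the map $\Phi^{\mathrm radj}_E\Phi_E \to F_E[1]$ with respect to the endofunctor $\Phi^{\mathrm ladj}_E\Phi_E \to \id$ lets me slide the co-unit past it, reducing the composite to $(\Phi^{\mathrm radj}_E\Phi_E \to F_E[1])$ precomposed with $(\id \to \Phi^{\mathrm ladj}_E\Phi_E \to \id) = \id$. This is the heart of the existence half.

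\emph{Uniqueness.} For uniqueness, suppose $\alpha$ and $\alpha'$ both fill in \eqref{eqn-sphericity-condition-the-commutation-condition}. Then $\beta := \alpha - \alpha'\colon \Phi^{\mathrm radj}_E \to F_E[1]\Phi^{\mathrm ladj}_E$ has the property that composing it with the adjunction co-unit $F_E[1]\Phi^{\mathrm ladj}_E\Phi_E \to F_E[1]$ gives zero. By adjunction $(\Phi^{\mathrm ladj}_E,\Phi_E)$, morphisms $\Phi^{\mathrm radj}_E \to F_E[1]\Phi^{\mathrm ladj}_E$ correspond bijectively to morphisms $\Phi^{\mathrm radj}_E\Phi_E \to F_E[1]$ via precisely this operation (postcompose-with-unit then cancel via the co-unit), so the vanishing of that composite forces $\beta = 0$. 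Hence $\alpha = \alpha'$. I would phrase this cleanly as: the assignment $\gamma \mapsto (\text{co-unit})\circ(\gamma \Phi_E)$ is the adjunction isomorphism $\homm(\Phi^{\mathrm radj}_E, F_E[1]\Phi^{\mathrm ladj}_E) \xrightarrow{\sim} \homm(\Phi^{\mathrm radj}_E\Phi_E, F_E[1])$, and the diagram \eqref{eqn-sphericity-condition-the-commutation-condition} just says $\alpha$ is the preimage of the fixed map $\eqref{eqn-left-cotwist-triangle}$.

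\emph{Main obstacle.} The only real subtlety is bookkeeping: making sure the various whiskerings and the two triangle identities are applied on the correct side, and that the map labelled ``adj.\ co-unit'' in \eqref{eqn-sphericity-condition-the-commutation-condition} is the co-unit of $(\Phi^{\mathrm ladj}_E,\Phi_E)$ whiskered by $F_E[1]$ on the left. Once the adjunction $(\Phi^{\mathrm ladj}_E,\Phi_E)$ is used to transport everything to $\homm(\Phi^{\mathrm radj}_E\Phi_E, F_E[1])$, both existence and uniqueness become formal, and no genuinely triangulated input (no octahedron, no non-functoriality worry) is needed beyond the fact that \eqref{eqn-sphericity-condition-natural-isomorphism} is composed of honest natural transformations.
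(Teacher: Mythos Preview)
Your proposal is correct and follows essentially the same approach as the paper. Both arguments reduce to the triangle identities for the adjunction $(\Phi^{\mathrm ladj}_E,\Phi_E)$: the paper writes out the explicit commutative diagrams witnessing this, while you package the same content as the 2-categorical adjunction bijection $\homm(\Phi^{\mathrm radj}_E, F_E[1]\Phi^{\mathrm ladj}_E) \xrightarrow{\sim} \homm(\Phi^{\mathrm radj}_E\Phi_E, F_E[1])$, under which the composition \eqref{eqn-sphericity-condition-natural-isomorphism} is visibly the preimage of the structure map from \eqref{eqn-left-cotwist-triangle}. Your final paragraph is in fact the cleanest formulation, since it handles existence and uniqueness simultaneously; the paper's version unpacks exactly this bijection into two separate diagram chases.
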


\begin{proof}

We first show that the composition
$\eqref{eqn-sphericity-condition-natural-isomorphism-cotwist}$ 
makes $\eqref{eqn-sphericity-condition-the-commutation-condition}$ 
commute. Indeed, composing each term with $\Phi_E$ and 
composing the whole isomorphism with the adjunction co-unit 
$\Phi^\mathrm{ladj}_E \Phi_E \rightarrow \id_{D(X_1)}$ we obtain
\begin{align}
\label{eqn-canonical-iso-plugged-into-commutation-condition-1}
\Phi^{\mathrm radj}_E \Phi_E \xrightarrow{\text{adj. unit}} 
\Phi^{\mathrm radj}_E \Phi_E \Phi^{\mathrm ladj}_E \Phi_E
\xrightarrow{\eqref{eqn-cotwist-triangle}}
F_E[1] \Phi^{\mathrm ladj}_E \Phi_E \xrightarrow{\text{adj. co-unit}}
F_E[1].
\end{align}
Since clearly the following square commutes
\begin{align}
\xymatrix{ 
\Phi^{\mathrm radj}_E \Phi_E \Phi^{\mathrm ladj}_E \Phi_E 
\ar^{\eqref{eqn-cotwist-triangle}}[r]
\ar_{\text{adj. co-unit}}[d] &
F_E[1] \Phi^{\mathrm ladj}_E \Phi_E 
\ar^{\text{adj. co-unit}}[d] \\
\Phi^{\mathrm radj}_E \Phi_E 
\ar_{\eqref{eqn-cotwist-triangle}}[r] &
F_E[1]
}
\end{align}
the composition 
$\eqref{eqn-canonical-iso-plugged-into-commutation-condition-1}$
equals to
\begin{align}
\label{eqn-canonical-iso-plugged-into-commutation-condition-2}
\Phi^{\mathrm radj}_E \Phi_E 
\xrightarrow{\text{adj. unit}} 
\Phi^{\mathrm radj}_E \Phi_E \Phi^{\mathrm ladj}_E \Phi_E
\xrightarrow{\text{adj. co-unit}}
\Phi^{\mathrm radj}_E \Phi_E
\xrightarrow{\eqref{eqn-cotwist-triangle}}
F_E[1] 
\end{align}
and is therefore simply $\Phi^{\mathrm radj}_E \Phi_E 
\xrightarrow{\eqref{eqn-cotwist-triangle}} F_E[1]$, as required.

Conversely, let $\alpha\colon 
\Phi^{\mathrm radj}_E \rightarrow F_E \Phi^\mathrm{ladj}_E[1]$ be a
morphism which makes 
$\eqref{eqn-sphericity-condition-the-commutation-condition}$ commute. 
We then have a commutative diagram
\begin{align}
\xymatrix{
\Phi^{\mathrm radj}_E 
\ar^{\text{adj. unit}}[rr] 
\ar^{\alpha}[d] 
& &
\Phi^{\mathrm radj}_E \Phi_E \Phi^{\mathrm ladj}_E 
\ar^{\eqref{eqn-cotwist-triangle}}[rr]
\ar^{\alpha}[d] 
& &
F_E [1] \Phi^{\mathrm ladj}_E
\ar^{=}[d] 
\\
F_E[1] \Phi^\mathrm{ladj}_E
\ar_{\text{adj. unit}}[rr] 
& &
F_E[1] \Phi^\mathrm{ladj}_E \Phi_E \Phi^{\mathrm ladj}_E 
\ar_{\text{adj. co-unit}}[rr]
& &
F_E [1] \Phi^{\mathrm ladj}_E  
} 
\end{align}
Since the bottom row is the identity morphism, we conclude
that $\alpha$ equals to the morphism given by the top row, 
i.e. to the composition $\eqref{eqn-sphericity-condition-natural-isomorphism-cotwist}$. 
\end{proof}

\subsection{Miscellaneous}
\label{section-miscellaneous}

In this section we give two technical lemmas we make use
of throughout the paper. 

Recall that the \em support \rm $\supp E$ of an object $E$ 
in the derived category $D(\mathcal{O}_X\text{-}\modd)$ of a scheme $X$ is 
the union of the supports of its cohomology sheaves $\mathcal{H}^i(E)$. 
The \em support \rm of a coherent sheaf $\mathcal{F}$ on a scheme $X$
is defined, as per \cite{Harts77}, to be the set of all $x \in X$
such that the stalk $\mathcal{F}_x$ is not zero. 
By \cite[Ex. 5.6(c)]{Harts77} the support of a coherent sheaf on 
a noetherian scheme is closed. These are the definitions employed in e.g.
\cite{AvramovIyengarLipman-ReflexivityAndRigidityForComplexesIISchemes}
whose results we make use of. 

\begin{lemma}
\label{lemma-pullback-nonzero-iff-the-point-lies-in-the-support}
Let $X$ be a noetherian scheme and let $E \in D(X)$.  
A point $x \in X$ lies in $\supp E$ if and only if 
$\iota^*_x E \neq 0$. 
\end{lemma}
\begin{proof}
 
First, we claim that given a coherent sheaf $\mathcal{F}$ on $X$ a
point $x \in X$ lies in $\supp \mathcal{F}$ if and only if the ordinary,
non-derived pullback $\lder^0 \iota^*_x \mathcal{F} \neq 0$. This is because 
the stalk $\mathcal{F}_x$ is a finite $\mathcal{O}_{X,x}$-module and by 
\cite{Mats86} any finite module for a Noetherian local ring 
has a minimal free resolution
$$ \dots \rightarrow L_2 \rightarrow L_1 \rightarrow L_0 $$
whose differentials die under $\iota^*_x$, i.e.  
$\dim \lder^i \iota^*_x \mathcal{F} = \rank L_i$. By definition 
$x \in \supp \mathcal{F}$ if and only $\mathcal{F}_x \neq 0$. 
On the other hand, $\mathcal{F}_x \neq 0$ if and only if $L_0 \neq 0$, 
which by above is equivalent to $\lder^0 \iota^*_x \mathcal{F} \neq 0$. 

Now let $E$ be an object of $D^b_{coh}(X)$. Consider 
the standard spectral sequence 
$$ \lder^{p} \iota^{*}_{x} \mathcal{H}^{q} E 
\Rightarrow \lder^{p + q}  \iota^{*}_{x} E. $$
Suppose $\lder^{0} \iota^{*}_{x} \mathcal{H}^{q} E \neq 0$ for some 
$q \in \mathbb{Z}$. Take minimal $q$ for which this holds --- 
we can do that since $\mathcal{H}^j(E) \neq 0$ for only finite number of 
$j \in \mathbb{Z}$. 
Then $\lder^{0} \iota^{*}_{x} \mathcal{H}^{q} E$ is the lower-left
corner of the non-zero terms of the spectral sequence and hence survives
yielding $\lder^{q} \iota^{*}_{x} E \neq 0$.
On the other hand, if $\lder^{0} \iota^{*}_{x} \mathcal{H}^{q} E = 0$ 
for all $q \in \mathbb{Z}$, all the higher pullbacks
$\lder^{p} \iota^{*}_{x} \mathcal{H}^{q} E = 0$ also vanish 
by the minimal free resolution argument above. Thus all terms of 
the spectral sequence are zero and thus $\iota^{*}_{x} E = 0$. 

We have thus shown that $\iota^{*}_{x} E \neq 0$ if and only 
if $\lder^{0} \iota^{*}_{x} \mathcal{H}^{q} E \neq 0$ for
some $q \in \mathbb{Z}$. By the first claim, this is equivalent
to $x \in \supp \mathcal{H}^{q} E$ for some $q \in \mathbb{Z}$
and that is the definition of $x$ lying in $\supp E$. 
\end{proof}

\begin{lemma}
\label{lemma-morphism-of-FM-kernels-induces-iso-of-FMs-iff-itis-an-iso-itself}
Let $X_1$ and $X_2$ be two noetherian schemes.  
Let $E_1$ and $E_2$ be two objects of $D(X_1 \times X_2)$
and let $\alpha$ be a morphism from $E_1$ to $E_2$. Then 
$\alpha$ is an isomorphism if and only if the induced morphism 
of functors $\Phi_{E_1} \rightarrow \Phi_{E_2}$ is an isomorphism.
\end{lemma}
\begin{proof}
The `only if' statement is obvious. For the `if' statement
we use the fact that for any closed point $p \in X_1$ and 
any $A$ in $D(X_1)$ we have a natural isomorphism 
$\Phi_{A}(\mathcal{O}_p) \xrightarrow{\sim}
\iota^*_p (A)$ which is functorial in $A$. 
So if $\Phi_{E_1} \rightarrow \Phi_{E_2}$
is an isomorphism then the pullback of $\alpha$ 
to any closed point of $X_1$ is an isomorphism.
This implies that the pullback of the cone $\alpha$ to 
any closed point of $X_1$ is $0$. 
By Lemma \ref{lemma-pullback-nonzero-iff-the-point-lies-in-the-support} 
the cone of $\alpha$ is itself $0$, and thus $\alpha$ is an isomorphism. 
\end{proof}

\section{Orthogonally spherical objects}
\label{section-orthogonally-spherical-objects}

Let $Z$ and $X$ be two separable schemes of finite type over $k$.
Given a closed point $p$ in $Z$ we denote by $\iota_p$ the
closed immersion $\spec k \hookrightarrow Z$ and by 
$\iota_{Xp}$ the corresponding immersion $X \hookrightarrow 
Z \times X$:
\begin{align} \label{eqn-inclusion-of-fibre-over-p}
\xymatrix{
X\; \ar[d]_{\pi_{k}} \ar@{^{(}->}[r]^>>>>>>{\iota_{Xp}} &
Z \times X \ar[d]^{\pi_{Z}} \ar[dr]^{\pi_{X}}\\
\spec k \; \ar@{^{(}->}[r]^>>>>>>>{\iota_{p}} & Z & X
}
\end{align}
Given a perfect object $E$ in $D(Z \times X)$ we define
\em the fibre $E_p$ of $E$ at $p$ \rm to be the object $\iota^*_{Xp} E$ in
$D(X)$. In this way we can think of any perfect object in 
$D(Z \times X)$ as a family of objects of $D(X)$ parametrised
by $Z$. 
We assume throughout this section that either $Z$ and $X$
are both proper or that the support of the object $E$ in $Z \times X$ 
is proper over both $Z$ and $X$. This ensures that all of our Fourier--Mukai 
transforms take complexes
with bounded coherent cohomologies to complexes with bounded coherent
cohomologies. It also makes applicable 
the results in Section \ref{section-adjunction-units-and-fm-transforms}
on the adjunctions units/co-units for Fourier--Mukai transforms.

\subsection{Orthogonal objects}

Our first goal is to come up with a categorification of the 
notion of a subscheme $W$ of $X$ fibred over $Z$. 
Our motivation is the following geometric example: 

\begin{exmpl}
Let $W$ be a flat fibration in $X$ over $Z$ with proper
fibres. By this we mean a scheme $W$ equipped with a morphism 
$\xi\colon W \hookrightarrow X$ which is a closed immersion 
and a morphism $\pi\colon W \rightarrow Z$ which is flat and proper. 
Denote by $\iota_W$ the map 
$W \hookrightarrow Z \times X$ given by the product 
of $\pi$ and $\xi$. We set $E$ to be the structure sheaf 
of the graph of $W$ in $Z \times X$, that is - 
the object $\iota_{W *} \mathcal{O}_W$ in $D(Z \times X)$. 
\end{exmpl}

An arbitrary subscheme $W'$ of $Z \times X$ is a graph 
of some subscheme $W$ of $X$ fibred over $Z$ if and only
if the fibres of $W'$ over closed points of $Z$ are disjoint 
as subschemes of $X$. In derived categories the notion of disjointness 
corresponds to the notion of orthogonality, that is, 
to the vanishing of all the $\ext$'s between them. 
This suggests the following as a categorification of 
the notion of a subscheme of $X$
fibred over $Z$:
\begin{defn}
\label{defn-orthogonal-objects}
Let $E$ be a perfect object of $D(Z \times X)$. We say that $E$ is \em
orthogonal over $Z$ \rm if for any two distinct
points $p$ and $q$ in $Z$ the fibres $E_p$ and $E_q$ are orthogonal
in $D(X)$. Or in other words
\begin{align}
\homm^i_{D(X)}(E_p, E_q) = 0 \quad \quad \text{for all } i \in
\mathbb{Z}.
\end{align}
\end{defn}

Since $E$ is a perfect object we have $(E^\vee)_p = (E_p)^\vee$.
So if $E$ is orthogonal over $Z$, then its dual $E^\vee$ is also 
orthogonal over $Z$.

Any object whose support in $Z \times X$ is the graph of a subscheme
of $X$ fibred over $Z$ is immediately orthogonal over $Z$ --- as 
all the $\ext$'s between two objects with disjoint supports must vanish. 
Another class of examples comes from Fourier--Mukai equivalences:
\begin{exmpl}
\label{exmpl-the-case-of-Fourier--Mukai-equivalences}
The kernel $F$ of any fully faithful Fourier--Mukai transform
$\Phi_F\colon D(Z) \xrightarrow{\sim} D(X)$ is orthogonal over $Z$, 
since for any $p \in Z$ the fibre $F_p$ is the 
image under $\Phi_F$ of the skyscraper sheaf $\mathcal{O}_p$. 
Moreover, we have also $\Phi^{\mathrm radj}_F \Phi_F(\mathcal{O}_p) \simeq
\pi_{Z *} \rder\shhomm_{Z \times X}(F, \pi^!_X(F_p))$. 
The adjunction unit 
$\mathcal{O}_p \rightarrow \Phi^{\mathrm radj}_F \Phi_F(\mathcal{O}_p)$ 
is an isomorphism as $\Phi_F$ is fully faithful.  Applying 
$\pi_{k *}$, where $\pi_k$ is the structure morphism 
$Z \rightarrow \spec k$, to this adjunction unit we obtain
$\rder\homm_{X}(\pi_{X *} F, F_p) = k$. It is possible using the 
same techniques as in the proof of Proposition 
\ref{prps-tfae-left-cotwist-is-an-equivalence} below to show 
that the converse is also true, i.e. $\Phi_F$ is fully faithful if 
and only if $F$ is orthogonal over $Z$ and 
$\rder\homm_{X}(\pi_{X *} F, F_p) = k$ for all $p \in Z$. 
Suppose now that $\Phi_F$ is further an
equivalence, then all its adjunction units and co-units are isomorphisms. 
By Lemma 
$\ref{lemma-morphism-of-FM-kernels-induces-iso-of-FMs-iff-itis-an-iso-itself}$
the morphisms of Fourier--Mukai kernels which induce them are 
also isomorphisms. In particular, the isomorphism of functors
\begin{align}
\Phi^{\mathrm radj}_F \xrightarrow{\text{adj. unit}} 
\Phi^{\mathrm radj}_F \Phi_F \Phi^{\mathrm ladj}_F
\xrightarrow{\text{inverse of adj. co-unit}}
\Phi^{\mathrm ladj}_F 
\end{align}
must come from an isomorphism 
$F^\vee \otimes \pi^!_X(\mathcal{O}_X) \rightarrow 
F^\vee \otimes \pi^!_{Z}(\mathcal{O}_{Z})$ of their 
Fourier--Mukai kernels. Conversely, any isomorphism 
$F^\vee \otimes \pi^!_X(\mathcal{O}_X) \rightarrow 
F^\vee \otimes \pi^!_{Z}(\mathcal{O}_{Z})$ 
induces an isomorphism $\Phi^{\mathrm radj}_F \xrightarrow{\sim} \Phi^{\mathrm ladj}_F$. 
On the other hand, when $X$ is connected
by \cite[Theorem 3.3]{Bridg97}
$\Phi_F$ being fully faithful and 
$\Phi^{\mathrm radj}_F$ being isomorphic to $\Phi^{\mathrm ladj}_F$ imply together 
that $\Phi_F$ is an equivalence.
We conclude that when $X$ is connected the kernels of Fourier--Mukai 
equivalences are precisely the objects which are orthogonal 
over $Z$ and for which 
\begin{align}
\rder\homm_{X}(\pi_{X *} F, F_p) = k \text{ for all } p \in Z \\
\label{eqn-twist-by-dualizing-sheaves-condition-for-equivalences}
F^\vee \otimes \pi^!_X(\mathcal{O}_X) \simeq 
F^\vee \otimes \pi^!_{Z}(\mathcal{O}_{Z}).
\end{align}
\end{exmpl}

Our goal is to show that the orthogonal objects
which are one step up from that, in the sense that  
$\rder\homm_{X}(\pi_{X *} F, F_p) = k \oplus k[d]$ for some $d \in \mathbb{Z}$
and a similar condition to 
\eqref{eqn-twist-by-dualizing-sheaves-condition-for-equivalences} 
holds, are kernels of spherical Fourier--Mukai transforms.  

\subsection{Spherical objects}

\begin{defn}
\label{defn-spherical-objects}
Let $E$ be a perfect object of $D(Z \times X)$. We say that $E$ is
\em spherical over $Z$ \rm if the Fourier--Mukai transform
$\Phi_E\colon D(Z) \rightarrow D(X)$ is a spherical functor, 
cf. Defn.  \ref{defn-spherical-functor} and Cor. 
\ref{cor-spherical-functors-for-dummies}. In other words, if:
\begin{enumerate}
\item The co-twist $F_E$ is an autoequivalence of $D(Z)$,  
\item The natural map $\Phi^{\mathrm radj}_E 
\xrightarrow{ \eqref{eqn-sphericity-condition-natural-isomorphism-cotwist} }
F_E \Phi^{\mathrm ladj}_E[1]$
is an isomorphism of functors. 
\end{enumerate}
If $E$ is also orthogonal over $Z$ we say further that 
$E$ is \em orthogonally spherical over \rm $Z$. 
\end{defn}

\begin{exmpl}
\label{exmpl-the-case-where-base-is-a-single-point} 

The spherical objects introduced by Seidel and Thomas in 
\cite{SeidelThomas-BraidGroupActionsOnDerivedCategoriesOfCoherentSheaves}
can be thought of as the objects spherical over $\spec k$. 
Indeed let $Z = \spec k$ and let $X$ be a smooth variety over $k$. 
Then $\pi_X$ is an isomorphism which identifies $\spec k \times X$
with $X$. Under this identification $\pi^!_X(\mathcal{O}_X)$ becomes simply
$\mathcal{O}_X$ and $\pi^!_{k}(k)$ becomes 
the dualizing complex $D_{X/k}$ which is isomorphic to $\omega_X[\dim X]$
since $X$ is smooth. Therefore the Fourier--Mukai kernel of the right adjoint
$\Phi^{\mathrm{radj}}_E$ is $E^\vee$ and the Fourier--Mukai kernel of 
the left adjoint $\Phi^{\mathrm{ladj}}_E$ is $E^\vee \otimes 
\omega_X[\dim X]$. The triple product $\spec k \times X \times \spec k$ is
identified with $X$ by the projection $\pi_2$ and 
under this identification the projection $\pi_{13}$ 
becomes the map $\pi_k \colon X \rightarrow \spec k$. Therefore 
the Fourier--Mukai kernel of the composition 
$\Phi^{\mathrm{radj}}_E \Phi_E$ is 
$$\pi_{k *} (E^\vee \otimes E) \simeq \pi_{k *} \rder\shhomm_{X}(E,E) \simeq 
\rder\homm_{X}(E,E)$$ 
and by the results of Section
\ref{section-adjunction-units-and-fm-transforms} the adjunction 
unit $\id_{D(\vectspaces)} \rightarrow \Phi^{\mathrm{radj}}_E \Phi_E$
comes from the natural morphism $k \rightarrow \rder\homm_{X}(E,E)$ of
Fourier--Mukai kernels which sends $1$ to the identity
automorphism of $E$. Denote this morphism by $\gamma$. 

The first condition 
for $\Phi_E$ to be a spherical functor is for the co-twist $F_E$
to be an autoequivalence of $D(\vectspaces)$. The only autoequivalences
of $D(\vectspaces)$ are the shifts $(-)[d]$ by some $d \in \mathbb{Z}$
and their Fourier--Mukai kernels are $k[d]$. 
The Fourier--Mukai kernel of $F_E$ is the shift 
by $1$ to the left of the cone of 
$k \xrightarrow{\gamma} \rder\homm_{X}(E,E)$. If $E$ is non-zero
the morphism $\gamma$ is non-zero and then $F_E$ is an autoequivalence 
if and only if $\rder\homm_{X}(E,E)$ is $k \oplus k[d]$ 
for some $d \in \mathbb{Z}$. If this does hold then $F_E=(-)[d-1]$. 
If $E$ is $0$, then the cone of $\gamma$ is $k$ 
and therefore $F_E$ is the identity functor $\id_{D(\vectspaces)}$.  
Note that $E$ is trivially isomorphic to its
single fibre over the single closed point of $\spec k$. Hence we've
shown that $F_E$ is an autoequivalence
if and only if for every closed point $p \in Z$ such that the fibre 
$E_p$ is non-zero we have 
$\rder\homm_X(\pi_{X *} E, E_p) = k \oplus k[d]$ for some $d \in \mathbb{Z}$. 

By Lemma \ref{lemma-sphericity-condition-the-commutation-condition}
the second condition for $\Phi_E$ to be spherical 
is an isomorphism 
$\alpha\colon E^\vee \xrightarrow{\sim} E^\vee \otimes \omega_X[\dim X + d]$
which makes the diagram
$\eqref{eqn-sphericity-condition-the-commutation-condition}$
commute. If $E$ is $0$ then this condition is trivially
satisfied, so assume otherwise. Since $E^\vee$ and 
$E^\vee \otimes \omega_X$ are bounded 
complexes with non-zero cohomologies in exactly the same degrees, the
isomorphism $\alpha$ 
is only possible when $d = - \dim X$. On the other hand, the diagram 
$\eqref{eqn-sphericity-condition-the-commutation-condition}$
on the level of the corresponding Fourier--Mukai kernels is just
\begin{align}
\xymatrix{
k \oplus k[d] \ar_<<<<{\alpha'}[d] \ar[r]^<<<<<{0 \oplus \id} & k[d]  \\
k \oplus k[d] \ar[ur]_{0 \oplus \id} & 
}
\end{align}
where $\alpha'$ is the isomorphism induced by $\alpha$. 
The diagram commutes if $\alpha'$ restricts to the identity morphism
on the component $k[d]$ and we can achieve that by 
multiplying any given $\alpha$ by an appropriate scalar in $k$. 

We conclude that $E$ is spherical over $\spec k$
if and only if either $E$ is $0$ or if 
$\rder\homm_{X}(E,E) = k \oplus k[ -\dim X]$ 
and $E \simeq E \otimes \omega_X$, which is precisely the definition
given in
\cite{SeidelThomas-BraidGroupActionsOnDerivedCategoriesOfCoherentSheaves}.
And since the base $\spec k$ is a single point, 
any object spherical over $\spec k$ is orthogonally spherical. 
\end{exmpl}

\subsection{A cohomological criterion for sphericity}

We now introduce the object in the derived category $D(Z)$ of the
base $Z$ which is relative case version of 
the cone of the natural morphism $k \rightarrow \rder\homm_X(E, E)$
of the Example $\ref{exmpl-the-case-where-base-is-a-single-point}$
where the base $Z$ is just the single point $\spec k$:

\begin{defn}
\label{defn-object-L_E-for-abstract-objects-of-D-ZxX}
For any perfect object $E$ of $D(Z \times X)$ denote by
$\mathcal{L}_E$ the object of $D(Z)$ which is the cone of 
the following composition of morphisms:
\begin{align} \label{eqn-shhom-pipiE-E-map}
\mathcal{O}_{Z} \rightarrow \pi_{Z *} \mathcal{O}_{Z \times X} 
\rightarrow \pi_{Z *}\rder\shhomm_{Z \times X}(E, E)
\rightarrow \pi_{Z *}\rder\shhomm_{Z \times X}(\pi^*_X \pi_{X *} E, E).
\end{align}
Here the first morphism is induced by the adjunction unit $\id_{D(Z)} 
\rightarrow \pi_{Z *} \pi^{*}_{Z}$, 
the second by the adjunction unit $\id_{D(Z \times X)}
\rightarrow \rder\shhomm(E, E\otimes -)$ 
and the third by the adjunction co-unit
$\pi^*_X \pi_{X *} \rightarrow \id_{D(Z \times X)}$.  
\end{defn}

Let $p$ be any closed point of the base $Z$. Apply the pullback
functor $\iota^*_p$ to the composition 
$\eqref{eqn-shhom-pipiE-E-map}$
to obtain a morphism 
$k \rightarrow 
\iota^*_p \pi_{Z *}\rder\shhomm_{Z \times X}(\pi^*_X \pi_{X *} E, E)$. 
We have a sequence of natural isomorphisms:
\begin{align} 
\label{eqn-pullback-of-shhom-pipiE-E-base-change}
\iota_{p}^{*} \pi_{Z *} \rder\shhomm_{Z \times X}(\pi^*_X \pi_{X *} E, E) 
\xrightarrow{\text{ base change iso. around }
\eqref{eqn-inclusion-of-fibre-over-p}} 
\pi_{k*} \iota_{Xp}^{*} \rder\shhomm_{Z \times X}(\pi^*_X \pi_{X *} E, E)  \\
\label{eqn-pullback-of-shhom-pipiE-E-pullback-commutes-with-rhom}
\pi_{k*} \iota_{Xp}^{*} \rder\shhomm_{Z \times X}(\pi^*_X \pi_{X *} E, E)  
\xrightarrow{\text{\cite{IllusieGeneralitesSurLesConditionsDeFinitudeDansLesCategoriesDerivees},
Prps. 7.1.2}}
\pi_{k*}\rder\shhomm_{X}(\iota_{Xp}^{*}\pi^*_X \pi_{X *} E, \iota_{Xp}^{*}E)  \\
\label{eqn-pullback-of-shhom-pipiE-E-rewrite}
\pi_{k*}\rder\shhomm_X(\iota_{Xp}^{*}\pi^*_X \pi_{X *} E, \iota_{Xp}^{*}E) 
\xrightarrow{\pi_{X} \circ \iota_{Xp} = \id_{X}}
\pi_{k*} \rder\shhomm_X(\pi_{X *} E, E_{p}) \simeq 
\rder\homm_{X}(\pi_{X *} E, E_p).
\end{align}
One can check that these natural isomorphisms
identify $\iota^*_p \; \eqref{eqn-shhom-pipiE-E-map}$ with the morphism 
\begin{align}
\label{eqn-plback-shhom-pipiE-E-map}
k \rightarrow \rder\homm_{D(X)}(\pi_{X *} E, E_p)
\end{align}
which sends $1$ to the natural composition 
\begin{align}
\label{eqn-nat-morphism-pi_X*-E-to-E_p} 
\iota^*_{X_p} 
\left( \pi^*_{X} \pi_{X *} E \xrightarrow{\text{adj. co-unit}} 
E \right)
\end{align}
where we identify the LHS with $\pi_{X *} E$ via the scheme map identity 
$\pi_X \circ \iota_{X_p} = \id_X$. Thus pointwise restriction of 
\eqref{eqn-shhom-pipiE-E-map} gives a natural morphism
$k \rightarrow \rder\homm_{D(X)}(\pi_{X *} E, E_p)$ 
for each closed point $p \in Z$. It turns out
that for an orthogonal $E$ the criterion for the co-twist $F_E$
of $E$ to be an autoequivalence of $D(Z)$ is for 
the cone of each of these morphisms to be $k[d]$ for some $d \in
\mathbb{Z}$:

\begin{prps}\label{prps-tfae-left-cotwist-is-an-equivalence}
Let $E$ be a perfect object of $D(Z \times X)$ orthogonal over $Z$. 
The following are equivalent:
\begin{enumerate}
\item \label{item-sphericity-condition-on-exts-for-fibres}
For every closed point $p \in Z$ such that the fibre $E_p$
is non-zero  
$$ \rder\homm_{D(X)}(\pi_{X *} E, E_p) = k \oplus k[d_p]
\quad\text{ for some } d_p \in \mathbb{Z}$$
and the natural morphism $\pi_{X *} E
\xrightarrow{ \eqref{eqn-nat-morphism-pi_X*-E-to-E_p} } E_p$
is not zero. 
\item \label{item-invertibility-of-L_E}
The object $\mathcal{L}_E$ is an invertible object of $D(Z)$. That is - 
on every connected component of $Z$ it is isomorphic to a shift of a line
bundle, cf. 
\cite[\S1.5]{AvramovIyengarLipman-ReflexivityAndRigidityForComplexesIISchemes}. 
\item \label{item-the-left-co-twist-is-an-equivalence}
The co-twist $F_E$ of the transform $\Phi_E\colon 
D(Z) \rightarrow D(X)$ is an autoequivalence of $D(Z)$. 
\end{enumerate}

When the conditions above are satisfied:
\begin{itemize}
\item Locally around any closed point $p \in Z$ we have
$\mathcal{L}_E \simeq \mathcal{O}_Z[d_p]$ where
\begin{align*} d_p = 
\begin{cases}
\text{ the same integer as in } 
(\ref{item-sphericity-condition-on-exts-for-fibres}) &\text{if } E_p \neq 0 \\
1 &\text{ if } E_p = 0
\end{cases}
\end{align*}
\item $F_E$ is isomorphic the functor $\mathcal{L}_E \otimes (\text{-}) [-1]$
\end{itemize}
\end{prps}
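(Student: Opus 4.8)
The plan is to reduce everything to the pointwise picture supplied by $\eqref{eqn-plback-shhom-pipiE-E-map}$: writing $\gamma_p\colon k\to\rder\homm_X(\pi_{X*}E,E_p)$ for the restriction of $\eqref{eqn-shhom-pipiE-E-map}$ to a closed point $p$, we already know from the discussion preceding the statement that $\iota^*_p\mathcal{L}_E\simeq\mathrm{cone}(\gamma_p)$. First I would record two routine facts. One: $\mathcal{L}_E$ is perfect on $Z$, being the cone of a morphism whose target $\pi_{Z*}\rder\shhomm_{Z\times X}(\pi^*_X\pi_{X*}E,E)$ is perfect, since $E$ is perfect, $\pi_{X*}E$ is perfect on $X$ by properness of $\Supp E$ over $X$ and \S\ref{section-on-duality-theories}, and $\pi_{Z*}$ preserves perfection under the standing properness hypotheses. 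Two: a perfect $P\in D(Z)$ is invertible iff $\iota^*_pP\simeq k[m_p]$ for some $m_p$ at every closed point $p$, in which case $m_p$ is locally constant and $P\simeq\mathcal{O}_Z[m_p]$ near $p$ (represent $P$ locally by a minimal complex of finite free modules). Granting these, the equivalence $(\ref{item-sphericity-condition-on-exts-for-fibres})\Leftrightarrow(\ref{item-invertibility-of-L_E})$ and the first concluding bullet reduce to the statement that $\mathrm{cone}(\gamma_p)\simeq k[d_p]$ exactly when $E_p=0$ (with $d_p=1$) or $\rder\homm_X(\pi_{X*}E,E_p)\simeq k\oplus k[d_p]$, while the equivalence with $(\ref{item-the-left-co-twist-is-an-equivalence})$ and the formula $F_E\simeq\mathcal{L}_E\otimes(-)[-1]$ reduce to identifying the Fourier--Mukai kernel $Q_E$ of $F_E$ with $\Delta_*(\mathcal{L}_E[-1])$.

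For the pointwise statement: if $E_p=0$ then $\rder\homm_X(\pi_{X*}E,E_p)=0$ and $\mathrm{cone}(\gamma_p)=k[1]$, so this case is immediate. Suppose $E_p\neq0$; the key point is $\gamma_p(1)\neq0$, and this is exactly where orthogonality enters. By Lemma \ref{lemma-support-of-Q-is-contained-within-the-diagonal} we have $\Supp Q_E\subseteq\Delta$, so from the triangle of kernels $Q_E\to\mathcal{O}_\Delta\to R$ of Proposition \ref{prps-right-adjunction-unit-morphism} (with $R$ the kernel of $\Phi^{\mathrm radj}_E\Phi_E$) we get $\Supp R\subseteq\Delta$, hence $G_p:=\Phi^{\mathrm radj}_E\Phi_E(\mathcal{O}_p)$ is supported at $p$. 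Unwinding $\eqref{eqn-plback-shhom-pipiE-E-map}$, $\gamma_p(1)$ is $\Phi_E$ applied to the surjection $\mathcal{O}_Z\twoheadrightarrow\mathcal{O}_p$, so under $\homm_{D(X)}(\pi_{X*}E,E_p)\cong\homm_{D(Z)}(\mathcal{O}_Z,G_p)$ it corresponds to $\eta_{\mathcal{O}_p}\circ(\mathcal{O}_Z\twoheadrightarrow\mathcal{O}_p)$, where $\eta_{\mathcal{O}_p}\in\homm_{D(Z)}(\mathcal{O}_p,G_p)$ is the adjunction unit and corresponds to $\id_{E_p}$, hence is non-zero. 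Since $G_p$ is supported at $p$, restriction along $\mathcal{O}_Z\twoheadrightarrow\mathcal{O}_p$ identifies $\homm_{D(Z)}(\mathcal{O}_p,G_p)$ with the socle of $H^0(G_p)_p$ inside $H^0(G_p)_p=\homm_{D(Z)}(\mathcal{O}_Z,G_p)$, so the image of $\eta_{\mathcal{O}_p}$ is non-zero and $\gamma_p(1)\neq0$. Given $\gamma_p(1)\neq0$, a short analysis of the triangle $k\xrightarrow{\gamma_p}\rder\homm_X(\pi_{X*}E,E_p)\to\mathrm{cone}(\gamma_p)$ shows that $\gamma_p$ splits off a copy of $k$ in degree $0$, with complement $\mathrm{cone}(\gamma_p)$; hence $\mathrm{cone}(\gamma_p)\simeq k[d_p]$ iff $\rder\homm_X(\pi_{X*}E,E_p)\simeq k\oplus k[d_p]$. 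This settles $(\ref{item-sphericity-condition-on-exts-for-fibres})\Leftrightarrow(\ref{item-invertibility-of-L_E})$ and the first bullet.

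For $Q_E\simeq\Delta_*(\mathcal{L}_E[-1])$ I would use the explicit morphism of kernels $\eqref{eqn-pi1-unit-in-fm-right-adjunction-unit-morphism}$--$\eqref{eqn-delta-unit-in-fm-right-adjunction-unit-morphism}$ representing $\mathcal{O}_\Delta\to R$. Its first two terms $\eqref{eqn-pi1-unit-in-fm-right-adjunction-unit-morphism}$ and $\eqref{eqn-E-E-dual-unit-in-fm-right-adjunction-unit-morphism}$ are, on inspection, literally $\Delta_*$ of the first two morphisms of $\eqref{eqn-shhom-pipiE-E-map}$ (the units $\id\to\pi_{Z*}\pi^*_Z$ and $\id\to\rder\shhomm(E,E\otimes-)$); the remaining terms $\eqref{eqn-rearranging-isos-in-fm-right-adjunction-unit-morphism}$--$\eqref{eqn-delta-unit-in-fm-right-adjunction-unit-morphism}$ are then matched, using $\Supp R\subseteq\Delta$ together with a base change and Grothendieck--Verdier duality computation, with $\Delta_*$ of the third morphism of $\eqref{eqn-shhom-pipiE-E-map}$ (the counit $\pi^*_X\pi_{X*}\to\id$). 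Concretely this amounts to checking that $\Phi^{\mathrm radj}_E\Phi_E(\mathcal{O}_Z)\simeq\pi_{Z*}\rder\shhomm_{Z\times X}(\pi^*_X\pi_{X*}E,E)$ compatibly with the adjunction units out of $\mathcal{O}_Z$ --- an isomorphism that fails in general but holds with orthogonality, since orthogonality annihilates the part of $R$ off the diagonal. Granting this, $\mathcal{O}_\Delta\to R$ is isomorphic to $\Delta_*$ applied to $\eqref{eqn-shhom-pipiE-E-map}$, so $Q_E=\mathrm{cone}(\mathcal{O}_\Delta\to R)[-1]$ is isomorphic to $\Delta_*$ of the shift by $[-1]$ of the cone of $\eqref{eqn-shhom-pipiE-E-map}$, namely $\Delta_*(\mathcal{L}_E[-1])$. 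As $\Theta_{\Delta_*N}=(-)\otimes N$, this gives $F_E\simeq(-)\otimes\mathcal{L}_E[-1]$, an autoequivalence precisely when $\mathcal{L}_E$ is invertible; hence $(\ref{item-invertibility-of-L_E})\Leftrightarrow(\ref{item-the-left-co-twist-is-an-equivalence})$ and the second concluding bullet.

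The main obstacle is the identification of the last paragraph: rewriting the terms $\eqref{eqn-rearranging-isos-in-fm-right-adjunction-unit-morphism}$--$\eqref{eqn-delta-unit-in-fm-right-adjunction-unit-morphism}$ of the kernel morphism as $\Delta_*$ of the counit-based morphism in $\eqref{eqn-shhom-pipiE-E-map}$. None of the individual ingredients is deep --- it is the base change, projection formula and duality package of \S\ref{section-on-duality-theories} and \S\ref{section-adjunction-units-and-fm-transforms}, together with the input from Lemma \ref{lemma-support-of-Q-is-contained-within-the-diagonal} that orthogonality forces $R$ onto the diagonal --- but organising these identifications while keeping track of the relative dualizing twists and of the single overall shift is where the real work lies. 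Once that comparison is in place, the pointwise argument of the second paragraph, including the orthogonality input that makes $\gamma_p(1)\neq0$, is essentially formal.
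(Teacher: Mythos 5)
Your outline for $(\ref{item-sphericity-condition-on-exts-for-fibres}) \Leftrightarrow (\ref{item-invertibility-of-L_E})$ is broadly in the spirit of the paper's proof (both hinge on $\iota^*_p\mathcal{L}_E \simeq \mathrm{cone}(\gamma_p)$ and on showing $\gamma_p(1)\neq 0$ whenever $E_p\neq 0$, which the paper does by showing $\pi_{k*}$ is faithful on $D_p(Z)$), but your route to $(\ref{item-invertibility-of-L_E}) \Leftrightarrow (\ref{item-the-left-co-twist-is-an-equivalence})$ has a genuine gap which you yourself flag and which I do not think can be filled by the argument you sketch.

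The problematic claim is that orthogonality forces the kernel $Q'$ of $F_E$ to be $\Delta_*(\mathcal{L}_E[-1])$ \emph{unconditionally}, so that $F_E\simeq(-)\otimes\mathcal{L}_E[-1]$ whenever $E$ is orthogonal. What orthogonality actually gives you, via Lemma \ref{lemma-support-of-Q-is-contained-within-the-diagonal}, is only that the set-theoretic support of the kernel $Q$ of $\Phi^{\mathrm radj}_E\Phi_E$ (and hence of $Q'$) lies inside $\Delta$. This is strictly weaker than being a pushforward $\Delta_*(-)$: for instance on $Z=\mathbb{A}^1$ the sheaf $\mathcal{O}_{Z\times Z}/(x-y)^2$ is supported on the diagonal but is not $\Delta_*$ of anything. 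Concretely, if some $F_E(\mathcal{O}_p)$ were an object of $D_p(Z)$ that is \emph{not} a shift of $\mathcal{O}_p$ (say, involves a length-two point), then $Q'$ would contain pieces like $\mathcal{O}_{2\Delta}$ and the identity $Q'\simeq\Delta_*(\tilde\pi_{1*}Q')$ simply fails, even though $\tilde\pi_{1*}Q'\simeq\mathcal{L}_E[-1]$ still holds by Lemma \ref{lemma-pullback-of-L-E-morphism-is-iso-to-pushdown-of-the-adjunction}. So your ``base change and Grothendieck--Verdier duality computation'' step, which would amount to $Q'\simeq\Delta_*(\tilde\pi_{1*}Q')$, is not a formal consequence of $\Supp Q'\subseteq\Delta$, and in fact is false in general for orthogonal $E$ with $\mathcal{L}_E$ not invertible. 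The concluding bullet $F_E\simeq\mathcal{L}_E\otimes(-)[-1]$ is a \emph{conditional} statement in the proposition precisely for this reason.

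The paper's proof avoids this by going through the points first: using faithfulness of $\pi_{k*}$ on $D_p(Z)$ and the support lemma it shows that $\mathcal{L}_E$ is invertible iff $F_E(\mathcal{O}_p)\simeq\mathcal{O}_p[d]$ for every $p$. Only \emph{after} that does it conclude $Q'=\Delta_*\mathcal{L}'$, and this requires a genuine extra input: semicontinuity to make the shift $d$ locally constant, and then the flatness/spectral-sequence argument of \cite{Bridg97}, Lemma 4.3, which produces from $\iota^*_{p,Z}Q'\simeq\mathcal{O}_p[d]$ a coherent sheaf flat over $Z$ whose fibres are all $\mathcal{O}_p$. The converse direction uses Bondal--Orlov's spectral-sequence argument to recover $F_E(\mathcal{O}_p)\simeq\mathcal{O}_p[d]$ from $F_E$ being an equivalence. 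None of these three ingredients appear in your sketch, and at least one of them is unavoidable.

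One secondary issue: your ``routine fact'' that $\mathcal{L}_E$ is perfect because $\pi_{X*}E$ is perfect on $X$ is not correct. Perfection of $E$ in $D(Z\times X)$ does not propagate to $\pi_{X*}E$; a flat fibration $D\hookrightarrow X$ with $X$ singular gives $\pi_{X*}E = \xi_*\mathcal{O}_D$, typically not perfect. A priori $\mathcal{L}_E$ only lies in $D^+_{\mathrm{coh}}(Z)$, and the paper sidesteps this by invoking the invertibility criterion of \cite{AvramovIyengarLipman-ReflexivityAndRigidityForComplexesIISchemes}, Theorem 1.5.2, rather than reducing to a perfect-complex argument. This gap is less serious than the one above, but should not be waved away as ``routine''.
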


We see therefore that for the orthogonally spherical objects 
the geometric meaning of the object $\mathcal{L}_E$ defined 
above is that its restriction to each connected component 
of $Z$ is a (shifted) line bundle which 
induces the co-twist autoequivalence $F_E$ of $D(Z)$. 

To prove Proposition \ref{prps-tfae-left-cotwist-is-an-equivalence}
we need two technical lemmas. Recall that by Proposition
\ref{prps-right-adjunction-unit-morphism} the adjunction unit
$\id_{D(Z)} \rightarrow \Phi^{\mathrm radj}_E \Phi_E$ is
isomorphic to the morphism of Fourier--Mukai transforms induced
by the morphism  
\begin{align}
\Delta_* \mathcal{O}_{Z} 
\xrightarrow{\eqref{eqn-pi1-unit-in-fm-right-adjunction-unit-morphism}-
\eqref{eqn-delta-unit-in-fm-right-adjunction-unit-morphism}}
Q = \pi_{13 *} \left( \pi^*_{12} E \otimes \pi^*_{23} E^\vee \otimes
\pi^*_{23} \pi^!_X \mathcal{O}_X \right)
\end{align}
in $D(Z \times Z)$. Here $\pi_{ij}$ are 
the natural projection morphisms in the following commutative diagram:
\begin{align} \label{eqn-big-projection-tree-for-x1-x}
\xymatrix{
& & Z \times X \times Z \ar[ld]_{\pi_{12}} \ar[d]^{\pi_{13}} \ar[rd]^{\pi_{23}} & & \\
& Z \times X \ar[ld]_{\pi_{Z}} \ar[rd]^>>>>>>>{\pi_X} & Z \times Z
\ar[lld]_>>>>>>>>>>>>>>{\tilde{\pi}_1}
\ar[rrd]^>>>>>>>>>>>>>>{\tilde{\pi}_2} & X \times Z 
\ar[ld]_>>>>>>>{\pi_X} \ar[rd]^{\pi_{Z}} & \\
Z & & X & & Z 
}
\end{align}

\begin{lemma}
\label{lemma-pullback-of-L-E-morphism-is-iso-to-pushdown-of-the-adjunction}
Let $E$ be a perfect object of $D(Z \times X)$ and let $p \in Z$ 
be a closed point. Then the following two morphisms in $D(Z)$ 
are isomorphic:
\begin{align} \label{eqn-pushdown-of-adjunction-morphism}
\tilde{\pi}_{1 *} \left(\Delta_* \mathcal{O}_{Z}
\xrightarrow{\eqref{eqn-pi1-unit-in-fm-right-adjunction-unit-morphism}-
\eqref{eqn-delta-unit-in-fm-right-adjunction-unit-morphism}}
Q\right)
\end{align}
and 
\begin{align}
\mathcal{O}_{Z}
\xrightarrow{\eqref{eqn-shhom-pipiE-E-map}}
\pi_{Z *}\rder\shhomm(\pi^*_X \pi_{X *} E, E).
\end{align}
Consequently, for every closed point $p \in Z$
the natural morphism $k \xrightarrow{\eqref{eqn-plback-shhom-pipiE-E-map}} 
\rder\homm_X(\pi_{X*} E, E_p)$
is isomorphic to $\pi_{k *} \left(\mathcal{O}_p \xrightarrow{\text{adj.  unit}}
\Phi^{\mathrm radj}_E \Phi_E (\mathcal{O}_p) \right)$
and therefore $\pi_{k *} F_E(\mathcal{O}_p) \simeq \iota^*_p
\mathcal{L}_E[-1]$. 
\end{lemma}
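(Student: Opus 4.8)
The plan is to establish the asserted isomorphism of morphisms in $D(Z)$ by applying $\tilde{\pi}_{1*}$ termwise to the explicit four-step presentation of the adjunction unit $\id_{D(Z)}\to\Phi^{\mathrm radj}_E\Phi_E$ furnished by Proposition~\ref{prps-right-adjunction-unit-morphism} (taken with $X_1=Z$, $X_2=X$), and then to read off the pointwise statements by evaluating the corresponding Fourier--Mukai transforms at skyscraper sheaves.

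First I would identify the objects. From \eqref{eqn-big-projection-tree-for-x1-x} one has $\tilde{\pi}_1\circ\Delta=\id_Z$, $\tilde{\pi}_1\circ\pi_{13}=\pi_Z\circ\pi_{12}$ and $\pi_{12}\circ\Delta=\id_{Z\times X}$. Pushing \eqref{eqn-pi1-unit-in-fm-right-adjunction-unit-morphism}--\eqref{eqn-E-E-dual-unit-in-fm-right-adjunction-unit-morphism} down by $\tilde{\pi}_{1*}$ collapses $\Delta_*$ and produces exactly the first two morphisms of \eqref{eqn-shhom-pipiE-E-map}, i.e. $\mathcal{O}_Z\to\pi_{Z*}\mathcal{O}_{Z\times X}\to\pi_{Z*}\rder\shhomm_{Z\times X}(E,E)$. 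For the target, $\tilde{\pi}_{1*}Q=\pi_{Z*}\pi_{12*}\bigl(\pi_{12}^*E\otimes\pi_{23}^*(E^\vee\otimes\pi_X^!\mathcal{O}_X)\bigr)$; the projection formula along $\pi_{12}$ and flat base change $\pi_{12*}\pi_{23}^*\simeq\pi_X^*\pi_{X*}$ for the cartesian left-hand square in \eqref{eqn-big-projection-tree-for-x1-x} (using that $\pi_X\colon X\times Z\to X$ is flat) rewrite it as $\pi_{Z*}\bigl(E\otimes\pi_X^*\pi_{X*}(E^\vee\otimes\pi_X^!\mathcal{O}_X)\bigr)$; and Grothendieck duality for $\pi_X\colon X\times Z\to X$ (which is proper on the support of $E$), together with $(E^\vee)^\vee\simeq E$ since $E$ is perfect, gives $\pi_{X*}(E^\vee\otimes\pi_X^!\mathcal{O}_X)\simeq\pi_{X*}\rder\shhomm(E,\pi_X^!\mathcal{O}_X)\simeq(\pi_{X*}E)^\vee$. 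As $\pi_{X*}E$ is perfect (flat proper pushforward of a perfect object, after compactifying in the $Z$ direction), this turns the target into $\pi_{Z*}\rder\shhomm_{Z\times X}(\pi_X^*\pi_{X*}E,E)$, the target of \eqref{eqn-shhom-pipiE-E-map}.

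It then remains to match the residual morphism coming from \eqref{eqn-rearranging-isos-in-fm-right-adjunction-unit-morphism}--\eqref{eqn-delta-unit-in-fm-right-adjunction-unit-morphism}. After $\tilde{\pi}_{1*}$, the factor $\Delta_*\Delta^!$ in \eqref{eqn-delta-unit-in-fm-right-adjunction-unit-morphism} is collapsed by $\pi_{12}\circ\Delta=\id$, and the relative dualizing twist carried by $\Delta^!$ is absorbed using the identity $\pi_{12}^!\pi_1^*\mathcal{O}_{X_1}\simeq\pi_{23}^*\pi_2^!\mathcal{O}_{X_2}$ exactly as in the proof of Proposition~\ref{prps-right-adjunction-unit-morphism}, leaving precisely the morphism induced by the co-unit $\pi_X^*\pi_{X*}\to\id$, which is the third morphism of \eqref{eqn-shhom-pipiE-E-map}. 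The hard part will be this last matching: checking that the base-change and projection-formula isomorphisms invoked above are compatible with the various adjunction units and co-units threading through the chain. This is pure (and tedious) diagram bookkeeping of the kind carried out in \cite{AnnoLogvinenko-OnTakingTwistsOfFourierMukaiFunctors}, and it rests on the standard compatibilities collected in \cite{Lipman-NotesOnDerivedFunctorsAndGrothendieckDuality}; I do not expect any genuinely new difficulty beyond this.

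For the pointwise statements I would first record the elementary fact that for any kernel $K\in D(Z\times Z)$ there is a natural (in $K$) isomorphism $\pi_{k*}\Theta_K(\mathcal{O}_p)\simeq\iota_p^*\tilde{\pi}_{1*}K$: writing $\Theta_K(\mathcal{O}_p)=\tilde{\pi}_{2*}(K\otimes\tilde{\pi}_1^*\mathcal{O}_p)$, using $\pi_k\tilde{\pi}_2=\pi_k\tilde{\pi}_1$, the projection formula along $\tilde{\pi}_1$, and $M\otimes\mathcal{O}_p\simeq\iota_{p*}\iota_p^*M$, this falls out immediately. Applying it to the morphism $\Delta_*\mathcal{O}_Z\to Q$, whose induced morphism of transforms is the adjunction unit $\id_{D(Z)}\to\Phi^{\mathrm radj}_E\Phi_E$, shows that $\pi_{k*}\bigl(\mathcal{O}_p\to\Phi^{\mathrm radj}_E\Phi_E(\mathcal{O}_p)\bigr)$ is isomorphic to $\iota_p^*$ applied to \eqref{eqn-pushdown-of-adjunction-morphism}, hence by the first part to $\iota_p^*$ applied to \eqref{eqn-shhom-pipiE-E-map}, which by the isomorphisms \eqref{eqn-pullback-of-shhom-pipiE-E-base-change}--\eqref{eqn-pullback-of-shhom-pipiE-E-rewrite} and the verification recorded there is the morphism $k\xrightarrow{\eqref{eqn-plback-shhom-pipiE-E-map}}\rder\homm_X(\pi_{X*}E,E_p)$. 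Finally, applying the exact functor $\pi_{k*}$ to the triangle \eqref{eqn-left-cotwist-triangle} evaluated at $\mathcal{O}_p$ and passing to cones, and using that $\iota_p^*$ commutes with cones (so that $\iota_p^*\mathcal{L}_E$, with $\mathcal{L}_E$ the cone of \eqref{eqn-shhom-pipiE-E-map} as in Definition~\ref{defn-object-L_E-for-abstract-objects-of-D-ZxX}, is the cone of $\iota_p^*$ of \eqref{eqn-shhom-pipiE-E-map}, i.e. of \eqref{eqn-plback-shhom-pipiE-E-map}), yields $\pi_{k*}F_E(\mathcal{O}_p)\simeq\iota_p^*\mathcal{L}_E[-1]$, completing the argument.
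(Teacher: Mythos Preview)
Your proposal is correct and follows essentially the same route as the paper: apply $\tilde{\pi}_{1*}$ termwise to the four-step presentation of Proposition~\ref{prps-right-adjunction-unit-morphism}, collapse the $\Delta_*$ via $\tilde{\pi}_1\circ\Delta=\id_Z$ to match the first two morphisms in \eqref{eqn-shhom-pipiE-E-map}, and then work to identify the remaining step with the map induced by $\pi_X^*\pi_{X*}\to\id$; the pointwise consequences are deduced in the same way. The only minor difference is that you identify the target $\tilde{\pi}_{1*}Q$ by projection formula, flat base change $\pi_{12*}\pi_{23}^*\simeq\pi_X^*\pi_{X*}$, and sheafified Grothendieck duality to reach $(\pi_{X*}E)^\vee$, whereas the paper instead passes through the $!$-base change $\pi_{23}^*\pi_X^!\simeq\pi_{12}^!\pi_X^*$ and the relative duality $D_{\bullet/Z\times X}$ before invoking Lemmas~2.3 and~2.4 of \cite{AnnoLogvinenko-OnTakingTwistsOfFourierMukaiFunctors}; both routes require the same compatibility bookkeeping you flag as the ``hard part'', and the paper's explicit citations of those lemmas are exactly what fills the gap you leave.
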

\begin{proof}
For the first claim we need to show that $\mathcal{O}_{Z}
\xrightarrow{\eqref{eqn-shhom-pipiE-E-map}}
\pi_{Z *}\rder\shhomm(\pi^*_X \pi_{X *} E, E)$ 
is isomorphic to:
\begin{align}
\label{eqn-tildepi1-pi-X1-unit-in-fm-right-adjunction-unit-morphism}
\tilde{\pi}_{1 *} \Delta_* (\mathcal{O}_{Z})
\xrightarrow{\id \rightarrow \pi_{Z *} \pi^*_{Z}}
\tilde{\pi}_{1 *} \Delta_* \pi_{Z *} \pi^*_{Z} (\mathcal{O}_{Z})
\\ 
\label{eqn-tildepi1-E-E-dual-unit-in-fm-right-adjunction-unit-morphism}
\tilde{\pi}_{1 *}\Delta_* \pi_{Z *} \pi^*_{Z} (\mathcal{O}_{Z})
\xrightarrow{\id \rightarrow E \otimes E^\vee \otimes }
\tilde{\pi}_{1 *}\Delta_* \pi_{Z *} \left(E \otimes E^\vee \otimes 
\pi^*_{Z} (\mathcal{O}_{Z})\right)
\\
\label{eqn-tildepi1-rearranging-isos-in-fm-right-adjunction-unit-morphism}
\tilde{\pi}_{1 *} \Delta_* \pi_{Z *} \left(E \otimes E^\vee \otimes 
\pi^*_{Z} (\mathcal{O}_{Z})\right)
\quad \simeq \quad
\tilde{\pi}_{1 *} \pi_{13 *} \Delta_* \Delta^! 
\left(\pi_{12}^* E \otimes \pi_{23}^* E^\vee \otimes \pi_{23}^*
\pi^!_X (\mathcal{O}_{X})\right)
\\ 
\label{eqn-tildepi1-delta-unit-in-fm-right-adjunction-unit-morphism}
\tilde{\pi}_{1 *} \pi_{13 *} \Delta_* \Delta^! 
\left(\pi_{12}^* E \otimes \pi_{23}^* E^\vee \otimes \pi_{23}^*
\pi^!_X(\mathcal{O}_{X})\right)
\xrightarrow{\Delta_* \Delta^! \rightarrow \id}
\tilde{\pi}_{1 *} \pi_{13 *} 
\left(\pi_{12}^* E \otimes \pi_{23}^* E^\vee \otimes \pi_{23}^*
\pi^!_X(\mathcal{O}_{X})\right) 
\end{align}

By the scheme map identity $\tilde{\pi}_{1} \circ \Delta = \id_{Z}$ we have
$\tilde{\pi}_{1 *} \Delta_* \simeq \id_{D(Z)}$ and this identifies   
$\eqref{eqn-tildepi1-pi-X1-unit-in-fm-right-adjunction-unit-morphism}$ and 
$\eqref{eqn-tildepi1-E-E-dual-unit-in-fm-right-adjunction-unit-morphism}$
with the first and the second morphisms in
the composition $\eqref{eqn-shhom-pipiE-E-map}$. 
It remains to show that
\begin{align}
\label{eqn-third-morphism-in-shhom-pipiE-E-map}
\pi_{Z *}\rder\shhomm_X(E, E)
\xrightarrow{\pi^*_X \pi_{X *} \rightarrow \id} \pi_{Z *}\rder\shhomm(\pi^*_X \pi_{X *} E, E)
\end{align}
is isomorphic to 
\eqref{eqn-tildepi1-delta-unit-in-fm-right-adjunction-unit-morphism}.

By the scheme map identity $\tilde{\pi}_1 \circ \pi_{13} =
\pi_{Z} \circ \pi_{12}$ from \eqref{eqn-big-projection-tree-for-x1-x} 
we have
$\tilde{\pi}_{1*} \pi_{13 *} \simeq \pi_{Z *} \pi_{12 *}$.
By the independent fibre square
\begin{align}
\xymatrix{
Z \times X \times Z \ar^{\pi_{23}}[r] \ar_{\pi_{12}}[d] &
X \times Z \ar^{\pi_{X}}[d] \\
Z \times X \ar_{\pi_{X}}[r] &
X
}
\end{align}
we also have $\pi_{23}^* \pi^!_X \simeq \pi^{!}_{12} \pi^{*}_{X}$, 
cf.  \cite{Lipman-NotesOnDerivedFunctorsAndGrothendieckDuality}, \S3.10.
We can therefore rewrite
\eqref{eqn-tildepi1-delta-unit-in-fm-right-adjunction-unit-morphism} as
\begin{align}
\label{eqn-delta-unit-in-fm-right-adjunction-unit-morphism-standalone-rewritten}
\pi_{Z *} \pi_{12 *} \Delta_* \Delta^! 
\left(\pi_{12}^* E \otimes \pi_{23}^* E^\vee \otimes \pi_{12}^!
\mathcal{O}_{Z \times X}\right)
\xrightarrow{\Delta_* \Delta^! \rightarrow \id}
\pi_{Z *} \pi_{12 *} 
\left(\pi_{12}^* E \otimes \pi_{23}^* E^\vee \otimes \pi_{12}^!
\mathcal{O}_{Z \times X}\right). 
\end{align}
Now observe that $\pi_{12}^* E$ is perfect, while 
$\pi_{23}^* E^\vee \otimes \pi_{12}^! \mathcal{O}_{Z \times X}$ 
is a tensor product of a perfect object and a $\pi_{12}$-perfect 
object and therefore itself $\pi_{12}$-perfect. Hence, 
even though $\Delta$ is not perfect, by Lemma      
\ref{lemma-natural-map-f^!=f^*-f^!-is-iso-for-perf-and-S-perf}
the natural map
$\Delta^!(\pi_{12}^* E \otimes \pi_{23}^* E^\vee \otimes \pi_{12}^!
\mathcal{O}_{Z \times X}) \rightarrow \Delta^*(\pi_{12}^* E)
\otimes \Delta^!(\pi_{23}^* E^\vee \otimes \pi_{12}^!
\mathcal{O}_{Z \times X})$ is still an isomorphism. 
It therefore follows 
from \cite[Lemma 2.2]{AnnoLogvinenko-OnTakingTwistsOfFourierMukaiFunctors}
that  
\eqref{eqn-delta-unit-in-fm-right-adjunction-unit-morphism-standalone-rewritten}
is isomorphic to 
\begin{align}
\pi_{Z *} \left(E \otimes \pi_{12 *} \Delta_* \Delta^! 
\left(\pi_{23}^* E^\vee \otimes \pi_{12}^!
\mathcal{O}_{Z \times X}\right)\right)
\xrightarrow{\Delta_* \Delta^! \rightarrow \id}
\pi_{Z *} \left( E \otimes \pi_{12 *} 
\left(\pi_{23}^* E^\vee \otimes \pi_{12}^!
\mathcal{O}_{Z \times X}\right)\right). 
\end{align}
It remains to show that 
\begin{align}
\label{eqn-part-third-morphism-in-shhom-pipiE-E-map}
\rder\shhomm_X(E, \mathcal{O}_{Z \times X})
\xrightarrow{\pi^*_X \pi_{X *} \rightarrow \id} 
\rder\shhomm(\pi^*_X \pi_{X *} E, \mathcal{O}_{Z \times X})
\end{align}
is isomorphic to 
\begin{align}
\label{eqn-part-delta-unit-in-fm-right-adjunction-unit}
\pi_{12 *} \Delta_* \Delta^! 
\rder\shhomm\left(\pi_{23}^* E, \pi_{12}^! \mathcal{O}_{Z \times X}\right)
\xrightarrow{\Delta_* \Delta^! \rightarrow \id}
\pi_{12 *}  
\rder\shhomm\left(\pi_{23}^* E, \pi_{12}^! \mathcal{O}_{Z \times X}\right).
\end{align}
Rewriting $\eqref{eqn-part-third-morphism-in-shhom-pipiE-E-map}$
and $\eqref{eqn-part-delta-unit-in-fm-right-adjunction-unit}$
in terms of the relative duality theory $D_{\bullet/Z \times X}$ 
(see Section \ref{section-on-duality-theories}) we obtain  
\begin{align*} 
D_{\bullet/Z \times X} \; \left( \pi_X^* \pi_{X *} E 
\xrightarrow{\pi_X^* \pi_{X *} \rightarrow \id } E \right)^\text{opp}
\quad\quad \text{ and } \quad\quad
D_{\bullet/Z \times X}
\left( 
\pi_{12 *} \pi_{23}^* E
\xrightarrow{\id \rightarrow \Delta_* \Delta^* }
\pi_{12 *} \Delta_* \Delta^* \pi_{23}^* E 
\right)^\text{opp}
\end{align*}
respectively and these are isomorphic by 
\cite{AnnoLogvinenko-OnTakingTwistsOfFourierMukaiFunctors}, Lemma 3.2. 
This settles the first claim of this lemma. 

For the second claim, we have an independent fibre square 
\begin{align} \label{eqn-iota-p-X1-commutative-square}
\xymatrix{
Z \; \ar[d]_{\pi_{k}} \ar@{^{(}->}[r]^>>>>>>{\iota_{p, Z}} &
Z \times Z \ar[d]^{\tilde{\pi}_{1}} \\
\spec k \; \ar@{^{(}->}[r]^>>>>>>>{\iota_{p}} & Z 
}. 
\end{align}
and for any $A \in D(Z \times Z)$ we have a standard 
isomorphism 
\begin{align}
\Phi_A (\mathcal{O}_p) \xrightarrow{\sim} \iota^*_{p,Z} A
\end{align}
which is functorial in $A$. The adjunction unit morphism
$\mathcal{O}_p \rightarrow \Phi^{\mathrm radj}_E \Phi_E (\mathcal{O}_p)$ 
is isomorphic to the morphism $\Phi_{\Delta_* \mathcal{O}_{Z}} (\mathcal{O}_p)
\rightarrow \Phi_Q (\mathcal{O}_p)$ induced by 
$\Delta_* \mathcal{O}_{Z}
\xrightarrow{\eqref{eqn-pi1-unit-in-fm-right-adjunction-unit-morphism}-
\eqref{eqn-delta-unit-in-fm-right-adjunction-unit-morphism}}
Q$ and is therefore isomorphic to
$\iota^*_{p,Z} \left(\Delta_* \mathcal{O}_{Z}
\xrightarrow{\eqref{eqn-pi1-unit-in-fm-right-adjunction-unit-morphism}-
\eqref{eqn-delta-unit-in-fm-right-adjunction-unit-morphism}}
Q\right)$. By the base change around 
$\eqref{eqn-iota-p-X1-commutative-square}$ we have 
$\pi_{k *} \iota^*_{p,Z} \simeq \iota_p^* \tilde{\pi}_{1 *}$ and
therefore
$\pi_{k *}\left(\mathcal{O}_p \rightarrow \Phi^{\mathrm radj}_E \Phi_E
(\mathcal{O}_p)\right)$ is isomorphic to
$\iota_p^* \tilde{\pi}_{1 *} \left(\Delta_* \mathcal{O}_{Z}
\xrightarrow{\eqref{eqn-pi1-unit-in-fm-right-adjunction-unit-morphism}-
\eqref{eqn-delta-unit-in-fm-right-adjunction-unit-morphism}}
Q\right)$ and hence, by the first claim, to $\iota^*_p \left(
\mathcal{O}_{Z} \xrightarrow{\eqref{eqn-shhom-pipiE-E-map}}
\pi_{Z *}\rder\shhomm(\pi^*_X \pi_{X *} E, E) \right)$, 
which is precisely the natural morphism 
$k \xrightarrow{\eqref{eqn-plback-shhom-pipiE-E-map}} 
\rder\homm_X(\pi_{X*} E, E_p)$. This settles the second claim
of the lemma and the last claim follows immediately by taking cones. 
\end{proof}

\begin{lemma} 
\label{lemma-support-of-Q-is-contained-within-the-diagonal}
Let $E$ be a perfect object of $D(Z \times X)$. 
Then $E$ is orthogonal over $Z$ if and only if 
the support of the object $Q = 
\pi_{13 *} \left( \pi^*_{12} E \otimes \pi^*_{23} E^\vee \otimes
\pi^*_{23} \pi^!_X \mathcal{O}_X \right)$ is contained within 
the diagonal $\Delta \subset Z \times Z$. Consequently, if
$E$ is orthogonal over $Z$ then for any closed point 
$p \in Z$ the object $F_E(\mathcal{O}_p)$, if
non-zero, is supported at precisely the point $p$. 
\end{lemma}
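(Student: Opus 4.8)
The plan is to reduce both the claimed equivalence and its consequence to a single pointwise computation: for any two closed points $q,p\in Z$, identifying the derived fibre of $Q$ at the point $(q,p)\in Z\times Z$. Write $\iota_{(q,p)}\colon\spec k\hookrightarrow Z\times Z$ for the inclusion of $(q,p)$. I would prove that
\[
\iota_{(q,p)}^{*}Q\;\simeq\;\rder\homm_{X}(E_p,E_q)\otimes_k V_p,
\]
where $V_p$ is a \emph{nonzero} bounded complex of $k$-vector spaces depending only on $p$. Granting this, $E$ is orthogonal over $Z$ exactly when $\rder\homm_{X}(E_p,E_q)=0$ for all closed $p\neq q$, i.e.\ exactly when $\iota_{(q,p)}^{*}Q=0$ for every closed point of $Z\times Z$ lying off the diagonal. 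Since $Q$ lies in $D^b_{coh}(Z\times Z)$ under our running hypotheses (the tensor product whose pushforward is $Q$ is perfect-times-bounded-coherent, and $\pi_{13}$ is proper on the relevant support), its support is closed, a closed point lies in it iff the derived fibre there is nonzero (take lowest nonvanishing cohomology and apply Nakayama), and a closed subset of the Jacobson scheme $Z\times Z$ is determined by its closed points. Hence $\Supp Q\subseteq\Delta$ is equivalent to orthogonality of $E$ over $Z$, which is the first assertion.

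For the computation, note that $\pi_{13}\colon Z\times X\times Z\to Z\times Z$ is flat, being a base change of $X\to\spec k$; so the fibre square whose vertical maps are $\pi_{13}$ and the structure morphism $\pi_k\colon X\to\spec k$, and whose horizontal maps are $\iota_{(q,p)}$ and $j\colon X\cong\{q\}\times X\times\{p\}\hookrightarrow Z\times X\times Z$, is Tor-independent, and base change gives $\iota_{(q,p)}^{*}Q\simeq\pi_{k*}\,j^{*}\!\bigl(\pi_{12}^{*}E\otimes\pi_{23}^{*}E^{\vee}\otimes\pi_{23}^{*}\pi_X^{!}\mathcal{O}_X\bigr)$. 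After the evident identifications $\pi_{12}\circ j=\iota_{Xq}$ and $\pi_{23}\circ j=\iota_{Xp}$, so the first two factors pull back to $E_q$ and $(E_p)^{\vee}$. For the third factor I would use the independent fibre square relating $\pi_{23}$ and $\pi_X$ (exactly as in the proof of Lemma \ref{lemma-pullback-of-L-E-morphism-is-iso-to-pushdown-of-the-adjunction}), together with flat base change for $(-)^{!}$ along the flat morphism $X\to\spec k$, to identify $\pi_{23}^{*}\pi_X^{!}\mathcal{O}_X$ with $\pi_3^{*}\omega_Z^{\bullet}$, where $\pi_3\colon Z\times X\times Z\to Z$ is the projection onto the last factor and $\omega_Z^{\bullet}:=D_{Z/k}(\mathcal{O}_Z)$ is a dualizing complex of $Z$. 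Since $\pi_3\circ j$ is the constant map to $p$, this gives $j^{*}\pi_{23}^{*}\pi_X^{!}\mathcal{O}_X\simeq\pi_k^{*}(\iota_p^{*}\omega_Z^{\bullet})$. Setting $V_p:=\iota_p^{*}\omega_Z^{\bullet}$ and applying the projection formula for $\pi_k$ yields $\iota_{(q,p)}^{*}Q\simeq\pi_{k*}(E_q\otimes E_p^{\vee})\otimes_k V_p\simeq\rder\homm_{X}(E_p,E_q)\otimes_k V_p$, the last step using that $E_p$ is perfect so $E_p^{\vee}\otimes E_q\simeq\rder\shhomm_{X}(E_p,E_q)$.

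It remains to observe that $V_p\neq 0$: a dualizing complex has support all of $Z$, so its derived restriction to the closed point $p$ is nonzero. Over the field $k$ one has $A\otimes_k B=0$ iff $A=0$ or $B=0$ (Künneth), so $\iota_{(q,p)}^{*}Q=0$ iff $\rder\homm_{X}(E_p,E_q)=0$, completing the first part. I expect the only genuine care needed is in the handling of $\pi_X^{!}\mathcal{O}_X$ and in checking that the fibre squares involved are Tor-independent so that base change and the projection formula are legitimate; the remaining steps are bookkeeping with the projection diagram \eqref{eqn-big-projection-tree-for-x1-x}.

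For the consequence: by the triangle \eqref{eqn-left-cotwist-triangle} and Proposition \ref{prps-right-adjunction-unit-morphism}, the Fourier--Mukai kernel of $F_E$ is $\cone(\Delta_{*}\mathcal{O}_Z\to Q)[-1]$, whose support is contained in $\Supp(\Delta_{*}\mathcal{O}_Z)\cup\Supp Q=\Delta$ once $E$ is orthogonal over $Z$. For any kernel $K\in D^b_{coh}(Z\times Z)$ with $\Supp K\subseteq\Delta$ and any closed point $p$, one has $\iota_{p'}^{*}\Phi_K(\mathcal{O}_p)\simeq\iota_{(p,p')}^{*}K$, which vanishes whenever $p'\neq p$; hence $\Phi_K(\mathcal{O}_p)$, being an object of $D^b_{coh}(Z)$, has support contained in $\{p\}$, and equal to $\{p\}$ if it is nonzero. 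Applying this to the kernel of $F_E$ shows that $F_E(\mathcal{O}_p)$, if nonzero, is supported precisely at $p$.
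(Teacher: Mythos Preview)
Your proof is correct and follows essentially the same route as the paper's: you compute the derived fibre of $Q$ at a closed point $(q,p)$ as $\rder\homm_X(E_p,E_q)\otimes_k \iota_p^*D_{Z/k}$ via flat base change and the projection formula, invoke that the dualizing complex has full support so the second factor never vanishes, and then deduce the consequence about $F_E(\mathcal{O}_p)$ from the fact that the kernel of $F_E$ is supported on $\Delta$. The only cosmetic difference is that for the second assertion the paper works directly with the exact triangle $\mathcal{O}_p\to\iota_{p,Z}^*Q\to F_E(\mathcal{O}_p)[1]$ rather than passing through the support of the kernel, but the content is identical.
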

\begin{proof}
Let $q_{1}$ and $q_{2}$ be closed points of $Z$, let $\bar{q} =
(q_{1}, q_{2})$ be the corresponding point of $Z \times Z$ and 
denote by $\iota_{\bar{q}}$
the closed embedding $\bar{q} \hookrightarrow Z \times Z$. Since 
$Q \in D(Z \times Z)$ its cohomology sheaves are coherent and
only finite number of them are non-zero. 
It follows from the standard spectral sequence $\lder^{i} \iota^{*}_{\bar{q}}
\mathcal{H}^{j} Q \Rightarrow \lder^{i + j}  \iota^{*}_{\bar{q}} Q$
that $\bar{q} \in \supp_{Z \times Z} Q$ if and only if $
\iota^{*}_{\bar{q}} Q \ne 0$.

We have an independent fibre square
\begin{align}
\xymatrix{
X\; \ar[d]_{\pi_{k}} \ar@{^{(}->}[r]^>>>>>>{\iota_{X\bar{q}}} &
Z \times X \times Z \ar[d]^{\pi_{13}} \\
\spec k \; \ar@{^{(}->}[r]^>>>>>>>{\iota_{\bar{q}}} & Z \times Z 
}
\end{align}
and thus 
\begin{align}
\iota^*_{\bar{q}} Q & = \iota^*_{\bar{q}} 
\pi_{13 *} \left( \pi^*_{12} E \otimes \pi^*_{23} E^\vee 
\otimes \pi^*_{23} \pi^!_X \mathcal{O}_X \right)
\simeq \\ \notag
& \simeq \pi_{k *} \iota^*_{X\bar{q}} 
\left( \pi^*_{12} E \otimes \pi^*_{23} E^\vee 
\otimes \pi^*_{23} \pi^!_X \mathcal{O}_X \right) \simeq \\ \notag
& \simeq \pi_{k *} \left( \rder\shhomm(E_{q_2}, E_{q_1}) \otimes
\iota^*_{Xq_2} 
\pi^!_X \mathcal{O}_X\right)
\end{align}
We have a pair of independent fibre squares
\begin{align}
\xymatrix{
X \ar^{\iota_{X_{q_2}}}[r] \ar_{\pi_{k}}[d] & 
Z \times X \ar^{\pi_{X}}[r] \ar_{\pi_{Z}}[d] & 
X \ar^{\pi_{k}}[d] \\
\spec k \ar_{\iota_{q_2}}[r] &
Z \ar_{\pi_{k}}[r] & 
\spec k
}
\end{align}
and thus $\pi^{!}_{X} \mathcal{O}_X
\simeq \pi^{*}_{Z} D_{Z/k}$, where $D_{Z/k}$ is the
dualizing complex $\pi^{!}_{k}(k)$ on $Z$. Therefore
$$\iota^*_{X_{q_2}} \pi^!_X \mathcal{O}_X \simeq 
\iota^*_{X_{q_2}} \pi^{*}_{Z} D_{Z/k} \simeq \pi^*_k \iota^*_{q_2}
D_{Z/k},$$ and so finally: 
\begin{align} \label{eqn-pullback-of-Q-final}
\iota^*_{\bar{q}} Q & = \pi_{k *} \left( \rder\shhomm(E_{q_2}, E_{q_1}) \otimes
\iota^*_{X_{q_2}} 
\pi^!_X \mathcal{O}_X\right) \simeq \\ 
\notag & \simeq
\pi_{k *} \left( \rder\shhomm(E_{q_2}, E_{q_1}) \otimes
\pi^*_k \iota^*_{q_2} D_{Z/k} \right) \simeq \\
\notag & \simeq
\pi_{k *} \rder\shhomm(E_{q_2}, E_{q_1}) \otimes
\iota^*_{q_2} D_{Z/k} \simeq 
\rder \homm_{D(X)}(E_{q_2}, E_{q_1}) \otimes \iota^*_{q_2} D_{Z/k}.
\end{align}

By
\cite[Lemma 1.3.7]{AvramovIyengarLipman-ReflexivityAndRigidityForComplexesIISchemes}
the support of any semi-dualizing (and, in particular,
of any dualizing) complex on a noetherian scheme is the whole of
the scheme. Therefore $\iota^*_{q_2} D_{Z/k}$ is non-zero for any $q_2
\in Z$. It then follows from \eqref{eqn-pullback-of-Q-final}
that $\iota^*_{\bar{q}} Q \neq 0$ if and only if
$\homm^i_{D(X)}(E_{q_2}, E_{q_1}) \neq 0$ for some $i \in \mathbb{Z}$.
Therefore the support of $Q$ in $Z \times Z$ consists precisely
of all points $(q_1,q_2)$ for which 
$\homm^i_{D(X)}(E_{q_2}, E_{q_1}) \neq 0$ for some $i \in \mathbb{Z}$. 
Whence the assertion that $E$ is orthogonal over $Z$ if and only 
if the support of $Q$ lies within the diagonal of $Z \times Z$. 

For the second assertion, recall that 
$\Phi^{\mathrm radj}_E \Phi_E (\mathcal{O}_p) \simeq \iota_{p,Z}^* Q$ 
and therefore $\iota_{p,Z}^* Q$ fits into 
an exact triangle
$$ \mathcal{O}_p \rightarrow \iota_{p,Z}^* Q \rightarrow
F_E(\mathcal{O}_p)[1] $$
in $D(Z)$. Since the support of $\mathcal{O}_p$ is $p$ and
the support of $\iota_{p,Z}^* Q$ lies within 
$\iota_{p,Z}^{-1} \supp_{Z \times Z} Q \subseteq \iota_{p,Z}^{-1}
\Delta = p$, it follows that the support of $F_E(\mathcal{O}_p)$
also lies within the point $p$. If the object $F_E(\mathcal{O}_p)$ is
non-zero its support is closed and non-empty and must therefore be
precisely $p$. 
\end{proof}
\begin{proof}[Proof of Proposition
 \ref{prps-tfae-left-cotwist-is-an-equivalence}]
$(\ref{item-sphericity-condition-on-exts-for-fibres})
\Leftrightarrow (\ref{item-invertibility-of-L_E})$:
By \cite{AvramovIyengarLipman-ReflexivityAndRigidityForComplexesIISchemes},
Theorem 1.5.2 the object $\mathcal{L}_E$ is invertible if and only if
for every closed point $p \in Z$ it is isomorphic in the neighborhood
of $p$ to $\mathcal{O}_Z[d_p]$ for some $d_p \in \mathbb{Z}$. 
This is equivalent to having $\iota^*_p \mathcal{L}_E = k[d_p]$. 
We have an exact triangle 
\begin{align} \label{eqn-pullback-of-shhom-pipiE-E-map}
k 
\xrightarrow{\eqref{eqn-plback-shhom-pipiE-E-map}} 
\rder\homm_{D(X)}(\pi_{X *} E, E_p) \rightarrow \iota_{p}^{*} \mathcal{L}_{E}
\end{align}
in $D(\vectspaces)$. Hence 
$\iota^*_p \mathcal{L}_E = k[d_p]$ for some $d_p \in \mathbb{Z}$ is
equivalent to either $\rder\homm_{D(X)}(\pi_{X *} E, E_p) = 0$
and $d_p = 1$ or to $\rder\homm_{D(X)}(\pi_{X *} E, E_p) = k \oplus k[d_p]$ and 
$\eqref{eqn-plback-shhom-pipiE-E-map}$, and hence 
\eqref{eqn-nat-morphism-pi_X*-E-to-E_p}, not being the zero morphism. 
Therefore to establish 
$(\ref{item-sphericity-condition-on-exts-for-fibres})
\Leftrightarrow (\ref{item-invertibility-of-L_E})$
and the first of the two assertions in the end it remains only to show
that if $E_p \neq 0$ then $\rder\homm_{D(X)}(\pi_{X *} E, E_p) \neq 0$.

By Lemma 
\ref{lemma-pullback-of-L-E-morphism-is-iso-to-pushdown-of-the-adjunction}
the morphism $\eqref{eqn-plback-shhom-pipiE-E-map}$
is isomorphic to $\pi_{k *}$ applied to the adjunction unit 
$\mathcal{O}_p \rightarrow \Phi^{\mathrm{radj}}_E \Phi_E (\mathcal{O}_p)$.  
If $E_p = \Phi_E(\mathcal{O}_p) \neq 0$, then this adjunction unit
is non-zero and hence $\Phi^{\mathrm{radj}}_E \Phi_E (\mathcal{O}_p) \neq 0$. 
In Lemma \ref{lemma-support-of-Q-is-contained-within-the-diagonal} we've
shown that both 
$\mathcal{O}_p$ and $\Phi^{\mathrm{radj}}_E \Phi_E (\mathcal{O}_p)$
are supported at $p \in Z$. It suffices therefore to show that
the functor $\pi_{k *}$ is injective on objects
of the full subcategory $D_{p}(Z)$ of $D(Z)$
consisting of the complexes whose cohomology is supported at $p \in
Z$. Indeed, let $U$ be any open 
affine subset of $Z$ containing $p$, let $\iota_U$ be the
corresponding open immersion and observe
that $\iota_{U *}$ restricts to an equivalence 
$\iota_{U *}\colon D_{p}(U) \xrightarrow{\sim} D_{p}(X)$
whose inverse is $\iota^*_{U}$. On the other hand, 
$D_p(U) \xrightarrow{\pi_{k *}} D(\vectspaces)$ decomposes as
\begin{align} \label{eqn-decomposition-of-the-pushdown-to-spec-k}
D_p(U) \xrightarrow{\Gamma^*} D_p(\mathcal{O}_X(U)\text{-}\modd) 
\xrightarrow{\text{forgetful}} D(\vectspaces)
\end{align}
Here $\Gamma^*$ is the derived global sections functor and 
it is an equivalence since $U$ is affine. The 
functor of forgetting the $\mathcal{O}_X(U)$-module 
structure is injective on objects. The claim now follows.

$\eqref{item-invertibility-of-L_E} \Leftrightarrow 
\eqref{item-the-left-co-twist-is-an-equivalence}$:
The object $\mathcal{L}_E$ is invertible if and only if
for every closed point $p \in Z$ we have 
$\iota^*_p(\mathcal{L}_E) = k[d_p]$ for some $d_p \in \mathbb{Z}$. 
By Lemma 
\ref{lemma-pullback-of-L-E-morphism-is-iso-to-pushdown-of-the-adjunction}
we have $\iota^*_p(\mathcal{L}_E) = \pi_{k *} F_E(\mathcal{O}_p)[1]$. By
Lemma \ref{lemma-support-of-Q-is-contained-within-the-diagonal}
the object $F_E(\mathcal{O}_p)$ lies in 
the full subcategory $D_{p}(Z)$ of $D(Z)$
consisting of the complexes whose cohomology is supported at 
$p$. Finally, the decomposition 
$\eqref{eqn-decomposition-of-the-pushdown-to-spec-k}$ makes it
clear that the only object of $D_{p}(Z)$ whose image in
$D(\vectspaces)$ under $\pi_{k *}$ is precisely $k$ is the point
sheaf $\mathcal{O}_p$. We conclude that $\mathcal{L}_E$ is invertible
if and only if 
\begin{align}
\label{eqn-F_E-of-a-point-sheaf-is-a-sheaf-of-a-point-sheaf}
\forall\; p \in Z, \quad F_E(\mathcal{O}_p) = \mathcal{O}_p[d] 
\quad\quad \text{ for some } d \in \mathbb{Z}.
\end{align}

Suppose $\eqref{eqn-F_E-of-a-point-sheaf-is-a-sheaf-of-a-point-sheaf}$ holds. 
Let $Q'$ be the Fourier--Mukai kernel of the co-twist $F_E$. 
Since $\iota^*_{p,Z} Q' \simeq F_E(\mathcal{O}_p)$ 
by the semicontinuity theorem 
(\cite{Grothendieck-EGA-III-2}, \em Th{\'e}or{\`e}me 7.6.9\rm)
the shift $d$ in \eqref{eqn-F_E-of-a-point-sheaf-is-a-sheaf-of-a-point-sheaf}
is constant on every connected component of $Z$. Let $U$ be such 
a connected component, then the spectral sequence argument 
of \cite{Bridg97}, Lemma 4.3 
shows that the restriction of $Q'$ to $U \times Z$ is
the shift by $d$ of a coherent sheaf flat over $U$,
whose restriction to the fibre $\{p\} \times Z$ over every point 
$p \in U$ is precisely $\mathcal{O}_p$. Any such sheaf is necessarily
a line bundle supported on the diagonal 
$U \hookrightarrow U \times Z$. We conclude that
globally $Q'= \Delta_* \mathcal{L}'$ for some invertible object 
$\mathcal{L}'$ of $D(Z)$. This immediately implies that 
the corresponding Fourier--Mukai transform $F_E$ is an equivalence. 

Conversely, suppose $F_E$ is an equivalence. Let $p$ be any closed 
point of $Z$. As $F_E$ is an equivalence we have $\homm^{<0}_{D(Z)}
\left( F_E(\mathcal{O}_p), F_E(\mathcal{O}_p) \right) = 0$ and 
$\homm^{0}_{D(Z)}\left( F_E(\mathcal{O}_p), F_E(\mathcal{O}_p)
\right) = k$. By Lemma 
\ref{lemma-support-of-Q-is-contained-within-the-diagonal} the support
of $F_E(\mathcal{O}_p)$ is precisely $p$. Now the same spectral
sequence argument as in Proposition $2.2$ of
\cite{BondalOrlov-ReconstructionOfAVarietyFromTheDerivedCategory}
shows that $F_E(\mathcal{O}_p) = \mathcal{O}_p[d]$ for some $d \in
\mathbb{Z}$.  

For the second of the two assertions in the end:
it follows from the definition of $F_E$ that $Q'$ is the object
$$ \cone \left(\Delta_* \mathcal{O}_{Z}
\xrightarrow{\eqref{eqn-pi1-unit-in-fm-right-adjunction-unit-morphism}-
\eqref{eqn-delta-unit-in-fm-right-adjunction-unit-morphism}}
Q\right) [-1] $$
of $D(Z \times Z)$. Therefore by Lemma
\ref{lemma-pullback-of-L-E-morphism-is-iso-to-pushdown-of-the-adjunction}
we have $\tilde{\pi}_{1 *} Q' \simeq \mathcal{L}_E[-1]$. 
Above we've shown that $Q' = \Delta_* \mathcal{L}'$ for some
invertible object $\mathcal{L}' \in D(Z)$
and since $\tilde{\pi}_{1 *} \Delta_* \simeq \id_{D(Z)}$ it 
follows that $\mathcal{L}' \simeq \mathcal{L}_E [-1]$. 
\end{proof}

The following lemma shows that when verifying 
the condition \eqref{item-sphericity-condition-on-exts-for-fibres}
of Prop. \ref{prps-tfae-left-cotwist-is-an-equivalence} 
one doesn't have to check that
$\pi_{X *} E \xrightarrow{ \eqref{eqn-nat-morphism-pi_X*-E-to-E_p} } E_p$
is non-zero provided the integer $d_p$ is non-positive:
  
\begin{lemma}
\label{lemma-d_p-less-than-zero-morphism-pi_X*-E-to-E_p-doesnt-vanish}
Let $E$ be a perfect object of $D(Z \times X)$ orthogonal over $Z$. 
Let $p \in Z$ be such that
$$ \rder\homm_{D(X)}(\pi_{X *} E, E_p) = k \oplus k[d_p]
\quad\text{ for some } d_p \in \mathbb{Z}.$$
If $d_p \leq 0$ then the natural morphism 
$\pi_{X *} E 
\xrightarrow{ \eqref{eqn-nat-morphism-pi_X*-E-to-E_p} }
E_p$ is non-zero. 
\end{lemma}

\begin{proof}
Recall that  
$k \xrightarrow{\eqref{eqn-plback-shhom-pipiE-E-map}}
\rder\homm_{D(X)}(\pi_{X *} E, E_p)
$
sends $1$ to the morphism \eqref{eqn-nat-morphism-pi_X*-E-to-E_p}.
By Lemma 
\ref{lemma-pullback-of-L-E-morphism-is-iso-to-pushdown-of-the-adjunction}
the morphism $\eqref{eqn-plback-shhom-pipiE-E-map}$
is isomorphic to $\pi_{k *}$ applied to the adjunction unit 
$\mathcal{O}_p \rightarrow \Phi^{\mathrm{radj}}_E \Phi_E (\mathcal{O}_p)$. 
By adjunction 
$$\pi_{k *}\left(
\mathcal{O}_p \rightarrow \Phi^{\mathrm{radj}}_E \Phi_E (\mathcal{O}_p)
\right)$$
being non-zero is equivalent to
\begin{align*}
\pi^*_k \pi_{k *} \mathcal{O}_p = \mathcal{O}_Z \twoheadrightarrow \mathcal{O}_p \rightarrow
\Phi^{\mathrm{radj}}_E \Phi_E (\mathcal{O}_p)
\end{align*}
being non-zero.
It suffices therefore to show that the induced morphism 
$\mathcal{O}_p \rightarrow 
\mathcal{H}^{0}(\Phi^{\mathrm{radj}}_E \Phi_E (\mathcal{O}_p))$ is
non-zero. 

By Lemma \ref{lemma-support-of-Q-is-contained-within-the-diagonal}
the support of $\Phi^{\mathrm{radj}}_E \Phi_E (\mathcal{O}_p)$ is $p$. 
Hence $\pi_{k *} \Phi^{\mathrm{radj}}_E \Phi_E (\mathcal{O}_p) = k
\oplus k[d_p]$ implies that the only non-zero cohomology sheaves
of $\Phi^{\mathrm{radj}}_E \Phi_E$ are $\mathcal{O}_p$ in degree $0$
and $-d_p$. The standard spectral sequence 
$$
\ext^i_Z\left(\mathcal{O}_p, \mathcal{H}^j ( 
\Phi^{\mathrm{radj}}_E \Phi_E (\mathcal{O}_p)) \right) 
\Rightarrow \homm^{i+j}_{D(Z)}\left(\mathcal{O}_p, 
\Phi^{\mathrm{radj}}_E \Phi_E (\mathcal{O}_p) \right) $$ 
and the fact that $d_p \leq 0$ imply that 
$$
\homm_{D(Z)}\left(\mathcal{O}_p, 
\Phi^{\mathrm{radj}}_E \Phi_E (\mathcal{O}_p) \right) 
\simeq
\homm_Z\left(\mathcal{O}_p, 
\mathcal{H}^0(\Phi^{\mathrm{radj}}_E \Phi_E (\mathcal{O}_p)) \right), $$
the isomorphism being given by restriction to the $0$th cohomology 
sheaves. Therefore it suffices to show that the adjunction unit
$\mathcal{O}_p \rightarrow \Phi^{\mathrm{radj}}_E \Phi_E (\mathcal{O}_p)$ 
itsef is a non-zero 
morphism. But since $\rder\homm_{D(X)}(\pi_{X *} E, E_p) \neq 0$, we must
also have $E_p \neq 0$ and hence the adjunction unit
is non-zero. 
\end{proof}

Suppose now that $E$ satisfies the equivalent conditions 
of Proposition \ref{prps-tfae-left-cotwist-is-an-equivalence}.
Then the co-twist $F_E$ is an autoequivalence of $D(Z)$
whose Fourier--Mukai kernel is $\Delta_* \mathcal{L}_E[-1]$.
Recall the definition of $\Phi_E$ being spherical given 
in Defn.~\ref{defn-spherical-functor}. The four functorial 
exact triangles in it are constructed on the level of Fourier-Mukai
kernels. In other words, we
have implicitly fixed once and for all a completion to exact
triangles of the four morphisms given in Section
\ref{section-adjunction-units-and-fm-transforms}
which underlie the adjunction units and co-units of $\Phi_E$. 

Let $\kappa$ be the morphism in the chosen completion of
\eqref{eqn-pi1-unit-in-fm-right-adjunction-unit-morphism}-\eqref{eqn-delta-unit-in-fm-right-adjunction-unit-morphism}
to an exact triangle
\begin{align} \label{eqn-L_E-exact-triangle}
\Delta_* \mathcal{O}_{Z} 
\xrightarrow{\eqref{eqn-pi1-unit-in-fm-right-adjunction-unit-morphism}-
\eqref{eqn-delta-unit-in-fm-right-adjunction-unit-morphism}}
\pi_{13 *} \left(\pi_{12}^* E \otimes \pi_{23}^*(E^\vee \otimes
\pi^!_X \mathcal{O}_{X})\right)
\xrightarrow{\kappa} 
\Delta_* \mathcal{L}_E.
\end{align}
Denote by $\kappa_{FM}$ the corresponding natural 
transformation in the corresponding functorial exact triangle
\eqref{eqn-cotwist-triangle}.

By Cor.~\ref{cor-spherical-functors-for-dummies} the functor $\Phi_E$
is spherical, and thus the object $E$ is spherical over $Z$, if and only if
$F_E$ is an autoequivalence and the composition
\begin{align}
\label{eqn-canonical-morphism-of-left-adjoint-into-twisted-right-one}
\Phi^{\mathrm radj}_E \xrightarrow{\Phi^{\mathrm radj}_E(\id \rightarrow \Phi_E
\Phi^{\mathrm ladj}_E)}
\Phi^{\mathrm radj}_E \Phi_E \Phi^{\mathrm ladj}_E  
\xrightarrow{\kappa_{FM}}
F_E[1] \Phi^{\mathrm ladj}_E 
\end{align}
is an isomorphism of functors. The Fourier--Mukai kernel of 
$\Phi^{\mathrm radj}_E$ is $E^\vee \otimes \pi^!_X(\mathcal{O}_X)$ 
and the Fourier--Mukai kernel of $F_E[1] \Phi^{\mathrm ladj}_E$ is 
$$ \pi^*_{Z}(\mathcal{L}_E) \otimes \left( 
E^\vee \otimes \pi^!_{Z}(\mathcal{O}_{Z})\right)
\simeq E^\vee \otimes \pi^!_{Z}(\mathcal{L}_E).$$
We can therefore define:
\begin{defn}
\label{defn-canonical morphism-alpha-E} 
Define $\alpha$ to be the morphism 
$$ 
E^\vee \otimes \pi^!_X(\mathcal{O}_X) \xrightarrow{\alpha}
E^\vee \otimes \pi^!_Z (\mathcal{L}_E)
$$ 
of Fourier Mukai kernels which underlies the natural moprhism 
\eqref{eqn-canonical-morphism-of-left-adjoint-into-twisted-right-one}
if we choose 
\eqref{eqn-pi1-unit-in-fm-left-adjunction-unit-morphism}-\eqref{eqn-delta-unit-in-fm-left-adjunction-unit-morphism}
and $\kappa$ as underlying morphisms of 
$\id \rightarrow \Phi_E \Phi^{\mathrm ladj}_E$ and $\kappa_{FM}$,
respectively. 
\end{defn}
By Lemma 
\ref{lemma-morphism-of-FM-kernels-induces-iso-of-FMs-iff-itis-an-iso-itself}
the composition
\eqref{eqn-canonical-morphism-of-left-adjoint-into-twisted-right-one}
is an isomorphism if and only if the underlying morphism $\alpha$ is. 
We therefore obtain immediately the main result of this section:
\begin{theorem} 
\label{theorem-sphericity-for-orthogonal-objects-of-ZxX}
Let $X$ and $Z$ be two separable schemes of finite type over $k$.
Let $E$ be a perfect object of $D(Z \times X)$ orthogonal over $Z$. 
Then $E$ is spherical over $Z$ if and only if 
\begin{enumerate}
\item
For every closed point $p \in Z$ such that the fibre $E_p$
is non-zero  
$$ \rder\homm_{D(X)}(\pi_{X *} E, E_p) = k \oplus k[d_p]
\quad\text{ for some } d_p \in \mathbb{Z}$$
and the natural morphism $\pi_{X *} E
\xrightarrow{ \eqref{eqn-nat-morphism-pi_X*-E-to-E_p} } E_p$
is not zero. 
\item The canonical morphism 
\begin{align*}
E^\vee \otimes \pi^!_X(\mathcal{O}_X) 
\xrightarrow{\alpha}
E^\vee \otimes \pi^!_{Z}(\mathcal{L}_E)
\quad\quad\quad \text{(see Definition \ref{defn-canonical
morphism-alpha-E})}
\end{align*}
is an isomorphism.
\end{enumerate}
\end{theorem}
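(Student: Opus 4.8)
The plan is to reduce the statement, essentially by bookkeeping, to the two structural results already established: Proposition \ref{prps-tfae-left-cotwist-is-an-equivalence} and Lemma \ref{lemma-morphism-of-FM-kernels-induces-iso-of-FMs-iff-itis-an-iso-itself}. By Definition \ref{defn-spherical-objects}, $E$ is spherical over $Z$ precisely when $\Phi_E\colon D(Z)\rightarrow D(X)$ is a spherical functor, i.e. when both (a) the left co-twist $F_E$ is an autoequivalence of $D(Z)$, and (b) the natural transformation \eqref{eqn-sphericity-condition-natural-isomorphism} is an isomorphism of functors; here one notes that \eqref{eqn-sphericity-condition-natural-isomorphism} coincides with the composition \eqref{eqn-canonical-morphism-of-left-adjoint-into-twisted-right-one}, since the map $\Phi^{\mathrm radj}_E\Phi_E\rightarrow F_E[1]$ of \eqref{eqn-left-cotwist-triangle} is precisely $\kappa_{FM}$. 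So the goal is to prove (a) $\Leftrightarrow$ (1) and, granting (1), (b) $\Leftrightarrow$ (2).

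First I would dispose of clause (a). Since $E$ is orthogonal over $Z$, Proposition \ref{prps-tfae-left-cotwist-is-an-equivalence} applies verbatim: it says $F_E$ is an autoequivalence of $D(Z)$ if and only if condition (1) holds, and moreover that when this is the case the object $\mathcal{L}_E$ of Definition \ref{defn-object-L_E-for-abstract-objects-of-D-ZxX} is invertible, $F_E\simeq\mathcal{L}_E\otimes(\text{-})[-1]$, and the Fourier--Mukai kernel of $F_E$ is $\Delta_*\mathcal{L}_E[-1]$. Next, assuming (a)/(1), I would pin down the kernels relevant to clause (b): by Section \ref{section-adjunction-units-and-fm-transforms} the kernel of $\Phi^{\mathrm radj}_E$ is $E^\vee\otimes\pi^!_X(\mathcal{O}_X)$, and because the kernel of $F_E$ is supported on the diagonal, convolution with it is merely tensoring by $\mathcal{L}_E[-1]$, so the kernel of $F_E[1]\Phi^{\mathrm ladj}_E$ is $E^\vee\otimes\pi^!_Z(\mathcal{L}_E)$. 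The key point is then to realize \eqref{eqn-canonical-morphism-of-left-adjoint-into-twisted-right-one} as a natural transformation induced by an honest morphism of these kernels: its first factor, the adjunction unit $\id\rightarrow\Phi_E\Phi^{\mathrm ladj}_E$, is induced by the kernel morphism \eqref{eqn-pi1-unit-in-fm-left-adjunction-unit-morphism}--\eqref{eqn-delta-unit-in-fm-left-adjunction-unit-morphism} of Proposition \ref{prps-left-adjunction-unit-morphism}, and its second factor $\kappa_{FM}$ is induced by the morphism $\kappa$ completing \eqref{eqn-L_E-exact-triangle}; composing them yields precisely the morphism $\alpha$ of Definition \ref{defn-canonical morphism-alpha-E}. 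Lemma \ref{lemma-morphism-of-FM-kernels-induces-iso-of-FMs-iff-itis-an-iso-itself} then converts ``\eqref{eqn-canonical-morphism-of-left-adjoint-into-twisted-right-one} is an isomorphism of functors'' into ``$\alpha$ is an isomorphism'', which is condition (2). Assembling the pieces, $E$ is spherical over $Z$ iff (a) and (b) both hold iff (1) and (2) both hold.

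Since the genuinely substantive content lives in Proposition \ref{prps-tfae-left-cotwist-is-an-equivalence} and Lemma \ref{lemma-morphism-of-FM-kernels-induces-iso-of-FMs-iff-itis-an-iso-itself}, the only real obstacle is keeping the bookkeeping honest: one must check that \eqref{eqn-sphericity-condition-natural-isomorphism} is literally represented by the single kernel morphism $\alpha$, which requires that each of its two constituent maps be induced by a morphism of kernels (so that the composite is) and that the target kernel $E^\vee\otimes\pi^!_Z(\mathcal{L}_E)$ genuinely computes $F_E[1]\Phi^{\mathrm ladj}_E$, i.e. that convolution with the diagonal-supported kernel $\Delta_*\mathcal{L}_E[-1]$ of $F_E$ amounts to tensoring by the shifted line bundle $\mathcal{L}_E[-1]$. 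It is also worth recording, so that condition (2) is well-posed, that $\alpha$ is only defined once (1) is known -- that is what makes $\mathcal{L}_E$ invertible and supplies the exact triangle \eqref{eqn-L_E-exact-triangle} -- and that $\alpha$ depends on the chosen underlying kernel morphisms only up to the equivalence relation of the preliminaries, so that whether $\alpha$ is an isomorphism does not depend on those choices.
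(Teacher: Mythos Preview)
Your proposal is correct and follows essentially the same approach as the paper: indeed, the paper does not even give a formal proof block for this theorem, writing only that by Lemma \ref{lemma-morphism-of-FM-kernels-induces-iso-of-FMs-iff-itis-an-iso-itself} the composition \eqref{eqn-canonical-morphism-of-left-adjoint-into-twisted-right-one} is an isomorphism if and only if $\alpha$ is, and that the theorem therefore follows immediately from this together with Proposition \ref{prps-tfae-left-cotwist-is-an-equivalence}. Your write-up is in fact more explicit than the paper's, correctly flagging that condition (2) is only well-posed once (1) is known and spelling out the identification of the kernel of $F_E[1]\Phi^{\mathrm ladj}_E$ with $E^\vee\otimes\pi^!_Z(\mathcal{L}_E)$.
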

Whenever $E$ is orthogonally spherical over $Z$, the object $\mathcal{L}_E$ 
is invertible in $D(Z)$ and so locally around any closed point 
$p \in Z$ we have $\mathcal{L}_E \simeq \mathcal{O}_Z[d_p]$
for some $d_p \in \mathbb{Z}$. Over the locus where $Z$ and $X$ are not 
too degenerate this integer is the
difference in dimensions between $X$ and $Z$:
\begin{prps}
\label{prps-the-shift-of-L_E-is-the-difference-in-dimensions}
Let $E$ be an object of $D(Z \times X)$ orthogonally spherical over
$Z$. Let $(p,q) \in Z \times X$ be a Gorenstein point in the support
of $E$ if such exists. Then 
$$ d_p = - (\dim_q X - \dim_p Z). $$
\end{prps}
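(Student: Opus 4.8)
The plan is to read off the integer $d_p$ by comparing the two descriptions of the Fourier--Mukai kernel of the left adjoint $\Phi^{\mathrm ladj}_E$. Since $E$ is orthogonally spherical over $Z$, condition $(2)$ of Theorem \ref{theorem-sphericity-for-orthogonal-objects-of-ZxX} provides an isomorphism of objects of $D(Z\times X)$
\[
E^\vee \otimes \pi^!_X(\mathcal{O}_X) \;\xrightarrow{\sim}\; E^\vee \otimes \pi^!_Z(\mathcal{L}_E),
\]
and by Proposition \ref{prps-tfae-left-cotwist-is-an-equivalence} we have $\mathcal{L}_E \simeq \mathcal{O}_Z[d_p]$ in a neighbourhood of $p$ (note $E_p\neq 0$, since $(p,q)\in\supp E$ forces $q\in\supp E_p$, so this $d_p$ is the one appearing in the statement). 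Hence it suffices to compute, locally around $(p,q)$, the amounts by which $\pi^!_X(\mathcal{O}_X)$ and $\pi^!_Z(\mathcal{L}_E)$ differ in cohomological degree from a line bundle, and to observe that tensoring by the nowhere-vanishing perfect object $E^\vee$ can only detect that difference in shift.

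First I would rewrite the two relative dualizing objects. Flat base change along the fibre squares expressing $Z\times X$ as $Z\times_k X$ gives natural isomorphisms $\pi^!_X(\mathcal{O}_X)\simeq\pi^*_Z(D_{Z/k})$ and $\pi^!_Z(\mathcal{O}_Z)\simeq\pi^*_X(D_{X/k})$ with $D_{Z/k}$, $D_{X/k}$ the dualizing complexes of $Z$ and $X$ (this is exactly the base change already used in the proof of Lemma \ref{lemma-support-of-Q-is-contained-within-the-diagonal}); and since $\mathcal{L}_E$ is invertible, hence perfect, Lemma \ref{lemma-natural-map-f^!=f^*-f^!-is-iso-for-perf-and-S-perf} with $S=\spec k$ yields $\pi^!_Z(\mathcal{L}_E)\simeq\pi^*_Z(\mathcal{L}_E)\otimes\pi^*_X(D_{X/k})$. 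Now the hypothesis that $(p,q)$ is a Gorenstein point of $Z\times X$ means $\mathcal{O}_{Z\times X,(p,q)}$ is a Gorenstein local ring; it is faithfully flat over $\mathcal{O}_{Z,p}$ and over $\mathcal{O}_{X,q}$, so by faithfully flat descent of the Gorenstein property both $\mathcal{O}_{Z,p}$ and $\mathcal{O}_{X,q}$ are Gorenstein. Consequently $D_{Z/k}\simeq\omega_Z[\dim_p Z]$ for a line bundle $\omega_Z$ near $p$, and $D_{X/k}\simeq\omega_X[\dim_q X]$ for a line bundle $\omega_X$ near $q$.

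Substituting these into the displayed isomorphism and using $\mathcal{L}_E\simeq\mathcal{O}_Z[d_p]$, I obtain, locally around $(p,q)$,
\[
E^\vee \otimes \pi^*_Z(\omega_Z)\,[\dim_p Z] \;\simeq\; E^\vee \otimes \pi^*_X(\omega_X)\,[\dim_q X + d_p].
\]
Writing $M$ for the line bundle $\pi^*_Z(\omega_Z)\otimes\pi^*_X(\omega_X)^{-1}$ and $c=\dim_q X+d_p-\dim_p Z$, this becomes $E^\vee\otimes M\simeq E^\vee[c]$ near $(p,q)$, hence $\mathcal{H}^j(E^\vee)\otimes M\simeq\mathcal{H}^{j+c}(E^\vee)$ for every $j$ in a neighbourhood of $(p,q)$. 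If $c\neq 0$, iterating this isomorphism and using that tensoring by a line bundle preserves non-vanishing would force the cohomology sheaves of $E^\vee$ to be non-zero in arbitrarily large (if $c>0$) or arbitrarily small (if $c<0$) degrees near $(p,q)$; but $E^\vee$ is a bounded complex and, since $(p,q)\in\supp E=\supp E^\vee$, it is non-zero there, a contradiction. Therefore $c=0$, i.e. $d_p=-(\dim_q X-\dim_p Z)$. The argument is essentially bookkeeping; the only step demanding genuine care is keeping track of all the shifts through the base-change and Grothendieck-duality isomorphisms and correctly invoking faithfully flat descent of the Gorenstein property to localise $D_{Z/k}$ and $D_{X/k}$ --- with that in hand, the boundedness of $E^\vee$ closes the proof at once.
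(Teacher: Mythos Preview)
Your proof is correct and follows essentially the same strategy as the paper: exploit the isomorphism $E^\vee \otimes \pi^!_X(\mathcal{O}_X)\simeq E^\vee \otimes \pi^!_Z(\mathcal{L}_E)$ coming from sphericity, identify both sides locally around $(p,q)$ as shifts of $E^\vee$ tensored by line bundles using the Gorenstein hypothesis, and then conclude from the boundedness and non-vanishing of $E^\vee$ at $(p,q)$ that the shifts must agree. The only cosmetic difference is that the paper restricts directly to $\spec\mathcal{O}_{Z\times X,(p,q)}$ and argues that the two projections $\pi_{Z,p}$, $\pi_{X,q}$ are Gorenstein maps (via the factorisation property for Gorenstein morphisms), whereas you first rewrite $\pi^!_X(\mathcal{O}_X)\simeq\pi^*_Z D_{Z/k}$ and $\pi^!_Z(\mathcal{O}_Z)\simeq\pi^*_X D_{X/k}$ by base change and then use faithfully flat descent to deduce that $\mathcal{O}_{Z,p}$ and $\mathcal{O}_{X,q}$ are individually Gorenstein; these are two packagings of the same fact.
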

\begin{proof}
If $E$ is spherical over $Z$ the canonical map
\begin{align*} 
E^\vee \otimes \pi^!_X(\mathcal{O}_X) \xrightarrow{\alpha}
E^\vee \otimes \pi^!_Z(\mathcal{L}_E) 
\end{align*}
is an isomorphism. Let us restrict it to $\spec \mathcal{O}_{Z \times X,
(p,q)}$. Since $\mathcal{O}_{Z \times X, (p,q)} 
= \mathcal{O}_{Z,p} \otimes \mathcal{O}_{X,q}$ is Gorenstein, 
the structure map $\spec \mathcal{O}_{Z \times X, (p,q)} \rightarrow \spec k$ 
is Gorenstein.  Therefore the projections $\pi_{Z,p}$
and $\pi_{X,q}$ are Gorenstein, since we can filter 
$\spec \mathcal{O}_{Z \times X, (p,q)} \rightarrow \spec k$
through them 
\begin{align}
\xymatrix{
\spec \mathcal{O}_{Z \times X, (p,q)}
\ar[d]_{\pi_{Z,q}}
\ar[r]^<<<<{\pi_{X,q}}
&
\spec \mathcal{O}_{X,q} \ar[d]
\\
\spec \mathcal{O}_{Z,p} \ar[r]
& k
} 
\end{align}
and for perfect maps (and therefore for flat maps such as these)
the composition of two maps is Gorenstein
if and only if both composants are, 
\cite[Prop. 2.3]{AvramovFoxby-GorensteinLocalHomomorphisms}.
Therefore
\begin{align*}
\pi^!_{Z,p}(\mathcal{O}_{Z,p}) = \mathcal{O}_{Z \times X, (p,q)} [
\dim \mathcal{O}_{Z \times X, (p,q)} - \dim \mathcal{O}_{Z,p}] = 
\mathcal{O}_{Z \times X,(p,q)}[\dim \mathcal{O}_{X,q}] \\
\pi^!_{X,q}(\mathcal{O}_{X,q}) = \mathcal{O}_{Z \times X, (p,q)} [
\dim \mathcal{O}_{Z \times X, (p,q)} - \dim \mathcal{O}_{X,q}] = 
\mathcal{O}_{Z \times X,(p,q)}[\dim \mathcal{O}_{Z,p}] 
\end{align*}
and so $\alpha$ restricts to $\spec 
\mathcal{O}_{Z \times X, (p,q)}$ as
$$
E^\vee|_{\mathcal{O}_{Z \times X, (p,q)}} [\dim \mathcal{O}_{Z,p}] 
\xrightarrow{\sim}
E^\vee|_{\mathcal{O}_{Z \times X, (p,q)}} [\dim \mathcal{O}_{X,q} + d_p] 
$$
Since $(p,q)$ lies in the support of $E$, 
the restriction $E^\vee|_{\mathcal{O}_{Z \times X, (p,q)}}$ is a
non-zero bounded complex. So 
$$ \dim \mathcal{O}_{Z,p} = \dim \mathcal{O}_{X,q} + d_p $$
whence the claim.  
\end{proof}

\subsection{Concerning the canonical morphism $\alpha$}
\label{section-the-canonical-morphism-alpha}

A reader who wasn't at all disturbed by the words 
``the canonical morphism $\alpha$ is an isomorphism'' 
in Theorem \ref{theorem-sphericity-for-orthogonal-objects-of-ZxX}
probably doesn't need to read this section. 
However to apply 
Theorem \ref{theorem-sphericity-for-orthogonal-objects-of-ZxX}
to show that an object is spherical one needs to compute 
the canonical morphism 
$$ E^\vee \otimes \pi^!_X(\mathcal{O}_X) \xrightarrow{\alpha}
E^\vee \otimes \pi^!_Z(\mathcal{L}_E) $$
described in Definition \ref{defn-canonical morphism-alpha-E} 
and show it to be an isomorphism. In all but few very 
simple examples computing this morphism directly, by computing 
the morphisms of the kernels underlying both terms 
of \eqref{eqn-canonical-morphism-of-left-adjoint-into-twisted-right-one}
and then composing them, is not a pleasant endeavour.  

Fortunately Lemma \ref{lemma-sphericity-condition-the-commutation-condition}
gives us a different characterisation of $\alpha$ by telling us that
$\alpha$ induces the unique natural transformation $\alpha_{FM}$ 
which makes the diagram
\begin{align} 
\label{eqn-commutation-condition-on-the-level-of-FM-transforms}
\vcenter{
\xymatrix{
\Phi^{\mathrm radj}_E \Phi_E  \ar^{\kappa_{FM}}[r] 
\ar_{\alpha_{FM}}[d] 
& F_E[1] \\
F_E[1] \Phi^{\mathrm ladj}_E \Phi_E 
\ar_<<<<<<<<{\quad\quad F_E[1](\Phi^{\mathrm ladj}_E \Phi_E \rightarrow \id)}[ur] 
&
}
}
\end{align}
commute. It follows by Lemma 
\ref{lemma-morphism-of-FM-kernels-induces-iso-of-FMs-iff-itis-an-iso-itself}
that showing $\alpha$ to be an isomorphism is 
equivalent to exhibiting some isomorphism 
$E^\vee \otimes \pi^!_X(\mathcal{O}_X) \xrightarrow{\sim} 
E^\vee \otimes \pi^!_Z(\mathcal{L}_E)$ which 
makes diagram \eqref{eqn-commutation-condition-on-the-level-of-FM-transforms}  
commute. 

\begin{prps}
\label{prps-getting-rid-of-canonical-map-alpha}
Let $E$ be a perfect object of $D(Z \times X)$ orthogonal over $Z$
and suppose that $\mathcal{L}_E$ is an invertible object of $D(Z)$.
Assume (for simplicity) that $Z$ is connected, 
then $\mathcal{L}_E = L[d]$ for some line bundle 
$L \in \picr Z$ and $d \in \mathbb{Z}$. 
Assume further that $d < 0$ or, more generally, that $d \neq 0, 1$ 
and 
$$ \ext_{Z \times Z}^d(\Delta_* \mathcal{O}_Z, \Delta_* L) = 0. $$

If $E^\vee \otimes \pi^!_X (\mathcal{O}_X) \xrightarrow{\sim}  
E^\vee \otimes \pi^!_Z (\mathcal{L}_E)$ then 
the canonical map $\alpha$ is an isomorphism. 
\end{prps}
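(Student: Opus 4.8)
The plan is to carry out the programme sketched just above the statement. By Proposition~\ref{prps-canonical-morphism-alpha-is-an-image-of-kappa} the canonical morphism $\alpha$ is, up to the chain of adjunction isomorphisms \eqref{eqn-isomorphisms-of-hom-spaces-linking-alpha-and-kappa}, the morphism $\kappa$ of the exact triangle \eqref{eqn-L_E-exact-triangle}; that chain identifies the $k$-vector space \eqref{eqn-homm-space-of-alpha} with \eqref{eqn-homm-space-of-kappa}, carrying $\alpha$ to $\kappa$. Hence it is enough to prove that \eqref{eqn-homm-space-of-kappa} is one-dimensional and that $\kappa$ is a non-zero element of it: then \eqref{eqn-homm-space-of-alpha} is one-dimensional and contains both the non-zero element $\alpha$ and, by assumption, an isomorphism $E^\vee\otimes\pi^!_X(\mathcal{O}_X)\xrightarrow{\sim}E^\vee\otimes\pi^!_Z(\mathcal{L}_E)$, so $\alpha$ is a non-zero scalar multiple of an isomorphism and is therefore itself an isomorphism.

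To establish these two facts I apply the cohomological functor $\homm_{D(Z\times Z)}(-,\Delta_*\mathcal{L}_E)$ to the triangle \eqref{eqn-L_E-exact-triangle}, rotated to $\Delta_*\mathcal{O}_Z \xrightarrow{u} Q \xrightarrow{\kappa} \Delta_*\mathcal{L}_E \xrightarrow{w} \Delta_*\mathcal{O}_Z[1]$, where $Q$ is the Fourier--Mukai kernel of $\Phi^{\mathrm radj}_E\Phi_E$. This produces the exact sequence
\begin{align*}
\ext^{d-1}_{Z\times Z}(\Delta_*\mathcal{O}_Z,\Delta_* L)
\xrightarrow{\;w^*\;}
\eend_{D(Z\times Z)}(\Delta_*\mathcal{L}_E)
\xrightarrow{\;\kappa^*\;}
\homm_{D(Z\times Z)}(Q,\Delta_*\mathcal{L}_E)
\xrightarrow{\;u^*\;}
\ext^{d}_{Z\times Z}(\Delta_*\mathcal{O}_Z,\Delta_* L).
\end{align*}
The rightmost group vanishes: automatically when $d<0$, since there are no negative $\ext$'s between the sheaves $\Delta_*\mathcal{O}_Z$ and $\Delta_* L$, and it is our standing hypothesis otherwise. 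Thus $u^*=0$ and $\kappa^*$ is surjective. Next, tensoring by the line bundle $\tilde{\pi}_2^* L$ is an autoequivalence of $D(Z\times Z)$ carrying $\Delta_*\mathcal{O}_Z$ to $\Delta_* L$, so $\eend_{D(Z\times Z)}(\Delta_*\mathcal{L}_E)=\eend_{D(Z\times Z)}(\Delta_* L)\cong\eend_{D(Z\times Z)}(\Delta_*\mathcal{O}_Z)$, and by the $(\Delta^*,\Delta_*)$-adjunction this last group equals $\homm_{D(Z)}(\Delta^*\Delta_*\mathcal{O}_Z,\mathcal{O}_Z)=H^0(Z,\mathcal{O}_Z)=k$, using that $Z$ is connected --- here $\Delta^*\Delta_*\mathcal{O}_Z$ lies in non-positive cohomological degrees with $\mathcal{H}^0=\mathcal{O}_Z$, so only its degree-zero part contributes.

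It remains to see that $\ker\kappa^*=\img w^*$ is zero. Being a subspace of the one-dimensional space $\eend_{D(Z\times Z)}(\Delta_*\mathcal{L}_E)$, $\img w^*$ is either $0$ or everything, and it is everything precisely when $\id_{\Delta_*\mathcal{L}_E}$ factors through $w$, that is, when $w\colon\Delta_*\mathcal{L}_E\to\Delta_*\mathcal{O}_Z[1]$ is a split monomorphism. In that case $\Delta_*\mathcal{L}_E=\Delta_* L[d]$ would be a direct summand of $\Delta_*\mathcal{O}_Z[1]$; but $\Delta_*\mathcal{O}_Z[1]$ is a shifted sheaf concentrated in cohomological degree $-1$, hence so is every one of its direct summands, which forces $d=1$, against our hypothesis. (The exclusion of $d=0$ is needed separately, to keep the hypothesis $\ext^d_{Z\times Z}(\Delta_*\mathcal{O}_Z,\Delta_* L)=0$ consistent with $\mathcal{L}_E$ being an honest, rather than shifted, line bundle.) Therefore $\img w^*=0$, $\kappa^*$ is an isomorphism, so \eqref{eqn-homm-space-of-kappa} is one-dimensional and $\kappa=\kappa^*(\id_{\Delta_*\mathcal{L}_E})\neq0$, which completes the argument.

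The steps I expect to require the most care are the two Hom-group identifications: that $\eend_{D(Z\times Z)}(\Delta_*\mathcal{O}_Z)$ collapses to $H^0(Z,\mathcal{O}_Z)=k$ on the connected scheme $Z$, and that the $\ext$-groups appearing in the long exact sequence are, after the twist by $\tilde{\pi}_2^* L$ and the bookkeeping of the shift by $d$, precisely the ones written above. Everything else --- in particular the passage back across the adjunction chain \eqref{eqn-isomorphisms-of-hom-spaces-linking-alpha-and-kappa} and the verification that the scalar-multiple argument applies --- is routine or already supplied by Proposition~\ref{prps-canonical-morphism-alpha-is-an-image-of-kappa}.
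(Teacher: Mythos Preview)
Your argument has a genuine gap: the assertion $H^0(Z,\mathcal{O}_Z)=k$ requires $Z$ to be proper, but the standing hypotheses of Section~\ref{section-orthogonally-spherical-objects} only ask that $Z$ be separated of finite type over $k$ with $\supp E$ proper over $Z$ and over $X$. For $Z=\mathbb{A}^1_k$, say, one has $H^0(Z,\mathcal{O}_Z)=k[t]$. In that generality your long exact sequence only identifies $\eend_{D(Z\times Z)}(\Delta_*\mathcal{L}_E)$ with $\Gamma(\mathcal{O}_Z)$, and both the dichotomy ``$\img w^*$ is $0$ or everything'' (on which your injectivity step rests) and the final ``$\alpha$ is a nonzero scalar multiple of an isomorphism'' collapse.

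The paper separates the two cases explicitly. For proper $Z$ it argues essentially as you do, using a spectral sequence rather than the long exact sequence to embed \eqref{eqn-homm-space-of-kappa} into $\Gamma(\mathcal{O}_Z)\simeq k$ with $\kappa\mapsto 1$; your route is arguably cleaner here. For non-proper $Z$ the paper uses the \emph{ring} structure on $\Gamma(\mathcal{O}_Z)$: the given isomorphism $\alpha'$ induces an isomorphism $Q\xrightarrow{\sim}Q'$ (where $Q'$ is the kernel of $F_E[1]\Phi^{\mathrm{ladj}}_E\Phi_E$), and composing with the natural morphism $Q'\to\Delta_*\mathcal{L}_E$ underlying the counit yields an element of \eqref{eqn-homm-space-of-kappa} whose image in $\Gamma(\mathcal{O}_Z)$ is some function $f$. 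Since $\kappa\mapsto 1$, the commutation condition of Lemma~\ref{lemma-sphericity-condition-the-commutation-condition} for $\alpha'$ reads $f=1$; hence $\alpha=\frac{1}{f}\,\alpha'$ as soon as $f$ is a unit. If $f(p)=0$ at some closed $p\in Z$, restricting to the fibre over $p$ shows the adjunction counit $\Phi^{\mathrm{ladj}}_E\Phi_E(\mathcal{O}_p)\to\mathcal{O}_p$ vanishes, hence $E_p=0$, hence $d_p=1$ by Proposition~\ref{prps-tfae-left-cotwist-is-an-equivalence}, contradicting $d\neq 1$. This fibrewise invertibility check is the missing ingredient in your proof.
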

\begin{proof}
Let $\alpha'$ denote some isomorphism 
$E^\vee \otimes \pi^!_X (\mathcal{O}_X) \xrightarrow{\sim}  
E^\vee \otimes \pi^!_Z (\mathcal{L}_E)$. 
By \cite[Theorem 3.1]{AnnoLogvinenko-OnTakingTwistsOfFourierMukaiFunctors}
the natural transformation 
$F_E[1]\Phi^{\mathrm ladj}_E\Phi_E \rightarrow F_E[1]$
is induced by the following morphism of Fourier-Mukai kernels:
 \begin{align} 
 \label{eqn-derived-restriction-morphism-L_E-version}
 \pi_{13 *}\left(\pi_{12}^* E \otimes 
 \pi_{23}^* (E \otimes \pi^!_Z(\mathcal{L}_E))\right) 
 \xrightarrow{\id \rightarrow \Delta_* \Delta^*}
 \Delta_* \Delta^* \pi_{13 *} \left( 
 \pi_{12}^* E \otimes \pi_{23}^* (E^\vee \otimes \pi^!_Z(\mathcal{L}_E))\right) 
 \\
 \label{eqn-two-squares-isomorphism-L_E-version}
 \Delta_* \Delta^* \pi_{13 *} 
 \left(\pi_{12}^* E \otimes 
 \pi_{23}^*(E^\vee \otimes \pi^!_Z(\mathcal{L}_E)\right)
 \xrightarrow{
 \Delta^*\pi_{13 *}(\pi^*_{12} \text{-} \otimes \pi^*_{13}\text{-})
 \;\simeq\; \pi_{Z*}(\text{-}\otimes\text{-})}
 \Delta_* \pi_{Z *}  
 \left(E \otimes (E^\vee \otimes \pi^!_Z(\mathcal{L}_E))\right)
 \\ 
 \label{eqn-E-E-dual-trace-morphism-L_E-version}
 \Delta_* \pi_{Z *}  
 \left(E \otimes (E^\vee \otimes \pi^!_Z(\mathcal{L}_E))\right) 
 \xrightarrow{E \otimes E^\vee \otimes \rightarrow \id}
 \Delta_* \pi_{Z *} \pi^!_Z(\mathcal{L}_E)  
 \\
 \label{eqn-rpi_1-trace-morphism-L_E-version}
 \Delta_* \pi_{Z *} \pi^!_Z(\mathcal{L}_E)
 \xrightarrow{\pi_{1 *} \pi^!_1 \rightarrow \id}
 \Delta_* \mathcal{L}_E.
 \end{align}
 
 Thus the analogue of 
\eqref{eqn-commutation-condition-on-the-level-of-FM-transforms}
for $\alpha'$ is induced by the following 
diagram of Fourier--Mukai kernel morphisms
 
 \begin{align} 
 \label{eqn-commutation-condition-on-the-morphism-alpha}
 \vcenter{
 \xymatrix{
 \pi_{13 *} \left(\pi_{12}^* E \otimes \pi_{23}^*(E^\vee \otimes
 \pi^!_X \mathcal{O}_{X})\right)
 \ar^<<<<<{\kappa}[r] 
 \ar_{\pi_{13 *} (\pi_{12}^* E \otimes \pi_{23}^*(\alpha'))}[d] 
 & \Delta_* \mathcal{L}_E \\
 \pi_{13 *} \left(\pi_{12}^* E \otimes \pi_{23}^*(E^\vee \otimes
 \pi^!_Z \mathcal{L}_E)\right)
 \ar_{\quad \quad
 \eqref{eqn-derived-restriction-morphism-L_E-version}-\eqref{eqn-rpi_1-trace-morphism-L_E-version}}[ur] &
 }
 }
 \end{align}
It is enough to show that $\alpha'$ can be chosen so that
\eqref{eqn-commutation-condition-on-the-morphism-alpha} commutes.

Denote by $Q$ the object 
$ \pi_{13 *} (\pi^*_{12} E \otimes 
\pi^*_{23}(E^\vee \otimes \pi^!_X(\mathcal{O}_X)))$. We have 
an exact triangle in $D(Z \times Z)$:
$$ \Delta_* \mathcal{O}_{Z} 
\xrightarrow{\eqref{eqn-pi1-unit-in-fm-right-adjunction-unit-morphism}-
\eqref{eqn-delta-unit-in-fm-right-adjunction-unit-morphism}}
Q
\xrightarrow{\kappa} 
\Delta_* L[d] $$
Denote by $\mathcal{\mathcal{H}}^i$ the functor of taking $i$-th cohomology of a
complex. Since $d \neq 0,1$, the associated long exact sequence of 
cohomologies shows that the complex $Q$ has exactly two 
non-zero cohomologies: $\Delta_* \mathcal{O}_Z$ in degree $0$ 
and $\Delta_* L$ in degree $-d$. More precisely, it shows that 
the morphisms
$$\Delta_* \mathcal{O}_Z \xrightarrow{\mathcal{H}^0
(\eqref{eqn-pi1-unit-in-fm-right-adjunction-unit-morphism}-\eqref{eqn-delta-unit-in-fm-right-adjunction-unit-morphism})}
\mathcal{H}^0(Q) \quad\text{ and }\quad 
\mathcal{H}^{-d}(Q) \xrightarrow{\mathcal{H}^{-d}(\kappa)} \Delta_* L $$
are isomorphisms. Use them from now on to identify the spaces involved.
Tautologically, the map 
\begin{align} \label{eqn-dth-cohomology-of-map-kappa}
\homm_{D(Z \times Z)}(Q, \Delta_* L[d]) 
\xrightarrow{\mathcal{H}^{-d}(\text{-})}
\homm_{Z \times Z}(\Delta_* L, \Delta_* L) 
\xrightarrow{\sim}
\homm_{Z}(L, L)
\xrightarrow{\sim}
\Gamma(\mathcal{O}_Z)
\end{align}
sends $\kappa$ to the element $1$ of $\Gamma(\mathcal{O}_Z)$. 

\vskip 0.75em
\em Claim: The map \eqref{eqn-dth-cohomology-of-map-kappa} is
injective.\rm 

\begin{proof}
Clearly it suffices to show that the map $\mathcal{H}^{-d}$ in 
\eqref{eqn-dth-cohomology-of-map-kappa} is an isomorphism. 
Choose an injective resolution $I^\bullet$ of $\Delta_*$. 
There is a standard spectral 
sequence associated to the filtration by columns of the total complex 
of the bicomplex $\homm(Q^\bullet, I^\bullet)$:
$$ E^{p,-q}_2 = \ext^p_{Z \times Z}(H^{q}(Q), \Delta_* L)
\quad \Rightarrow \quad 
E^{p-q}_\infty = \homm^{p-q}_{D(Z \times Z)}(Q, \Delta_* L). $$
We are interested in the space $\homm_{D(Z \times X)} (Q, \Delta_*
L[d])$ which is the limit $E^{d}_{\infty}$ of the above spectral
sequence. Since the complex $Q$ has cohomology only in two degrees, 
there are only two potentially non-zero terms $E^{p,-q}_2$ with $p - q = d$. 
These are $E_2^{0,-d} = \homm_{Z \times Z}(\Delta_* L, \Delta_* L)$
and $E_2^{d,0} = \ext^d_{Z \times Z}(\Delta_* \mathcal{O}_Z, 
\Delta_* L)$. The space $E_2^{d,0}$ is $0$ by assumption, so we have 
$E^{d}_\infty = E^{0,-d}_\infty$. But observe that 
there are no non-zero elements $E_2^{p,q}$
with $p < 0$, and therefore at every page of the spectral sequence 
the incoming differential $E^{-r, -d+r-1}_r \rightarrow E^{0,-d}_r$
will be zero. Thefore we have natural inclusions $E^{0,-d}_{r+1}
\hookrightarrow E^{0,-d}_r$ and the spectral sequence 
technology ensures that the natural map 
$$ \homm_{D(Z \times Z)}(Q, \Delta_* L [d])
\xrightarrow{\mathcal{H}^{-d}(\text{-})} E^{0,-d}_2 $$
lifts along each of these inclusions. Let $\beta$ denote
the map we obtain at the limit:
\begin{align}
\xymatrix{
\homm_{D(Z \times Z)}(Q, \Delta_* L [d]) 
\ar^<<<<<<<<<<{\mathcal{H}^{-d}(\text{-})}[rr] 
\ar[drr]_<<<<<<<<<<<<<<<{\beta}
& &
E^{0,-d}_2 \\
& & E^{0,-d}_\infty \ar@{^{(}->}[u]
}
\end{align}
Since $E^{0,-d}_\infty$ is the only surviving component of 
$E^{d}_\infty$ the map $\beta$ is an isomorphism, and the claim follows. 
\end{proof}

Let $Q'$ denote $\pi_{13 *} (\pi^*_{12} E \otimes 
\pi^*_{23}(E^\vee \otimes \pi^!_Z(\mathcal{L}_E)))$, 
let $\lambda$ denote the composition $Q 
\xrightarrow{ \alpha'}
Q' 
\xrightarrow{
\eqref{eqn-derived-restriction-morphism-L_E-version}-\eqref{eqn-rpi_1-trace-morphism-L_E-version}}
\Delta_* L[d]$ in diagram 
\eqref{eqn-commutation-condition-on-the-morphism-alpha}
and let $f \in \Gamma(\mathcal{O}_Z)$ be the image of $\lambda$ under 
map \eqref{eqn-dth-cohomology-of-map-kappa}. 
Since \eqref{eqn-dth-cohomology-of-map-kappa} is injective, 
$\lambda$ is then necessarily the composition
$Q \xrightarrow{\kappa} \Delta_* L [d] \xrightarrow{\pi_1^* f} \Delta_* L
[d]$. Thus showing that $\lambda = \kappa$, i.e. 
\eqref{eqn-commutation-condition-on-the-morphism-alpha} commutes, 
is equivalent to showing $f = 1$. In fact, it is enough to show that 
$f$ is invertible, as then scaling $\alpha$ by $\pi^*_{Z} \frac{1}{f}$ 
would scale $f$ to $1$.  

Suppose $f$ isn't invertible, then there exists
some closed point $p \in Z$ such that $f(p) = 0$. Let $\iota_p$ be
the inclusion $p \hookrightarrow Z$ and $\iota_{Z_p}$ to be the
corresponding inclusion $Z \xrightarrow{(p,-)} Z \times Z$. 
It is enough
to show that 
\begin{align}
\label{eqn-adjunction-counit-on-FM-kernel-level-pulled-back-to-Z_p}
\iota_{Z_p}^* \left(
Q' 
\xrightarrow{\eqref{eqn-derived-restriction-morphism-L_E-version}-\eqref{eqn-rpi_1-trace-morphism-L_E-version}}
\Delta_* L[d] \right)
\end{align}
is the zero map, as then 
$\Phi^{\mathrm ladj}_E \Phi_E(\mathcal{O}_p) \xrightarrow{\text{adj.
co-unit}} \mathcal{O}_p$ would also be a zero map, implying 
$E_p = \Phi_E(\mathcal{O}_p) = 0$. By 
Proposition \ref{prps-tfae-left-cotwist-is-an-equivalence} we would
then have $d = 1$ which contradicts our assumptions. 

Since $Q \xrightarrow{\lambda} \Delta_* L[d]$ is a composition
$Q' \xrightarrow{\eqref{eqn-derived-restriction-morphism-L_E-version}-\eqref{eqn-rpi_1-trace-morphism-L_E-version}}
\Delta_* L[d]$ and an isomorphism, it suffices to show that  
$\iota_{Z_p}^* (Q \xrightarrow{\lambda} \Delta_* L[d])$ 
is the zero map. By adjunction this is equivalent to 
the following composition vanishing:
$$
Q \xrightarrow{\lambda} \Delta_* L[d] \xrightarrow{\text{ adj. unit}} 
\iota_{Z_p *} \iota_{Z_p}^* 
\Delta_* L[d] = \mathcal{O}_{p,p}[d] $$
This holds since $\lambda$ decomposes as 
$Q \xrightarrow{\kappa} \Delta_* L [d] \xrightarrow{\pi^*_1 f }
\Delta_* L [d]$ and $\pi_1^* f(p,p) = f(p) = 0$. 
\end{proof}

Together with Lemma 
\ref{lemma-d_p-less-than-zero-morphism-pi_X*-E-to-E_p-doesnt-vanish}
this allows us to significantly strengthen the ``if'' direction of 
Thorem \ref{theorem-sphericity-for-orthogonal-objects-of-ZxX}  
when the integer $d_p$ is everywhere negative. Note that by Lemma 
\ref{prps-the-shift-of-L_E-is-the-difference-in-dimensions}
all objects spherical over $Z$ necessarily have $d_p < 0$ if 
$\dim Z <  \dim X$ and $Z$ and $X$ are not too degenerate. 
\begin{theorem}
\label{theorem-criterion-for-sphericity-when-d_p-negative}
Let $X$ and $Z$ be two separable schemes of finite type over $k$.
Let $E$ be a perfect object of $D(Z \times X)$ orthogonal over $Z$. 
Then $E$ is spherical over $Z$ if 
\begin{enumerate}
\item
For every closed point $p \in Z$ such that the fibre $E_p$
is non-zero  
$$ \rder\homm_{D(X)}(\pi_{X *} E, E_p) = k \oplus k[d_p]
\quad\text{ for some } d_p \in \mathbb{Z}_{<0}.$$
\item 
$E^\vee \otimes \pi^!_X(\mathcal{O}_X) \simeq 
E^\vee \otimes \pi^!_{Z}(\mathcal{L}_E)$.
\end{enumerate}
\end{theorem}

\section{Spherical fibrations}
\label{section-spherical-fibrations}

The results of Section
\ref{section-orthogonally-spherical-objects}
are rather general and category-theoretic owing to a rather general
and category-theoretic nature of the objects it considers: arbitrary 
complexes in the derived category of the fibre product $Z \times X$. 
We now choose to restrict ourselves to a more geometric setup 
and study what our results imply in that context.  

Firstly, we assume $Z$ and $X$ to be abstract varieties. Previously 
we have assumed them to only be separated schemes of finite type over
$k$, now we assume them to also be reduced and irreducible. Strictly
speaking, neither assumption is essential for what we prove below.  
Omitting them, however, would make the arguments more technically 
involved and the results less clear. 

Secondly, and more importantly, we restrict the range of objects 
we consider from arbitrary complexes in $D(Z \times X)$ to 
subschemes of $X$ flatly fibred over $Z$.

\subsection{Flat and perfect fibrations with proper fibres}

\begin{defn}
A \em flat fibration $W$ in $X$ over $Z$ \rm is 
a scheme $W$ equipped with a closed immersion 
$\xi\colon W \hookrightarrow X$ and a flat surjective map $\pi\colon W
\rightarrow Z$. For any closed point $p \in Z$ we denote 
by $W_p$ the set-theoretic fibre of $W$ over $p$: 
\begin{align}
\label{eqn-the-fibre-square-for-W-over-Z}
\vcenter{
\xymatrix{
W_p\; \ar[d]_{\pi_{k}} \ar@{^{(}->}[r]^>>>>>>{\iota_{Wp}} &
W\; \ar[d]^{\pi} \ar@{^{(}->}[r]^{\xi} & X
\\
\spec k \; \ar@{^{(}->}[r]_>>>>>{\iota_{p}} & Z & 
}
}
\end{align}
\end{defn}

Denote by $\iota_W$ the map $W \hookrightarrow Z \times X$ given 
by the product of $\pi$ and $\iota$.
We have $\xi = \pi_X \circ \iota_W$ and $\pi = \pi_Z \circ \iota_W$. 
Denote by $\xi_p$ the composition $\xi \circ \iota_{W_p}$, it is 
the inclusion of the fibre $W_p$ into $X$. Let $E$ denote the object
$\iota_{W *} \mathcal{O}_W$ of $D(Z \times X)$. We think of 
this object as representing $W$ in the derived category 
$D(Z \times X)$. 

The flatness of $W$ over $Z$ ensures that
the category-theoretic notion of a fibre considered in the Section 
\ref{section-orthogonally-spherical-objects} coincides for $W$
with the usual set-theoretic one:
\begin{lemma} 
\label{lemma-fibres-of-O_W-in-Z-x-X-are-precisely-O_Wp}
Let $W$ be a flat fibration in $X$ over $Z$, let $\mathcal{E}$
be an object of $D(W)$ and let $E = \iota_{W *}
\mathcal{E}$ be the corresponding object in $D(Z \times X)$. 
For any closed point $p \in Z$ denote by $\mathcal{E}_p$
the fibre $\iota^*_{W_p} \mathcal{E}$. We have
\begin{align*}
E_p \simeq \xi_{p *} \mathcal{E}_p 
\end{align*}
as objects of $D(X)$. In particular, when $\mathcal{E} =
\mathcal{O}_W$ we have
\begin{align*}
E_p \simeq \xi_{p *} \mathcal{O}_{W_p} .
\end{align*}
\end{lemma}
\begin{proof}
The fibre square in 
the diagram \eqref{eqn-the-fibre-square-for-W-over-Z} 
decomposes into two fibre squares:
\begin{align}
\label{eqn-decomposition-of-the-fibre-square-for-W-over-Z}
\vcenter{
\xymatrix{
W_p\; \ar@{^{(}->}[d]_{\xi_{p}} \ar@{^{(}->}[r]^>>>>>>>{\iota_{W_p}} &
W\; \ar@{^{(}->}[d]^{\iota_W} \ar@{^{(}->}[dr]^{\xi} & 
\\
X\; \ar[d]_{\pi_{k}} \ar@{^{(}->}[r]^>>>>>>>{\iota_{Xp}} &
Z \times X \ar[d]^{\pi_Z} \ar[r]_>>>>>>{\pi_X} & X
\\
\spec k \; \ar@{^{(}->}[r]_>>>>>>>{\iota_{p}} & Z & 
}
}
\end{align}
The fibre $E_p$ was defined to be 
the object $\iota^*_{X_p} E$ of $D(X)$. We have therefore 
$E_p = \iota^*_{X_p} \iota_{W *} \mathcal{E}$. 
Consider the base change map 
\begin{align}
\label{eqn-base-change-map-for-D-in-ZxX}
\iota^*_{X_p} \iota_{W_*} \rightarrow \xi_{p *} \iota^*_{W_p} 
\end{align}
for the top fibre square in the diagram  
\eqref{eqn-decomposition-of-the-fibre-square-for-W-over-Z}. Applying
it to $\mathcal{E}$ yields a morphism  
$$ E_p \rightarrow \xi_{p *} \mathcal{E}_p.$$
To show \eqref{eqn-base-change-map-for-D-in-ZxX}
to be an isomorphism it suffices to prove 
that the top fibre square in 
\eqref{eqn-decomposition-of-the-fibre-square-for-W-over-Z} 
is independent 
in the sense of 
\cite{Lipman-NotesOnDerivedFunctorsAndGrothendieckDuality}, \S3.10. 
This follows via \cite[Prop. 3.2.9]{Wiebel-aItHA} from $\pi_Z$
and $\pi = \pi_Z \circ \iota_W$ both being flat. 
\end{proof}

In particular, this makes it clear that $\iota_{W*} \mathcal{O}_W$
is an object of $D(Z \times X)$ which is orthogonal over $Z$. 
For any two distinct points $p$ and $q$ of $Z$ the fibres $W_p$ and
$W_q$ are disjoint in $X$ and therefore all Hom's between
$\xi_{p *} \mathcal{O}_{W_p}$ and $\xi_{q *} \mathcal{O}_{W_q}$
vanish.

In Section \ref{section-orthogonally-spherical-objects} we had
to make two technical assumptions on the object $E$ of $D(Z \times X)$. 
These were necessary for 
the adjoints of the Fourier--Mukai transform $\Phi_E$ to exist and 
to behave in a sensible way. The first assumption was that the support of $E$
is proper over $Z$. The support of $\iota_{W *} \mathcal{O}_W$ in 
$Z \times X$ is the image of $W$ under $\iota_W$, so this assumption is
equivalent to saying that the fibration morphism 
$\pi\colon W \rightarrow Z$ is proper. And $\pi$ being proper is 
equivalent to all the fibres of $W$ over closed points of $Z$ 
being schemes proper over $k$, cf. 
\cite[\em Corolaire 5.4.5\rm]{Grothendieck-EGA-II}.   

The second assumption was that $E$ is a perfect object of $D(Z \times X)$. 
This corresponds to $\iota_{W *} \mathcal{O}_W$ being perfect. We call
a fibration $W$ \em perfect \rm if this holds. Since $\pi$
is flat this condition can also be checked fibrewise: 
\begin{lemma}
\label{lemma-fibrewise-condition-for-perfection-of-O_W-in-ZxX}
Let $W$ be a flat fibration in $X$ over $Z$. Then 
it is perfect if and
only if for every closed $p \in Z$ the object $\xi_{p *} \mathcal{O}_{W_p}$
is perfect in $D(X)$. 
\end{lemma}
\begin{proof}
We first claim that $\iota_{W *} \mathcal{O}_W$ is perfect relative
to $\pi_Z\colon Z \times X \rightarrow Z$. There is  
a commutative diagram 
\begin{align}
\vcenter{
\xymatrix{
W\; \ar[d]_{\pi} \ar@{^{(}->}[r]^<<<<{\iota_W} & Z \times X \ar[dl]^{\pi_Z} \\
Z
}
}
\end{align}
and since $\iota_W$ is a closed immersion, and therefore proper, 
it takes $\pi$-perfect object to $\pi_Z$-perfect objects, cf.
\cite[Prop. 4.8]{IllusieConditionsDeFinitudeRelative}.
Since $\pi$ is flat the structure sheaf $\mathcal{O}_W$ 
is $\pi$-flat and therefore most certainly $\pi$-perfect. We conclude
that $\iota_{W *} \mathcal{O}_W$ is $\pi_Z$-perfect. 

By the fibrewise criterion for perfection 
\cite[\em Corollaire 4.6.1\rm]{IllusieConditionsDeFinitudeRelative}
an object of $D(Z \times X)$ is globally perfect if and only if 
it is $\pi_Z$-perfect and its fibre over every closed point of $Z$
is globally perfect. By Lemma 
\ref{lemma-fibres-of-O_W-in-Z-x-X-are-precisely-O_Wp}
the fibre of $\iota_{W *} \mathcal{O}_W$ over any closed $p \in Z$ is 
$\xi_{p *} \mathcal{O}_{W_p}$. The claim now follows. 
\end{proof}

The following are the two typical situations in which $\iota_{W*}
\mathcal{O}_W$ would be perfect in $D(Z \times X)$:
\begin{cor}
Let $W$ be a flat fibration in $X$ over $Z$. Any one of the
following conditions is sufficient for $W$ to be perfect:
\begin{enumerate}
\item $X$ is smooth. 
\item $Z$ is smooth and $\xi\colon W \hookrightarrow X$ is a regular
immersion. 
\end{enumerate}
\end{cor}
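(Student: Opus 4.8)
The plan is to reduce both assertions to local statements about finite projective dimension, handling the two cases by genuinely different routes.

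For condition (1) I would invoke Lemma~\ref{lemma-fibrewise-condition-for-perfection-of-O_D-in-ZxX}: it suffices to check that $\xi_{p\,*}\mathcal{O}_{D_p}$ is a perfect object of $D(X)$ for every closed point $p\in Z$. When $X$ is smooth, each local ring $\mathcal{O}_{X,x}$ is regular of finite Krull dimension, hence of finite global dimension, so every coherent $\mathcal{O}_X$-module admits, locally, a finite resolution by free modules of finite rank; in particular the coherent sheaf $\xi_{p\,*}\mathcal{O}_{D_p}$ is perfect, and we are done. The only point to watch is that we do \emph{not} claim $Z\times X$ itself to be regular — it is not, unless $Z$ is — which is exactly why the argument must be routed through the fibre-wise criterion rather than argued directly on $Z\times X$.

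For condition (2) I would instead show directly that the closed immersion $\iota_D\colon D\hookrightarrow Z\times X$ is a \emph{regular} immersion; then $\iota_{D\,*}\mathcal{O}_D$ is perfect because, locally, the ideal of $D$ is generated by a regular sequence and the associated Koszul complex is a finite free resolution. To prove regularity of $\iota_D$, use the factorisation $D\xrightarrow{\;\iota_D\;}Z\times X\xrightarrow{\;\pi_X\;}X$ whose composite is $\xi$. Since $Z$ is smooth over $k$ (equivalently, as $\chr k=0$, regular), the projection $\pi_X$ is a smooth morphism, being the base change of $Z\to\spec k$ along $X\to\spec k$. Now a standard two-out-of-three statement for regular immersions applies: \'etale-locally on $Z\times X$ the map $\pi_X$ is modelled by $\mathbb{A}^{n}_X\to X$ with $n=\dim Z$, and a closed subscheme of $\mathbb{A}^{n}_X$ whose projection to $X$ is the vanishing locus of a regular sequence $g_1,\dots,g_c$ is cut out in $\mathbb{A}^{n}_X$ by $t_1-h_1,\dots,t_n-h_n,g_1,\dots,g_c$ for suitable lifts $h_i$, which is again a regular sequence. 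As regularity of an immersion is \'etale-local, this gives that $\iota_D$ is a regular immersion.

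The main obstacle, such as it is, is getting the two-out-of-three step in case (2) phrased with the right smoothness hypothesis and justifying the reduction to the local model $\mathbb{A}^n_X\to X$; everything else is formal bookkeeping. I would also note that a fibre-wise argument is \emph{not} available for (2): since $X$ need not be smooth and $\xi_p\colon D_p\hookrightarrow X$ need not be a regular immersion even when $\xi$ is, there is no easy direct route to perfection of $\xi_{p\,*}\mathcal{O}_{D_p}$, which is why (1) and (2) must be treated by different arguments.
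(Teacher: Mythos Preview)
Your treatment of (1) matches the paper. For (2) you take a genuinely different route: you aim to show that $\iota_D\colon D\hookrightarrow Z\times X$ is itself a regular immersion, whence $\iota_{D*}\mathcal{O}_D$ is perfect via the Koszul resolution, bypassing the fibre-wise criterion. The paper, by contrast, uses the fibre-wise criterion in both cases and shows that each $\xi_p$ is a regular immersion. Your approach is more direct; the paper's is more uniform.

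Two corrections. First, your \'etale-local reduction is not quite right as written: after replacing $Z\times X$ by an \'etale neighbourhood $V\simeq\mathbb{A}^n_U$, the induced map $D\times_{Z\times X}V\to U$ need no longer be a closed immersion, so the description ``cut out by $t_1-h_1,\dots,t_n-h_n,g_1,\dots,g_c$'' is not justified. Your conclusion is nonetheless correct; a cleaner way to reach it is to factor
\[
\iota_D\colon\; D\xrightarrow{\;\Gamma_\pi\;} Z\times D\xrightarrow{\;\id_Z\times\xi\;} Z\times X,
\]
where $\Gamma_\pi$ is the graph of $\pi$. The second map is a base change of the regular immersion $\xi$; the first is a section of the smooth projection $Z\times D\to D$ (smooth since $Z$ is), hence a regular immersion.

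Second, and more importantly, your final remark is wrong. A fibre-wise argument \emph{is} available for (2), and this is exactly what the paper does. The point you missed is that $\iota_{D_p}\colon D_p\hookrightarrow D$ is the base change of $\iota_p\colon\spec k\hookrightarrow Z$ along the flat map $\pi$, and flat base change preserves regular immersions. Smoothness of $Z$ makes $\iota_p$ regular, hence $\iota_{D_p}$ is regular, and then $\xi_p=\xi\circ\iota_{D_p}$ is a composite of two regular immersions.
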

\begin{proof}
By Lemma \ref{lemma-fibrewise-condition-for-perfection-of-O_W-in-ZxX}
it suffices to prove that for every closed $p \in Z$ the object
$\xi_{p *} \mathcal{O}_{W_p}$ is perfect in $D(X)$. 

If $X$ is smooth, then every object of $D(X)$ is perfect and the 
claim follows trivially.

Suppose that $Z$ is smooth and $\xi$ is a regular immersion. 
To prove that $\xi_{p *} \mathcal{O}_{W_p}$ is perfect in $D(X)$
it suffices to prove that $\xi_p$ is a regular immersion. 
A regular immersion is both proper and perfect and hence 
takes perfect objects to perfect objects,
cf. \cite[\em Corollaire 4.8.1\rm]{IllusieConditionsDeFinitudeRelative}. 

Recall the commutative diagram 
$\eqref{eqn-the-fibre-square-for-W-over-Z}$. Smoothness of $Z$ 
is equivalent to $\iota_p$ being a regular immersion for
every closed point $p$ of $Z$. Since $\pi$ is faithfully flat, 
$\iota_p$ is regular if and only if 
$\iota_{W_p}$ is regular. Since
$\xi_p$ is the composition 
$$ W_p \xrightarrow{\iota_{W_p} } W \xrightarrow{\xi} X $$
and since a composition of two regular immersions is again a regular
immersion, we conclude that $\xi_p$ is regular
for every closed $p \in \mathbb{Z}$.
\end{proof}

Thus we arrive at the class of objects we want to work with:
flat and perfect fibrations in $X$ over $Z$ with proper fibres. For such
fibrations the results of Section \ref{section-orthogonally-spherical-objects} 
can be restated in a more natural way and improved upon. Our goal is 
to give a satisfying description of the following:
\begin{defn}
Let $W$ be a flat and perfect fibration in $X$ over $Z$ with proper fibres. 
We say that $W$ is a \em spherical fibration \rm if the object 
$E = \iota_{W *} \mathcal{O}_W$ is spherical over 
$Z$ in $D(Z \times X)$. 
\end{defn}

So let $W$ be a flat and perfect fibration in $X$ with proper fibres
and let $E = \iota_{W *} \mathcal{O}_W$ be the corresponding object in $D(Z
\times X)$. Recall that the co-twist $F_E$ of the Fourier--Mukai transform
$\Phi_E$ was defined as the cone of the adjunction unit 
$\id_{D(Z)} \rightarrow \Phi_E \Phi^{\mathrm radj}_E$ and that the first of
the two conditions for $E$ to be spherical was for $F_E$ to be 
an autoequivalence of $D(Z)$. 

Denote by $\mathcal{L}_W$ the object $\mathcal{L}_E$ of $D(Z)$.
It was defined in 
Defn.~\ref{defn-object-L_E-for-abstract-objects-of-D-ZxX}
as the cone of a certain composition \eqref{eqn-shhom-pipiE-E-map} of 
morphisms in $D(Z)$. This composition was later shown in Lemma
\ref{lemma-pullback-of-L-E-morphism-is-iso-to-pushdown-of-the-adjunction}
to be the pushdown from $Z \times Z$ to $Z$ via
$\tilde{\pi}_{1 *}$ of the composition  
\eqref{eqn-pi1-unit-in-fm-right-adjunction-unit-morphism}-\eqref{eqn-delta-unit-in-fm-right-adjunction-unit-morphism}.
Recall that 
\eqref{eqn-pi1-unit-in-fm-right-adjunction-unit-morphism}-\eqref{eqn-delta-unit-in-fm-right-adjunction-unit-morphism}
is the morphism of the Fourier--Mukai kernels which induces 
the adjunction unit 
$\id_{D(Z)} \rightarrow \Phi_E \Phi^{\mathrm radj}_E$. In \S4
of \cite{AnnoLogvinenko-OnTakingTwistsOfFourierMukaiFunctors}
we have demonstrated that whenever the object $E$ of $D(Z \times X)$
is a pushforward of an object from some closed subscheme of $Z \times X$, 
as is the case here, there exists a better, more economical
decomposition of this morphism of Fourier--Mukai kernels than 
\eqref{eqn-pi1-unit-in-fm-right-adjunction-unit-morphism}-\eqref{eqn-delta-unit-in-fm-right-adjunction-unit-morphism}. 
A pushdown of this more economical decomposition to $Z$
via $\tilde{\pi}_1$ could be expected to produce a better description of 
the defining morphism of $\mathcal{L}_W$ than the composition 
\eqref{eqn-shhom-pipiE-E-map}. For the sake of simplicity we choose 
to state this better description directly and prove directly that it is
isomorphic to the composition \eqref{eqn-shhom-pipiE-E-map}. An
interested reader could check that dualising the composition 
in \cite[Theorem 4.2]{AnnoLogvinenko-OnTakingTwistsOfFourierMukaiFunctors}
as described in Section \ref{section-adjunction-units-and-fm-transforms}
of this paper and applying $\tilde{\pi}_{1 *}$ would yield
the following:

\begin{prps}
\label{prps-alternative-description-of-L_W}

Let $W$ be a flat and perfect fibration in $X$ with proper fibres and let
$E = \iota_{W *} \mathcal{O}_W$ be the corresponding object in $D(Z
\times X)$.

Then the composition \eqref{eqn-shhom-pipiE-E-map}
\begin{align*}
\mathcal{O}_{Z} \xrightarrow{\id \rightarrow \pi_{Z *} \pi^*_{Z}}
\pi_{Z *} \mathcal{O}_{Z \times X} 
\xrightarrow{\id \rightarrow \rder\shhomm(E, E \otimes -)}
 \pi_{Z *}\rder\shhomm_{Z \times X}(E, E)
\xrightarrow{(\pi^*_X \pi_{X *} \rightarrow \id)^\text{opp}}
\pi_{Z *}\rder\shhomm_{Z \times X}(\pi^*_X \pi_{X *} E, E)
\end{align*}
is isomorphic to 
\begin{align}
\label{eqn-the-defining-morphism-of-L_W}
\mathcal{O}_Z \xrightarrow{\id \rightarrow \pi_* \pi^*} 
\pi_* \mathcal{O}_W 
\xrightarrow{(\xi^* \xi_* \rightarrow \id)^{\text{opp}}} 
\pi_* (\xi^* \xi_* \mathcal{O}_W)^\vee. 
\end{align}
In particular, the object $\mathcal{L}_W$ is isomorphic to the cone of 
\eqref{eqn-the-defining-morphism-of-L_W}.
\end{prps}
\begin{proof}
We have $\pi = \pi_Z \circ \iota_W$ and $\xi = \pi_X \circ \iota_W$. 
Decomposing $\id \rightarrow \pi_* \pi^*$ 
as $\id \rightarrow \pi_{Z *} \pi^*_Z 
\rightarrow \pi_{Z *} \iota_{W *} \iota^*_W \pi^*_Z$
we see that   
\eqref{eqn-the-defining-morphism-of-L_W} is the composition of 
$\id \rightarrow \pi_{Z *} \pi^*_Z$ with the image under 
$\pi_{Z *}$ of 
\begin{align*}
\mathcal{O}_{Z \times X} 
\xrightarrow{\id \rightarrow \iota_{W *} \iota^*_W}
\iota_{W *} \mathcal{O}_{W}
\xrightarrow{\gamma(\mathcal{O}_W)} 
\iota_{W *}
\rder\shhomm_{W}(\mathcal{O}_{W}, \mathcal{O}_{W}) 
\xrightarrow{(\xi^* \xi_* \rightarrow \id)^{\text{opp}}} 
\iota_{W *}
\rder\shhomm_{W}(\iota^*_W \pi^*_X \pi_{X *} \iota_{W *}
\mathcal{O}_{W}, \mathcal{O}_{W})
\end{align*}
where given an object $A$  
we denote by $\gamma(A)$ 
the adjunction unit $\id \rightarrow \rder\shhomm(A, A \otimes \text{-})$. 
On the other hand \eqref{eqn-shhom-pipiE-E-map} is the composition 
of 
$\id \rightarrow \pi_{Z *} \pi^*_Z$ with the image under $\pi_{Z *}$ of 
\begin{align*}
\mathcal{O}_{Z \times X} 
\xrightarrow{\gamma(\iota_{W *} \mathcal{O}_W)}
\rder\shhomm_{Z \times X}(\iota_{W *} \mathcal{O}_W, \iota_{W *}
\mathcal{O}_W) 
\xrightarrow{(\pi^*_X \pi_{X *} \rightarrow \id)^{\text{opp}}}  
\rder\shhomm_{Z \times X}(\pi^*_X \pi_{X *} \iota_{W *} \mathcal{O}_W,
\iota_{W *} \mathcal{O}_W). 
\end{align*}
We claim that these two compositions are identified by 
$$
\iota_{W *}
\rder\shhomm_{W}(\iota^*_W \pi^*_X \pi_{X *} \iota_{W *}
\mathcal{O}_{W}, \mathcal{O}_{W})
\xrightarrow{\alpha(\iota_W)}
\rder\shhomm_{Z \times X}(\pi^*_X \pi_{X *} \iota_{W *} \mathcal{O}_W,
\iota_{W *} \mathcal{O}_W)
$$
where $\alpha(\iota_W)$ is the natural bifunctorial isomorphism 
$\iota_{W *} \rder\shhomm(\iota^*_W\text{-},\text{-}) \rightarrow 
\rder\shhomm(\text{-},\iota_{W *}\text{-})$. 

Dualizing \cite[Prop. 4.1]{AnnoLogvinenko-OnTakingTwistsOfFourierMukaiFunctors}
under the relative duality $D_{\bullet/X}$ 
(where $X$ is in the notation of loc. cit.)
we see that the morphism 
$$ 
\mathcal{O}_{Z \times X} 
\xrightarrow{\gamma(\iota_{W *} \mathcal{O}_W)}
\rder\shhomm_{Z \times X}(\iota_{W *} \mathcal{O}_W, \iota_{W *} \mathcal{O}_W) 
$$
equals the morphism 
$$
\mathcal{O}_{Z \times X} 
\xrightarrow{\id \rightarrow \iota_{W *} \iota^*_W}
\iota_{W *} \mathcal{O}_W 
\xrightarrow{\gamma(\mathcal{O}_W)}
\iota_{W *} \rder\shhomm_{W}(\mathcal{O}_{W}, \mathcal{O}_{W}) 
\xrightarrow{\beta(\iota_W)}
\rder\shhomm_{Z \times X}(\iota_{W *} \mathcal{O}_W, \iota_{W *} \mathcal{O}_W) 
$$
where given a scheme map $f$ we denote by $\beta(f)$ the natural morphism 
$f_* \rder\shhomm(,) \rightarrow
\rder\shhomm(f_*,f_*)$. 

It remains to establish the commutativity of the diagram 
\begin{tiny}
\begin{align*}
\xymatrix{
\iota_{W *}
\rder\shhomm_{W}(\mathcal{O}_{W}, \mathcal{O}_{W})  
\ar[rr]^{(\iota^*_W \iota_{W*} \rightarrow \id)^{\text{opp}}}
\ar[d]^{\beta(\iota_W)}
& &
\iota_{W *} \rder\shhomm_{W}(\iota^*_W \iota_{W *} \mathcal{O}_W, \mathcal{O}_W) 
\ar[rr]^{(\pi^*_X \pi_{X *} \rightarrow \id)^{\text{opp}}}
& &
\iota_{W *} \rder\shhomm_{W}(\iota^*_W \pi^*_X \pi_{X *} \iota_{W *} \mathcal{O}_W, \mathcal{O}_W) 
\ar[d]_{\alpha(\iota_W)}
\\
\rder\shhomm_{Z \times X}(\iota_{W *} \mathcal{O}_{W}, \iota_{W *}
\mathcal{O}_{W})  
\ar[rrrr]_{(\pi^*_X \pi_{X *} \rightarrow \id)^{\text{opp}}}
& & & &
\rder\shhomm_{Z \times X}(\pi^*_X \pi_{X *} \iota_{W *} \mathcal{O}_W,
\iota_{W *} \mathcal{O}_W) 
}
\end{align*}
\end{tiny}
By the functoriality of $\alpha(\iota_W)$ it suffices to show that
the diagram 
\begin{align}
\label{eqn-alpha-beta-diagram}
\vcenter{
\xymatrix{
\iota_{W *}
\rder\shhomm_{W}(\mathcal{O}_{W}, \mathcal{O}_{W})  
\ar[rr]^{(\iota^*_W \iota_{W*} \rightarrow \id)^{\text{opp}}}
\ar[d]_{\beta(\iota_W)}
 & &
\iota_{W *} \rder\shhomm_{W}(\iota^*_W \iota_{W *} \mathcal{O}_{W}, \mathcal{O}_{W}) 
\ar[dll]^{\alpha(\iota_W)} \\
\rder\shhomm_{Z \times X}(\iota_{W *} \mathcal{O}_{W}, \iota_{W *}
\mathcal{O}_{W}) & & 
}
}
\end{align}
commutes. But the isomorphism $\alpha(\iota_W)$ was defined as the 
composition 
$$
\iota_{W *} \rder\shhomm(\iota^*_W\text{-},\text{-})
\xrightarrow{\beta(\iota_W)} 
\rder\shhomm(\iota_{W *} \iota^*_W \text{-},\iota_{W *}\text{-})
\xrightarrow{(\id \rightarrow \iota_{W*} \iota^*_W)^\text{opp}}
\rder\shhomm(\text{-},\iota_{W *}\text{-})
$$ and therefore we can re-write the diagram \eqref{eqn-alpha-beta-diagram} as 
\begin{align*}
\xymatrix{
\iota_{W *}
\rder\shhomm_{W}(\mathcal{O}_{W}, \mathcal{O}_{W})  
\ar[rr]^{(\iota^*_W \iota_{W*} \rightarrow \id)^{\text{opp}}}
\ar[d]_{\beta(\iota_W)}
 & &
\iota_{W *} \rder\shhomm_{W}(\iota^*_W \iota_{W *} \mathcal{O}_{W}, \mathcal{O}_{W}) 
\ar[d]^{\beta(\iota_W)} \\
\rder\shhomm_{Z \times X}(\iota_{W *} \mathcal{O}_{W}, \iota_{W *}
\mathcal{O}_{W}) & & 
\rder\shhomm_{Z \times X}(\iota_{W *} \iota^*_W \iota_{W *} \mathcal{O}_{W}, \iota_{W *} \mathcal{O}_{W})
\ar[ll]_{(\id \rightarrow \iota_{W*} \iota^*_W)^\text{opp}}
}
\end{align*}
By the functoriality of $\beta(\iota_W)$ it remains only to check that 
the diagram
\begin{align*}
\xymatrix{
\iota_{W *}
\rder\shhomm_{W}(\mathcal{O}_{W}, \mathcal{O}_{W})  
\ar[rr]_{\beta(\iota_W)}
\ar[d]_{\beta(\iota_W)}
 & &
\rder\shhomm_{Z \times X}(\iota_{W *} \mathcal{O}_{W}, \iota_{W *} \mathcal{O}_{W}) 
\ar[d]^{(\iota^*_W \iota_{W*} \rightarrow \id)^{\text{opp}}}
\\
\rder\shhomm_{Z \times X}(\iota_{W *} \mathcal{O}_{W}, \iota_{W *} \mathcal{O}_{W}) 
& & 
\rder\shhomm_{Z \times X}(\iota_{W *} \iota^*_W \iota_{W *} \mathcal{O}_{W}, \iota_{W *} \mathcal{O}_{W})
\ar[ll]_{(\id \rightarrow \iota_{W*} \iota^*_W)^\text{opp}}
}
\end{align*}
commutes, which follows from 
\begin{align*}
\iota_{W*} \mathcal{O}_W
\xrightarrow{\id \rightarrow \iota_{W*} \iota^*_W}
\iota_{W *} \iota^*_W \iota_{W*} \mathcal{O}_W
\xrightarrow{\iota^*_W \iota_{W*} \rightarrow \id} 
\iota_{W*} \mathcal{O}_W
\end{align*}
being an identity morphism. 
\end{proof}

Next we give an analogue of Proposition 
\ref{prps-tfae-left-cotwist-is-an-equivalence}:

\begin{prps}
\label{prps-tfae-left-cotwist-is-an-equivalence-for-fibrations}
Let $W$ be a flat and perfect fibration in $X$ over $Z$ with proper
fibres. The following are equivalent:
\begin{enumerate}
\item
\label{item-sphericity-condition-on-exts-for-fibres-for-fibrations}
There exists $d \in \mathbb{Z}$ such that for every closed point 
$p \in Z$ we have 
$$ \rder\homm_{D(X)}(\xi_{*} \mathcal{O}_W, \xi_{p *}
\mathcal{O}_{W_p}) = k \oplus k[d].$$

\item \label{item-invertibility-of-L_W-for-fibrations}
We have $\mathcal{L}_W \simeq L[d]$ for some $L \in \picr Z$ and 
$d \in Z$. 
\item \label{item-the-left-co-twist-is-an-equivalence-for-fibrations}
The co-twist $F_E$ is an autoequivalence of $D(Z)$. 
\end{enumerate}
When the conditions above are satisfied $F_E \simeq (\text{-}) \otimes 
\mathcal{L}_W[-1]$ and the integers $d$ in
$(\ref{item-sphericity-condition-on-exts-for-fibres-for-fibrations})$  
are
$(\ref{item-invertibility-of-L_W-for-fibrations})$ are equal.
\end{prps}

\begin{proof}

Since $Z$ is connected
any invertible object of $D(Z)$ is
a shift of line bundle \cite[Theorem 1.5.2]{AvramovIyengarLipman-ReflexivityAndRigidityForComplexesIISchemes}.
Thus our conditions \eqref{item-invertibility-of-L_W-for-fibrations}
and \eqref{item-the-left-co-twist-is-an-equivalence-for-fibrations}
are equivalent to conditions \eqref{item-invertibility-of-L_E} and \eqref{item-the-left-co-twist-is-an-equivalence}
of Proposition \ref{prps-tfae-left-cotwist-is-an-equivalence}.

As $\xi = \pi_X \circ \iota_W$ we 
have $\pi_{X *} E \simeq \pi_{X *} \iota_{W *} \mathcal{O}_W = \xi_{*}
\mathcal{O}_W$. By Lemma 
\ref{lemma-fibres-of-O_W-in-Z-x-X-are-precisely-O_Wp} the categorical
fibre $E_p$ is $\xi_{p *} \mathcal{O}_{W_p}$. Under these identifications
the morphism $\pi_{X *} E
\xrightarrow{ \eqref{eqn-nat-morphism-pi_X*-E-to-E_p} } E_p$
is readily seen to be the sheaf restriction map
 $\xi_{*} \mathcal{O}_W \rightarrow \xi_{*} \mathcal{O}_{W_p}$ and thus
 non-zero for every $p \in Z$. Therefore our condition 
(\ref{item-sphericity-condition-on-exts-for-fibres-for-fibrations}) is
equivalent to condition
(\ref{item-sphericity-condition-on-exts-for-fibres})
of Proposition \ref{prps-tfae-left-cotwist-is-an-equivalence} with an extra 
assumption that the integer $d_p$ is the same for all $p \in Z$. 

Now the assertion of this Proposition can be seen to follow directly from those
of Proposition \ref{prps-tfae-left-cotwist-is-an-equivalence}. 
\end{proof}

We could similarly re-state 
Theorem \ref{theorem-sphericity-for-orthogonal-objects-of-ZxX}. 
However under a mild non-degeneracy assumption on $W$ we can apply 
the results of Section \ref{section-the-canonical-morphism-alpha} 
to make a stronger and more geometric statement. Since $Z$ and $X$ 
are abstract varieties they are generically non-singular. Hence 
the Gorenstein locus of $Z \times X$ is certaily dense in $Z \times X$.
Our non-degeneracy assumption is that the graph of $W$ doesn't lie
 completely outside this locus:
\begin{theorem}
\label{theorem-sphericity-for-perfect-flat-fibrations}
Let $W$ be a flat and perfect fibration in $X$ with proper fibres. 
Then $W$ is spherical if:
\begin{enumerate}
\item 
\label{item-exts-from-W-into-W_p-are-two-dimensional}
For any closed $p \in Z$ we have 
\begin{align*}
\rder\homm_X(\xi_* \mathcal{O}_W, \xi_{p *} \mathcal{O}_{W_p}) = k
\oplus k[- (\dim X - \dim Z)].
\end{align*}
\item
\label{item-relative-dualizing-complexes-are-isomorphic} 
There exists an isomorphism
\begin{align*}
\iota_{W *} \xi^!(\mathcal{O}_X) \xrightarrow{\sim} 
\iota_{W *} \pi^!(\mathcal{L}_W).
\end{align*}
\end{enumerate}
If the graph of $W$ in $Z \times X$ doesn't lie outside 
the Gorenstein locus the reverse implication also holds. 
\end{theorem}
\begin{proof}
We have the following natural isomorphisms:
\begin{align*}
\iota_{W *} \xi^!(\mathcal{O}_X)
\xrightarrow{\sim}
\iota_{W*} \rder\shhomm_{Z \times X}\bigl(\mathcal{O}_W, 
\iota^!_W \pi^!_X(\mathcal{O}_X)\bigr)
\xrightarrow{\sim}
\rder\shhomm_{Z \times X}\bigl(\iota_{W *} \mathcal{O}_W, 
\pi^!_X(\mathcal{O}_X)\bigr) 
\xrightarrow{\sim}
E^\vee \otimes \pi^!_X(\mathcal{O}_X) 
\end{align*}
where the second isomorphism is due to the sheafified Grothendieck
duality and the third is due to $E = \iota_{W *} \mathcal{O}_W$
being perfect. Similarly we obtain
$\iota_{W *} \pi^{!}(\mathcal{L}_W) \simeq 
E^\vee \otimes \pi^!_Z(\mathcal{L}_W)$.  
Therefore $(\ref{item-relative-dualizing-complexes-are-isomorphic})$ is
equivalent to there existing an isomorphism 
$$E^\vee \otimes \pi^!_X(\mathcal{O}_X) \simeq 
E^\vee \otimes \pi^!_Z(\mathcal{L}_W).$$

Suppose that $(\ref{item-exts-from-W-into-W_p-are-two-dimensional})$
and
$(\ref{item-relative-dualizing-complexes-are-isomorphic})$
hold. By Proposition 
\ref{prps-tfae-left-cotwist-is-an-equivalence} the assumption
$(\ref{item-exts-from-W-into-W_p-are-two-dimensional})$
implies that the co-twist $F_E$ is an autoequivalence and 
$\mathcal{L}_W \simeq L[-(\dim X - \dim Z) ]$
for some $L \in \picr(Z)$. Since $\dim X - \dim Z > 0$ it follows
from Proposition \ref{prps-getting-rid-of-canonical-map-alpha}
that the existence of any isomorphism 
$E^\vee \otimes \pi^!_X(\mathcal{O}_X) \simeq 
E^\vee \otimes \pi^!_Z(\mathcal{L}_E)$ implies that the canonical 
morphism $\alpha$ of Definition \ref{defn-canonical morphism-alpha-E} 
is an isomorphism. Thus $F_E$ is an autoequivalence and $\alpha$ is 
an isomorphism, and so $W$ is a spherical fibration.

Conversely, suppose that $W$ is a spherical fibration whose graph 
doesn't lie  outside the Gorenstein locus of $Z \times X$.
Then  the co-twist $F_E$ is an autoequivalence and so by  
Proposition \ref{prps-tfae-left-cotwist-is-an-equivalence} we have
$\mathcal{L}_E \simeq L[d]$ for some $L \in \picr Z$ and $d \in \mathbb{Z}$. 
By the non-degeneracy assumption there exists a point $p \in W$ such that $\xi(p)$ 
is Gorenstein in $X$ and $\pi(p)$ is Gorenstein in $Z$. 
By Proposition \ref{prps-the-shift-of-L_E-is-the-difference-in-dimensions}
we then have $d = -(\dim X - \dim Z)$. Applying 
Proposition \ref{prps-tfae-left-cotwist-is-an-equivalence} again
yields the assertion
$(\ref{item-exts-from-W-into-W_p-are-two-dimensional})$. On the other hand, 
since $W$ is spherical the canonical morphism $\alpha$ is an
isomorphism $E^\vee \otimes \pi^!_X(\mathcal{O}_X) \xrightarrow{\sim} 
E^\vee \otimes \pi^!_Z(\mathcal{L}_W)$ whence the assertion 
$(\ref{item-relative-dualizing-complexes-are-isomorphic})$. 
\end{proof}

Recall the notion of a Gorenstein map, cf. \S 2.4
of \cite{AvramovIyengarLipman-ReflexivityAndRigidityForComplexesIISchemes}
or \cite{AvramovFoxby-GorensteinLocalHomomorphisms} for the
local picture. A scheme map $f\colon S \rightarrow T$ is
called Gorenstein if it is perfect and if $f^!(\mathcal{O}_T)$ is 
an invertible object of $D(S)$. If $S$ is connected
$f^!(\mathcal{O}_T)$ is a shift of some line bundle in $\picr S$.  
We call this line bundle the \em relative dualizing sheaf \rm 
and denote it by $\omega_{S/T}$. For any Gorenstein scheme 
$S$ over $k$ the \em (global) dualizing sheaf \rm of $S$
is the relative dualizing sheaf of $S \rightarrow \spec k$ and 
we denote it by $\omega_S$. If $S$  is  smooth then $\omega_S$ is the canonical bundle.  
 
In our case the map $\pi\colon W \rightarrow Z$ 
is faithfully flat and thus Gorenstein if and only if its fibres 
are Gorenstein schemes
\cite[Prop, 2.5.10]{AvramovIyengarLipman-ReflexivityAndRigidityForComplexesIISchemes}. 
On the other hand, the map $\xi$ is the composition 
$$ W \xrightarrow{\iota_W} Z \times X \xrightarrow{\pi_X} X. $$
The closed immersion $\iota_W$ is perfect by the assumption
that $\iota_{W *} \mathcal{O}_W$ is perfect 
\cite[Prop. 4.4]{IllusieConditionsDeFinitudeRelative}. 
Hence $\xi$ is perfect as it is 
a composition of two perfect maps. Thus $\xi$ is Gorenstein 
if and only if $\xi^!(\mathcal{O}_X)$ is invertible. 

If either $\pi$ or $\xi$ are Gorenstein
we can re-state the second part of
the Theorem \ref{theorem-sphericity-for-perfect-flat-fibrations}
in terms of the line bundles involved and 
get rid of the non-degeneracy assumption on $W$:

\begin{prps}
\label{prps-sphericity-for-Gorenstein-flat-fibrations}
Let $W$ be a flat and perfect fibration in $X$ over $Z$ with proper fibres
and assume that either the immersion $\xi\colon W \hookrightarrow X$
or the fibration $\pi\colon W \rightarrow Z$ is Gorenstein. Then 
$W$ is spherical if and only if:
\begin{enumerate}
\item
\label{item-fibrewise-k+kd-in-sphericity-condition-for-gorenstein}
For any closed $p \in Z$ we have 
\begin{align*}
\rder\homm_X(\xi_* \mathcal{O}_W, \xi_{p *} \mathcal{O}_{W_p}) = k
\oplus k[-(\dim X - \dim Z)] 
\end{align*}
By Proposition \ref{prps-tfae-left-cotwist-is-an-equivalence-for-fibrations}
this implies that $\mathcal{L}_W = L [-(\dim X - \dim Z)]$ for 
some $L \in \picr Z$. 
\item 
\label{item-dualizing-sheaves-isomorphism-in-sphericity-condition-for-gorenstein}
Both $\xi$ and $\pi$ are Gorenstein and we have in $\picr(W)$ an isomorphism 
\begin{align*}
\omega_{W/X} \simeq \pi^* L \otimes \omega_{W/Z}.
\end{align*} 
\end{enumerate}
\end{prps}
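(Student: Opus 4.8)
The plan is to reduce everything to Theorem~\ref{theorem-sphericity-for-perfect-flat-fibrations}, exploiting the Gorenstein hypothesis on one of $\xi,\pi$ to both discard the non-degeneracy assumption there and rephrase its condition $(\ref{item-relative-dualizing-complexes-are-isomorphic})$ in terms of line bundles. First I would set up the dictionary. Since $\iota_D$ is a closed immersion, $\iota_{D*}$ is fully faithful, and since $E=\iota_{D*}\mathcal{O}_D$ is perfect the chain of isomorphisms used in the proof of Theorem~\ref{theorem-sphericity-for-perfect-flat-fibrations} identifies $E^\vee\otimes\pi^!_X(\mathcal{O}_X)\simeq\iota_{D*}\xi^!(\mathcal{O}_X)$ and $E^\vee\otimes\pi^!_Z(\mathcal{L}_E)\simeq\iota_{D*}\pi^!(\mathcal{L}_E)$; hence the content of condition $(\ref{item-relative-dualizing-complexes-are-isomorphic})$ of that theorem is the existence of an isomorphism $\xi^!(\mathcal{O}_X)\simeq\pi^!(\mathcal{L}_E)$ in $D(D)$. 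Moreover, whenever $\mathcal{L}_E\simeq L[d]$ with $L$ a line bundle, Lemma~\ref{lemma-natural-map-f^!=f^*-f^!-is-iso-for-perf-and-S-perf} applied to $f=\pi$ (with the perfect object $L$ and the $\spec k$-perfect object $\mathcal{O}_Z$) gives $\pi^!(\mathcal{L}_E)\simeq\pi^*L\otimes\pi^!(\mathcal{O}_Z)[d]$. Throughout I will use the standard facts that for a Gorenstein closed immersion $\xi$ one has $\xi^!(\mathcal{O}_X)\simeq\omega_{D/X}[-\codim_X D]$, and that for the flat map $\pi$ one has $\pi^!(\mathcal{O}_Z)\simeq\omega_{D/Z}[\dim D-\dim Z]$ once $\pi$ is Gorenstein, together with the numerical identity $\codim_X D+(\dim D-\dim Z)=\dim X-\dim Z$.

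For the ``if'' direction, assume $(\ref{item-fibrewise-k+kd-in-sphericity-condition-for-gorenstein})$ and $(\ref{item-dualizing-sheaves-isomorphism-in-sphericity-condition-for-gorenstein})$. By Proposition~\ref{prps-tfae-left-cotwist-is-an-equivalence-for-fibrations}, condition $(\ref{item-fibrewise-k+kd-in-sphericity-condition-for-gorenstein})$ makes $F_E$ an autoequivalence and identifies $\mathcal{L}_D$ with $L[-(\dim X-\dim Z)]$. Feeding the displayed descriptions of $\xi^!(\mathcal{O}_X)$ and $\pi^!(\mathcal{O}_Z)$ together with condition $(\ref{item-dualizing-sheaves-isomorphism-in-sphericity-condition-for-gorenstein})$ into the dictionary, the numerical identity makes the shifts cancel, so the assumed isomorphism $\omega_{D/X}\simeq\pi^*L\otimes\omega_{D/Z}$ yields $\xi^!(\mathcal{O}_X)\simeq\pi^!(\mathcal{L}_E)$; since condition $(\ref{item-fibrewise-k+kd-in-sphericity-condition-for-gorenstein})$ is exactly condition $(\ref{item-exts-from-D-into-D_p-are-two-dimensional})$ of Theorem~\ref{theorem-sphericity-for-perfect-flat-fibrations} and the ``if'' half of that theorem is unconditional, $D$ is spherical.

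For the ``only if'' direction, assume $D$ is spherical. By Proposition~\ref{prps-tfae-left-cotwist-is-an-equivalence-for-fibrations} we have $\mathcal{L}_D\simeq L[d]$ for some $L\in\picr Z$ and $d\in\mathbb{Z}$, and sphericity means the canonical morphism $\alpha$ is an isomorphism, so the dictionary produces an isomorphism $\xi^!(\mathcal{O}_X)\simeq\pi^!(\mathcal{L}_E)\simeq\pi^*L\otimes\pi^!(\mathcal{O}_Z)[d]$ in $D(D)$. Now I invoke the hypothesis that $\xi$ or $\pi$ is Gorenstein: in either case one of $\xi^!(\mathcal{O}_X),\pi^!(\mathcal{O}_Z)$ is invertible, and since tensoring with the line bundle $\pi^*L$ and shifting preserve invertibility, the isomorphism forces the other to be invertible as well; both $\xi$ and $\pi$ being already perfect, both are therefore Gorenstein. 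With both Gorenstein I substitute the standard descriptions of $\xi^!(\mathcal{O}_X)$ and $\pi^!(\mathcal{O}_Z)$; comparing the single cohomology sheaves of the two sides forces $-\codim_X D=(\dim D-\dim Z)+d$, i.e.\ $d=-(\dim X-\dim Z)$. Proposition~\ref{prps-tfae-left-cotwist-is-an-equivalence-for-fibrations} then gives condition $(\ref{item-fibrewise-k+kd-in-sphericity-condition-for-gorenstein})$, and the isomorphism, now with matching shifts, descends to $\omega_{D/X}\simeq\pi^*L\otimes\omega_{D/Z}$ in $\picr D$, which is condition $(\ref{item-dualizing-sheaves-isomorphism-in-sphericity-condition-for-gorenstein})$.

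The step I expect to be the main obstacle is justifying the precise cohomological shifts of $\xi^!(\mathcal{O}_X)$ and $\pi^!(\mathcal{O}_Z)$: for $\xi$ merely Gorenstein rather than regular, and for $\pi$ flat but possibly with non-equidimensional total space, one has to argue carefully that these shifts are exactly $-\codim_X D$ and $-(\dim D-\dim Z)$ (say by passing to a suitable point of $D$ and invoking Proposition~\ref{prps-the-shift-of-L_E-is-the-difference-in-dimensions}, or via the local structure theory of Gorenstein maps in \cite{AvramovIyengarLipman-ReflexivityAndRigidityForComplexesIISchemes}, \S2.4). The ``one Gorenstein $\Rightarrow$ both Gorenstein'' bootstrap is the conceptual heart, but it is short once the $f^!$ projection formula of Lemma~\ref{lemma-natural-map-f^!=f^*-f^!-is-iso-for-perf-and-S-perf} is in hand.
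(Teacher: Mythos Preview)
Your approach is essentially the paper's, but there is one genuine gap: the assertion that $\iota_{D*}$ is fully faithful is false on derived categories. For a closed immersion $\iota\colon Y\hookrightarrow X$ one generally has $\iota^*\iota_*\mathcal{O}_Y\not\simeq\mathcal{O}_Y$ (think of a Cartier divisor), so $\homm_{D(X)}(\iota_*A,\iota_*B)\simeq\homm_{D(Y)}(\iota^*\iota_*A,B)$ need not agree with $\homm_{D(Y)}(A,B)$. Consequently your ``dictionary'' does not by itself let you lift an isomorphism $\iota_{D*}\xi^!(\mathcal{O}_X)\simeq\iota_{D*}\pi^!(\mathcal{L}_E)$ in $D(Z\times X)$ to one in $D(D)$, and this lift is exactly what you invoke in the ``only if'' direction.

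The paper repairs this using the Gorenstein hypothesis a second time, \emph{before} attempting to lift. Since one of $\xi,\pi$ is Gorenstein, one of $\xi^!(\mathcal{O}_X),\pi^!(\mathcal{L}_D)$ is a shifted line bundle; hence its $\iota_{D*}$-image is a shifted coherent sheaf, and therefore so is the other side of the isomorphism in $D(Z\times X)$. Because $\iota_{D*}$ is exact and faithful on coherent sheaves, this forces both $\xi^!(\mathcal{O}_X)$ and $\pi^!(\mathcal{L}_D)$ to be shifted coherent sheaves. Now one may use that $\iota_{D*}\colon\cohcat(D)\to\cohcat(Z\times X)$ \emph{is} fully faithful on the abelian categories to lift the isomorphism to $D(D)$. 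Once this is inserted, the rest of your argument (the ``one Gorenstein $\Rightarrow$ both'' bootstrap, the shift computation, and the appeal to Proposition~\ref{prps-tfae-left-cotwist-is-an-equivalence-for-fibrations}) matches the paper verbatim. Your closing worry about the precise shifts of $\xi^!(\mathcal{O}_X)$ and $\pi^!(\mathcal{O}_Z)$ is not an issue in the paper: it simply records these as $-(\dim X-\dim D)$ and $\dim D-\dim Z$ without further comment.
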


\begin{proof}

\em`If': \rm 
We have
\begin{align*}
& \xi^!(\mathcal{O}_X) \simeq \omega_{W/X}[- (\dim X - \dim W)] 
& \pi^!(\mathcal{O}_X) \simeq \omega_{W/Z}[\dim W - \dim Z] 
\end{align*}
and therefore the condition 
$(\ref{item-dualizing-sheaves-isomorphism-in-sphericity-condition-for-gorenstein})$
implies $\xi^!(\mathcal{O}_X) \simeq \pi^!(\mathcal{L}_W)$. 
Therefore $W$ is spherical by 
Theorem \ref{theorem-sphericity-for-perfect-flat-fibrations}. 

\em `Only if': \rm Suppose $W$ is spherical. Arguing as in the proof of 
Theorem \ref{theorem-sphericity-for-perfect-flat-fibrations} shows that $\mathcal{L}_W$ is invertible and we have 
an isomorphism 
\begin{align}
\label{eqn-immersed-iso-of-relative-dualizing-sheaves}
\iota_{W *} \xi^!(\mathcal{O}_X) \xrightarrow{\sim} 
\iota_{W *} \pi^!(\mathcal{L}_W).
\end{align}
Our assumptions imply that one of $\xi^!(\mathcal{O}_X)$ 
or $\pi^!(\mathcal{L}_W)$ is invertible. Thus \eqref{eqn-immersed-iso-of-relative-dualizing-sheaves}
is an isomorphism of (shifted) coherent sheaves. Since $\iota_{W *}$ 
is a closed immersion it restricts to a fully faithful functor 
$\cohcat(W) \rightarrow \cohcat(Z \times X)$. Hence  
isomorphism \eqref{eqn-immersed-iso-of-relative-dualizing-sheaves}
lifts to an isomorphism 
\begin{align}
\label{eqn-xi!-equals-pi!L_E}
\xi^!(\mathcal{O}_X) \xrightarrow{\sim} 
\pi^!(\mathcal{L}_W).
\end{align}
Therefore $\xi^!(\mathcal{O}_X)$ and $\pi^!(\mathcal{L}_W)$ are both 
invertible, i.e. $\pi$ and $\xi$ are both Gorenstein. 

Since $\mathcal{L}_W$ is invertible it is of form $L[d]$ for some 
$L \in \picr Z$ and $d \in \mathbb{Z}$. We can re-write 
\eqref{eqn-xi!-equals-pi!L_E} as 
$$
\omega_{W/X}[-(\dim X - \dim W)] \simeq
\pi^* L[d] \otimes \omega_{W/Z}[\dim W - \dim Z] 
$$
whence $d = - (\dim X - \dim Z)$ and the isomorphism 
$(\ref{item-dualizing-sheaves-isomorphism-in-sphericity-condition-for-gorenstein})$.
Finally, since $\mathcal{L}_W \simeq L[-(\dim X - \dim Z)]$
we can apply Proposition 
\ref{prps-tfae-left-cotwist-is-an-equivalence-for-fibrations} 
to obtain the assertion
$(\ref{item-fibrewise-k+kd-in-sphericity-condition-for-gorenstein})$. 
\end{proof}

\begin{cor}
\label{cor-if-one-is-Gorenstein-both-are}
Let $W$ be a spherical fibration in $X$ over $Z$. Then $\xi\colon 
W \hookrightarrow X$ is a Gorenstein immersion if and only 
if all the fibres of $\pi\colon W \rightarrow Z$ are Gorenstein 
schemes. 
\end{cor}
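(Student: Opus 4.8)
The plan is to read this off directly from Proposition \ref{prps-sphericity-for-Gorenstein-flat-fibrations} together with the characterisation of Gorenstein-ness of $\pi$ recalled just before it. Since $\pi\colon D \rightarrow Z$ is faithfully flat, $\pi$ is a Gorenstein map precisely when all of its fibres over closed points of $Z$ are Gorenstein schemes (\cite{AvramovIyengarLipman-ReflexivityAndRigidityForComplexesIISchemes}, Prop. 2.5.10). Hence the statement to be proved is equivalent to the assertion that, for a spherical fibration $D$, the immersion $\xi$ is Gorenstein if and only if the map $\pi$ is Gorenstein. Moreover, being a spherical fibration, $D$ is flat and perfect with proper fibres, so the standing hypotheses of Proposition \ref{prps-sphericity-for-Gorenstein-flat-fibrations} are already in force; the one extra hypothesis of that proposition, namely that one of $\xi$ or $\pi$ be Gorenstein, will be supplied by whichever side of the equivalence we assume.

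First I would treat the implication ``$\xi$ Gorenstein $\Rightarrow$ all fibres of $\pi$ Gorenstein''. Assuming $\xi$ is a Gorenstein immersion, Proposition \ref{prps-sphericity-for-Gorenstein-flat-fibrations} applies to $D$; since $D$ is moreover spherical, the ``only if'' direction of that proposition yields its condition (\ref{item-dualizing-sheaves-isomorphism-in-sphericity-condition-for-gorenstein}), which asserts in particular that both $\xi$ and $\pi$ are Gorenstein. Thus $\pi$ is a Gorenstein map, and by the criterion above all of its fibres are Gorenstein schemes. The converse implication ``all fibres of $\pi$ Gorenstein $\Rightarrow$ $\xi$ Gorenstein'' is obtained symmetrically: if all fibres of $\pi$ are Gorenstein schemes then $\pi$ is a Gorenstein map, so again Proposition \ref{prps-sphericity-for-Gorenstein-flat-fibrations} applies and its ``only if'' direction forces both $\xi$ and $\pi$ to be Gorenstein; in particular $\xi$ is a Gorenstein immersion.

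Since each direction is essentially a one-line invocation of an already established proposition, I do not expect any genuine obstacle here; the only point requiring a moment's care is to observe that the hypothesis ``either $\xi$ or $\pi$ is Gorenstein'' needed to apply Proposition \ref{prps-sphericity-for-Gorenstein-flat-fibrations} is exactly the assumption being made on each side of the equivalence, so that the proposition is indeed applicable in both directions.
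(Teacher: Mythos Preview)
Your proof is correct and is exactly the intended argument: the paper states this corollary immediately after Proposition \ref{prps-sphericity-for-Gorenstein-flat-fibrations} with no separate proof, precisely because the ``only if'' direction of that proposition (applied with whichever of $\xi$ or $\pi$ is assumed Gorenstein) forces both maps to be Gorenstein, and the equivalence between $\pi$ being Gorenstein and its fibres being Gorenstein schemes was already recorded in the paragraph preceding the proposition.
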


\subsection{Regularly immersed fibrations}

One class of Gorenstein maps is that of regular immersions, 
cf. \cite{Grothendieck-EGA-IV-4}, \S16 and \S19 and 
\cite{Berthelot-ImmersionsRegulieresEtCalculDuKDUnSchemaEclate}. 
A closed immersion $\iota\colon Y \hookrightarrow X$ 
of two schemes is called \em regular \rm if the ideal sheaf $\mathcal{I}_Y$ 
of $Y$ in $X$ is locally generated by a regular sequence.
It follows that locally on $X$ the Koszul complex of $Y$ is 
a resolution of the sheaf $\iota_* \mathcal{O}_Y$ by free sheaves.
In particular, the co-normal sheaf $\mathcal{I}_Y / \mathcal{I}^2_Y$ 
is a locally free sheaf on $Y$ whose rank $c$ is 
the codimension of $Y$ in $X$. We denote by $\mathcal{N}_{Y/X}$ 
its dual $(\mathcal{I}_Y / \mathcal{I}^2_Y)^\vee$, the normal sheaf 
of $Y$ in $X$. 

It follows by \S III.7 of \cite{Hartshorne-Residues-and-Duality} that 
$$ \iota^!(\mathcal{O}_X) = \wedge^c \mathcal{N}_{Y/X}[-c] $$ 
i.e. the relative dualizing sheaf $\omega_{Y/X}$ is the 
line bundle $\wedge^c \mathcal{N}_{Y/X}$. 
By \cite[Prop. 2.5]{Berthelot-ImmersionsRegulieresEtCalculDuKDUnSchemaEclate}
the cohomology sheaves of $\iota^* \iota_* \mathcal{O}_Y$ are
$$ \mathcal{H}^{-i}(\iota^* \iota_* \mathcal{O}_Y) = 
\wedge^i \mathcal{N}^\vee_{Y/X} \quad \quad \forall\; i \in \mathbb{Z}. $$
Let $A$ be any object of $D(Y)$. By projection formula we have
$$ \iota_*(\iota^*\iota_* A) \simeq \iota_* A \otimes \iota_* \mathcal{O}_Y
\simeq \iota_*(A \otimes \iota^* \iota_* \mathcal{O}_Y). $$
As $\iota_*$ is exact and fully faithful on the level of abelian 
categories of coherent sheaves it follows that 
the cohomology sheaves of $\iota^* \iota_* A$ 
are isomorphic to those of $A \otimes \iota^* \iota_* \mathcal{O}_Y$. 

One can ask when does $\iota^* \iota_* \mathcal{O}_Y$ 
split up as the direct sum of its cohomology sheaves:
\begin{align}
\label{eqn-Arinkin-Caldararu-property}
\iota^* \iota_* \mathcal{O}_Y \xrightarrow{\sim} \bigoplus_i \wedge^i
\mathcal{N}^\vee_{Y/X}[i]
\end{align}
This is true when a global Koszul resolution of $Y$ in $X$
exists, i.e. when $Y$ is carved out in $X$ by a section of a vector bundle. 
For smooth $X$ a more general answer was provided by Arinkin and Caldararu in
\cite{ArinkinCaldararu-WhenIsTheSelfIntersectionOfASubvarietyAFibration}:
$\iota^* \iota_* \mathcal{O}_Y$ is isomorphic to 
$\bigoplus_i \wedge^i \mathcal{N}^\vee_{Y/X}[i]$ if and only if 
the normal sheaf $\mathcal{N}_{Y/X}$ extends to the first
infinitesimal neighborhood of $Y$ in $X$. The examples of when 
this holds include: when $Y$ is carved out
by a section of a vector bundle, when the immersion 
$\iota\colon Y \hookrightarrow X$ can be split 
and when $Y$ is the fixed locus of a finite group action on $X$. 

For arbitrary schemes we make the the following definition:

\begin{defn}
Let $Y$ and $X$ be a pair of schemes and let 
$\iota\colon Y \rightarrow X$ be a regular immersion. We say 
that $\iota$ is an \em Arinkin-Caldararu immersion \rm 
if $\iota^* \iota_* \mathcal{O}_Y$ is isomorphic to 
$\bigoplus_i \wedge^i \mathcal{N}_{Y/X}^\vee[i]$ in $D(X)$. 
\end{defn}

Going back to our setup,  we say that a fibration $W$ in $X$ over $Z$ is 
\em regularly immersed \rm if $\xi\colon W \hookrightarrow X$ 
is a regular immersion. Knowing the cohomology sheaves of 
$\xi^* \xi_* \mathcal{O}_W$ allows us to reduce the condition 
$$ \rder\homm_{X}(\xi_* \mathcal{O}_{W}, \xi_{p *} \mathcal{O}_{W_p}) = 
k \oplus k[-(\dim X - \dim Z)] $$
via a spectral sequence argument to a statement on the vanishing of 
the sheaf cohomologies of  $\wedge^i \mathcal{N}$ on $W_p$. If, moreover,
$\xi^* \xi_* \mathcal{O}_W$ breaks up as a sum of its cohomologies then
there is no need for the spectral sequence argument and we also
obtain the converse implication: 
\begin{theorem}
\label{theorem-sphericity-for-regular-immersions}
Let $W$ be a regularly immersed flat and perfect fibration in $X$ over
$Z$ with proper fibres. Let $c$ be the codimension of $W$ in $X$, 
let $d$ be the dimension of the fibres of $W$ and let $\mathcal{N} = \mathcal{N}_{W/X}$.

Then $W$ is spherical if for any closed point $p \in Z$
the fibre $W_p$ is a connected Gorenstein scheme and
\begin{enumerate}
\item 
\label{eqn-cohomological-vanishing-of-the-normal-sheaf}
$H^i_{W_p}(\wedge^j \mathcal{N}|_{W_p}) = 0$  unless $i = j = 0$
or $i = d \;,\; j = c$.
\item 
\label{eqn-restriction-of-the-normal-sheaf}
$(\omega_{W/X})|_{W_p} \simeq \omega_{W_p}$. 
\end{enumerate}
Conversely, if $W$ is spherical then each fibre $W_p$ is
a connected Gorenstein scheme and
(\ref{eqn-restriction-of-the-normal-sheaf}) holds. 
If, moreover, $\xi$ is an Arinkin-Caldararu immersion then 
$(\ref{eqn-cohomological-vanishing-of-the-normal-sheaf})$ also holds. 
\end{theorem}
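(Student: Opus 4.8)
The plan is to reduce the whole statement to Proposition~\ref{prps-sphericity-for-Gorenstein-flat-fibrations}, which applies here because a regular immersion $\xi$ is automatically Gorenstein, with $\xi^!(\mathcal{O}_X)\simeq\omega_{D/X}[-c]$ and $\omega_{D/X}=\wedge^c\mathcal{N}$; the remaining work is to translate the two conditions of that proposition into the fibrewise statements \eqref{eqn-cohomological-vanishing-of-the-normal-sheaf} and \eqref{eqn-restriction-of-the-normal-sheaf} by means of the identifications $\mathcal{H}^{-j}(\xi^*\xi_*\mathcal{O}_D)=\wedge^j\mathcal{N}^\vee$ and $\rder\homm_X(\xi_*\mathcal{O}_D,\xi_{p*}\mathcal{O}_{D_p})\simeq\rder\homm_{D_p}(\xi_p^*\xi_*\mathcal{O}_D,\mathcal{O}_{D_p})$ (the latter by adjunction), noting throughout that $\dim X-\dim Z=d+c$.

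\emph{The `if' implication.} Assume each $D_p$ is a connected Gorenstein scheme and that \eqref{eqn-cohomological-vanishing-of-the-normal-sheaf}, \eqref{eqn-restriction-of-the-normal-sheaf} hold. First I would note that $\pi$, being faithfully flat with Gorenstein fibres, is a Gorenstein map, so $\pi^!(\mathcal{O}_Z)\simeq\omega_{D/Z}[d]$ and, by base change for $(-)^!$ around \eqref{eqn-the-fibre-square-for-D-over-Z}, $\omega_{D/Z}|_{D_p}\simeq\omega_{D_p}$; with $\xi$ Gorenstein this is the ``both Gorenstein'' part of condition~\eqref{item-dualizing-sheaves-isomorphism-in-sphericity-condition-for-gorenstein}. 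Next, since the cohomology sheaves $\wedge^j\mathcal{N}^\vee$ of $\xi^*\xi_*\mathcal{O}_D$ are locally free, pulling back along $\iota_{D_p}$ and then dualizing over $D_p$ shows $\rder\shhomm_{D_p}(\xi_p^*\xi_*\mathcal{O}_D,\mathcal{O}_{D_p})$ has cohomology sheaf $\wedge^j(\mathcal{N}|_{D_p})$ in degree $j$; the hypercohomology spectral sequence $H^i(D_p,\wedge^j(\mathcal{N}|_{D_p}))\Rightarrow\homm^{i+j}_{D(D_p)}(\xi_p^*\xi_*\mathcal{O}_D,\mathcal{O}_{D_p})$ then has, by \eqref{eqn-cohomological-vanishing-of-the-normal-sheaf}, only the terms in bidegrees $(0,0)$ and $(d,c)$, hence collapses for degree reasons, and using connectedness of $D_p$ (so $H^0(\mathcal{O}_{D_p})=k$), $\wedge^c(\mathcal{N}|_{D_p})\simeq\omega_{D_p}$ from \eqref{eqn-restriction-of-the-normal-sheaf}, and Serre duality on $D_p$, one gets $\rder\homm_X(\xi_*\mathcal{O}_D,\xi_{p*}\mathcal{O}_{D_p})=k\oplus k[-(d+c)]$, which is condition~\eqref{item-fibrewise-k+kd-in-sphericity-condition-for-gorenstein}. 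By Proposition~\ref{prps-tfae-left-cotwist-is-an-equivalence-for-fibrations} this makes $\mathcal{L}_D\simeq L[-(d+c)]$ for some $L\in\picr Z$, and inspecting the defining morphism of $\mathcal{L}_D$ from Proposition~\ref{prps-alternative-description-of-L_D} via cohomology-and-base-change identifies $L\simeq R^d\pi_*\omega_{D/X}$. Finally, $\omega_{D/X}|_{D_p}\simeq\omega_{D_p}\simeq\omega_{D/Z}|_{D_p}$ shows $\omega_{D/X}\otimes\omega_{D/Z}^{-1}$ restricts trivially to every fibre, hence equals $\pi^*N$ with $N=\pi_*(\omega_{D/X}\otimes\omega_{D/Z}^{-1})$; applying $R^d\pi_*$, the projection formula, and relative Serre duality $R^d\pi_*\omega_{D/Z}\simeq\mathcal{O}_Z$ give $N\simeq L$, so $\omega_{D/X}\simeq\pi^*L\otimes\omega_{D/Z}$, which is condition~\eqref{item-dualizing-sheaves-isomorphism-in-sphericity-condition-for-gorenstein}. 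Proposition~\ref{prps-sphericity-for-Gorenstein-flat-fibrations} then yields that $D$ is spherical.

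\emph{The converse.} If $D$ is spherical, then since $\xi$ is Gorenstein, Proposition~\ref{prps-sphericity-for-Gorenstein-flat-fibrations} applies; its condition~\eqref{item-dualizing-sheaves-isomorphism-in-sphericity-condition-for-gorenstein} says $\pi$ is Gorenstein, hence (being faithfully flat) has Gorenstein fibres, and $\omega_{D/X}\simeq\pi^*L\otimes\omega_{D/Z}$ with $\mathcal{L}_D\simeq L[-(\dim X-\dim Z)]$; restricting this isomorphism to a fibre and using $\omega_{D/Z}|_{D_p}\simeq\omega_{D_p}$ and $\omega_{D/X}|_{D_p}=\wedge^c(\mathcal{N}|_{D_p})$ gives \eqref{eqn-restriction-of-the-normal-sheaf}. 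Connectedness of $D_p$ follows because $\homm^0_{D(X)}(\xi_*\mathcal{O}_D,\xi_{p*}\mathcal{O}_{D_p})\simeq\homm_{D_p}(\mathcal{H}^0(\xi_p^*\xi_*\mathcal{O}_D),\mathcal{O}_{D_p})=H^0(D_p,\mathcal{O}_{D_p})$, which is one-dimensional by Proposition~\ref{prps-tfae-left-cotwist-is-an-equivalence-for-fibrations}\eqref{item-sphericity-condition-on-exts-for-fibres-for-fibrations}. If moreover $\xi$ is an Arinkin--Caldararu immersion, then $\xi^*\xi_*\mathcal{O}_D\simeq\bigoplus_j\wedge^j\mathcal{N}^\vee[j]$, so pulling back to $D_p$ and dualizing gives $\rder\homm_X(\xi_*\mathcal{O}_D,\xi_{p*}\mathcal{O}_{D_p})\simeq\bigoplus_{i,j}H^i(D_p,\wedge^j(\mathcal{N}|_{D_p}))[-i-j]$; comparing with $k\oplus k[-(d+c)]$ degree by degree (recall $0\le j\le c$ and $0\le i\le d$) forces $H^i(D_p,\wedge^j(\mathcal{N}|_{D_p}))=0$ unless $(i,j)=(0,0)$ or $(d,c)$, which is \eqref{eqn-cohomological-vanishing-of-the-normal-sheaf}.

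\emph{Main obstacle.} The delicate point is the `if' implication: computing $\rder\homm_X(\xi_*\mathcal{O}_D,\xi_{p*}\mathcal{O}_{D_p})$ when $\xi^*\xi_*\mathcal{O}_D$ does not split --- this is precisely where the corner-vanishing hypothesis \eqref{eqn-cohomological-vanishing-of-the-normal-sheaf} is needed to collapse the spectral sequence --- and, more subtly, checking that the line bundle forced by condition~\eqref{item-dualizing-sheaves-isomorphism-in-sphericity-condition-for-gorenstein} of Proposition~\ref{prps-sphericity-for-Gorenstein-flat-fibrations} is the one attached to $\mathcal{L}_D$ and not merely a twist of it by a line bundle pulled back from $Z$; resolving this requires relative Grothendieck duality for $\pi$ in the shape $R^d\pi_*\omega_{D/Z}\simeq\mathcal{O}_Z$.
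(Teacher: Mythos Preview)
Your proof is correct and follows essentially the same route as the paper's: reduce to Proposition~\ref{prps-sphericity-for-Gorenstein-flat-fibrations}, compute $\rder\homm_X(\xi_*\mathcal{O}_D,\xi_{p*}\mathcal{O}_{D_p})$ via the spectral sequence coming from the cohomology sheaves $\wedge^j\mathcal{N}^\vee$ of $\xi^*\xi_*\mathcal{O}_D$, identify $L$ with $R^d\pi_*\omega_{D/X}$, and then pass from the fibrewise isomorphism $\omega_{D/X}|_{D_p}\simeq\omega_{D/Z}|_{D_p}$ to the global $\omega_{D/X}\simeq\pi^*L\otimes\omega_{D/Z}$ by a cohomology-and-base-change/relative-duality argument. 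The only cosmetic difference is that the paper packages this last fibrewise-to-global step into a separate Lemma~\ref{lemma-fibrewise-to-global-for-regular-immersions} (using the adjunction counit for $\pi_*(\omega_{D/X}^{-1}\otimes\omega_{D/Z})$ rather than your dual formulation), and runs the spectral sequence on $D$ via $\iota_{D_p *}$ rather than on $D_p$ via $\xi_p^*$.
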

 
The following lemma is a global version of the fibrewise conditions of  
Theorem \ref{theorem-sphericity-for-regular-immersions}:
\begin{lemma}
\label{lemma-fibrewise-to-global-for-regular-immersions}
Let $W$ be a regularly immersed flat and perfect fibration in $X$ over
$Z$ with proper fibres. Assume that for any closed point $p \in Z$
the fibre $W_p$ is a connected Gorenstein scheme.

Then having for every closed point $p \in Z$ 
\begin{align}
\label{eqn-fibrewise-version-of-conditions-in-theorem-for-regular-immersions}
H^i_{W_p}(\wedge^j \mathcal{N}|_{W_p}) = 0  \text{ unless } i = j = 0
\text{ or } i = d \;,\; j = c \\
\notag
\omega_{W/X}|_{W_p} = \omega_{W_p}
\end{align}
is equivalent to having
\begin{align}
\label{eqn-global-version-of-conditions-in-theorem-for-regular-immersions}
\pi_* \mathcal{O}_W = \mathcal{O}_Z, \quad 
\pi_* \wedge^j\mathcal{N} = 0  \text{ for all } \; 0 < j < c, \quad 
\pi_* \omega_{W/X} = L[d], \\ 
\notag
\omega_{W/X} = \pi^*L \otimes \omega_{W/Z} 
\end{align}
for some $L \in \picr Z$. In particular,  
\eqref{eqn-fibrewise-version-of-conditions-in-theorem-for-regular-immersions}
implies that $H^0_{W_p}(\mathcal{O}_{W_p}) \simeq 
H^d_{W_p}(\omega_{W/X}|_{W_p}) \simeq k$. 
\end{lemma}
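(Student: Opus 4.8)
The plan is to rewrite both $\eqref{eqn-fibrewise-version-of-conditions-in-theorem-for-regular-immersions}$ and $\eqref{eqn-global-version-of-conditions-in-theorem-for-regular-immersions}$ as statements about the single proper flat morphism $\pi\colon D\rightarrow Z$ and then pass between $D$ and its fibres by base change. First I would collect the necessary structure. Since $\xi$ is a regular immersion the normal sheaf $\mathcal{N}$, hence each $\wedge^{j}\mathcal{N}$ and in particular $\omega_{D/X}=\wedge^{c}\mathcal{N}$, is locally free on $D$, and as $\pi$ is flat these sheaves are all $\pi$-flat. Because $\pi$ is moreover proper, faithfully flat and has Gorenstein fibres, it is a Gorenstein morphism of relative dimension $d$, so $\pi^{!}\mathcal{O}_{Z}\simeq\omega_{D/Z}[d]$ with $\omega_{D/Z}\in\picr D$ restricting to $\omega_{D_{p}}$ on each fibre. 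For a $\pi$-flat coherent sheaf $\mathcal{F}$ on $D$ the object $\pi_{*}\mathcal{F}$ is perfect on $Z$ and cohomology-and-base-change gives $\iota_{p}^{*}\pi_{*}\mathcal{F}\simeq\rder\Gamma(D_{p},\mathcal{F}|_{D_{p}})$ for every closed point $p$; I would repeatedly use that a perfect complex on the irreducible scheme $Z$ vanishes iff all its fibres $\iota_{p}^{*}(-)$ vanish, and is a shift of a locally free sheaf iff all its fibres are concentrated in a single cohomological degree.

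Granting this, the implication $\eqref{eqn-global-version-of-conditions-in-theorem-for-regular-immersions}\Rightarrow\eqref{eqn-fibrewise-version-of-conditions-in-theorem-for-regular-immersions}$ together with the last sentence of the lemma is immediate: applying $\iota_{p}^{*}$ to $\pi_{*}\mathcal{O}_{D}=\mathcal{O}_{Z}$, to $\pi_{*}\wedge^{j}\mathcal{N}=0$ for $0<j<c$, and to $\pi_{*}\omega_{D/X}=L[d]$ produces $\rder\Gamma(D_{p},\mathcal{O}_{D_{p}})=k$, $\rder\Gamma(D_{p},\wedge^{j}\mathcal{N}|_{D_{p}})=0$, and $\rder\Gamma(D_{p},\omega_{D/X}|_{D_{p}})=k[d]$, which is precisely the stated vanishing pattern and in particular yields $H^{0}_{D_{p}}(\mathcal{O}_{D_{p}})\simeq H^{d}_{D_{p}}(\omega_{D/X}|_{D_{p}})\simeq k$; and restricting $\omega_{D/X}=\pi^{*}L\otimes\omega_{D/Z}$ to $D_{p}$ gives $\omega_{D/X}|_{D_{p}}\simeq\omega_{D/Z}|_{D_{p}}\simeq\omega_{D_{p}}$.

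For the converse, the vanishing of $H^{i}_{D_{p}}(\wedge^{j}\mathcal{N}|_{D_{p}})$ for $i\neq 0$ when $0\le j<c$ makes each $\pi_{*}\wedge^{j}\mathcal{N}$ a locally free sheaf $\mathcal{F}_{j}$ with fibre $H^{0}(D_{p},\wedge^{j}\mathcal{N}|_{D_{p}})$; for $0<j<c$ this fibre is zero, so $\mathcal{F}_{j}=0$, while the $j=c$ vanishing makes $\pi_{*}\omega_{D/X}$ a shift by $d$ of a locally free sheaf. It then remains to identify $\mathcal{F}_{0}=\pi_{*}\mathcal{O}_{D}$ with $\mathcal{O}_{Z}$ (after which the $j=c$ sheaf is forced to be a line bundle) and to produce $L$. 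Here $\mathcal{F}_{0}$ is a sheaf of $\mathcal{O}_{Z}$-algebras, so $\pi$ factors through the finite flat $Z$-scheme $\operatorname{Spec}_{Z}\mathcal{F}_{0}$, whose fibres are one-point schemes since the $D_{p}$ are connected; I would deduce that it has degree one by showing $\pi$ is generically of degree one and invoking the elementary fact that an invertible $\mathcal{O}_{Z}$-subalgebra of the sheaf of total fractions equals $\mathcal{O}_{Z}$, and Grothendieck duality for $\pi$ then gives $\pi_{*}\omega_{D/Z}\simeq\mathcal{F}_{0}^{\vee}[-d]=\mathcal{O}_{Z}[-d]$ and matches the unit $\mathcal{O}_{Z}\rightarrow\mathcal{F}_{0}$ with the dual of the trace, consistent with this. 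Once $\pi_{*}\mathcal{O}_{D}=\mathcal{O}_{Z}$ is known, condition $(2)$ and $\omega_{D/Z}|_{D_{p}}\simeq\omega_{D_{p}}$ say that $\omega_{D/X}\otimes\omega_{D/Z}^{-1}$ is a fibrewise trivial line bundle, hence equals $\pi^{*}L$ for $L=\bigl(\pi_{*}(\omega_{D/X}\otimes\omega_{D/Z}^{-1})\bigr)^{-1}\in\picr Z$, which is the last identity of $\eqref{eqn-global-version-of-conditions-in-theorem-for-regular-immersions}$, and then $\pi_{*}\omega_{D/X}\simeq L\otimes\pi_{*}\omega_{D/Z}$ is a shift of $L$. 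I expect the hard part to be exactly the step $\pi_{*}\mathcal{O}_{D}=\mathcal{O}_{Z}$, equivalently $H^{0}(D_{p},\mathcal{O}_{D_{p}})=k$ for every $p$: this is not a formal consequence of the cohomological vanishings and must exploit in an essential way that each $D_{p}$ is a \emph{connected Gorenstein} scheme, with everything else being bookkeeping with base change, relative duality and the see-saw principle.
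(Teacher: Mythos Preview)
Your overall strategy matches the paper's: pass between fibrewise and global statements via flat base change $\iota_p^* \pi_* \simeq \pi_{k*} \iota_{D_p}^*$, and recognise each $\pi_* \wedge^j \mathcal{N}$ as a perfect complex on $Z$ whose fibres are prescribed. The direction \eqref{eqn-global-version-of-conditions-in-theorem-for-regular-immersions} $\Rightarrow$ \eqref{eqn-fibrewise-version-of-conditions-in-theorem-for-regular-immersions} is handled identically.

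For the converse, however, you have misjudged where the difficulty lies. The paper obtains $H^0(D_p,\mathcal{O}_{D_p}) = k$ immediately from the fact that $D_p$ is proper and connected over the algebraically closed field $k$; Gorenstein plays no role here, and connectedness is used only at this single point. Once this is known, base change gives $\iota_p^* \pi_* \mathcal{O}_D \simeq k$ for every $p$, so $\pi_* \mathcal{O}_D$ is a line bundle $L'$; then the adjunction unit $\mathcal{O}_Z \rightarrow \pi_* \pi^* \mathcal{O}_Z = L'$ is a morphism of line bundles whose fibre at each $p$ is the nonzero unit $k \rightarrow \pi_{k*}\pi_k^* k$, hence nowhere vanishing and thus an isomorphism. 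Your detour through $\operatorname{Spec}_Z \mathcal{F}_0$ and a generic-degree-one argument is unnecessary (and its generic step is not obviously available from the hypotheses). The Gorenstein assumption is instead used, via duality on $D_p$, to compute $H^d_{D_p}(\omega_{D/X}|_{D_p}) \simeq H^d_{D_p}(\omega_{D_p}) \simeq H^0_{D_p}(\mathcal{O}_{D_p}) \simeq k$, which identifies $\pi_* \omega_{D/X}$ as a line bundle $L$ shifted by $-d$. For the final identity the paper argues in the same spirit as your see-saw step but with an explicit map: sheafified Grothendieck duality gives $L \simeq \bigl(\pi_*(\omega_{D/X}^{-1} \otimes \omega_{D/Z})\bigr)^\vee$, and then the adjunction counit $\pi^* L^\vee \rightarrow \omega_{D/X}^{-1} \otimes \omega_{D/Z}$ is again a nowhere-vanishing map of line bundles. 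So the paper's proof is structurally the same as yours but replaces both of your ``hard'' steps by the single device of exhibiting a canonical adjunction map between line bundles and checking fibrewise that it does not vanish.
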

\begin{proof}
By flat base change around the square
\begin{align*}
\xymatrix{
W_p \; \ar@{^{(}->}[r]^{\iota_{W_p}} \ar[d]_{\pi_k} & 
W \ar[d]^{\pi} \\
\spec k \; \ar@{^{(}->}[r]_{\iota_p} &
Z 
}
\end{align*}
we have a functorial isomorphism 
$\iota^*_p \pi_* \simeq \pi_{k *} \iota^*_{W_p}$. 
Since $H^i_{W_p}(\wedge^j \mathcal{N}|_{W_p})$ is 
the $i$-th cohomology of $\pi_{k *} \iota^*_{W_p} (\wedge^j\mathcal{N})$ restricting 
\eqref{eqn-global-version-of-conditions-in-theorem-for-regular-immersions}
to any closed $p \in Z$ by $\iota_p^*$ gives
\eqref{eqn-fibrewise-version-of-conditions-in-theorem-for-regular-immersions}. 

Conversely, assume that 
\eqref{eqn-fibrewise-version-of-conditions-in-theorem-for-regular-immersions}
holds for every closed $p \in Z$.
By the Grothendieck duality for $W_p$ we have
$$ H_{W_p}^d(\omega_{W/X}|_{W_p}) \simeq H_{W_p}^d(\omega_{W_p}) \simeq 
H_{W_p}^0(\mathcal{O}_{W_p}) $$
and since $W_p$ is proper and connected we have 
$H_{W_p}^0(\mathcal{O}_{W_p}) \simeq k$.
Thus by the above base change we have for every closed $p \in Z$ 
\begin{align*}
&\iota^*_p \pi_*\; \mathcal{O}_Z \simeq k \\
&\iota^*_p \pi_*\; \wedge^j \mathcal{N}|_{W_p} = 0 \quad\text{ for all } 0 < j < c\\
&\iota^*_p \pi_*\; \omega_{W/X} \simeq k[-d].
\end{align*}
Therefore $\pi_* \wedge^j \mathcal{N}$ vanishes for $0 < j < c$, while
$\pi_* \mathcal{O}_W \simeq L'$ and $\pi_* \omega_{W/X}[d] \simeq L$ for
some $L', L \in \picr Z$. But then $L' \simeq \mathcal{O}_Z$ 
since the adjunction unit 
$\mathcal{O}_Z \rightarrow  \pi_* \pi^* \mathcal{O}_Z$ gives
a nowhere vanishing morphism $\mathcal{O}_Z \rightarrow  L'$ of line
bundles. This is because the restriction of 
the adjunction unit $\mathcal{O}_Z \rightarrow \pi_* \pi^* \mathcal{O}_Z$
to any $p \in Z$ is the adjunction unit $k \rightarrow \pi_{k *}
\pi^*_k \; k$ 
which certainly doesn't vanish. 

Similarly, by the sheafified Grothendieck duality 
$$ L \simeq \pi_* \omega_{W/X}[d] 
\simeq \pi_* \rder\shhomm(\omega^{-1}_{W/X} \otimes \omega_{W/Z},
\omega_{W/Z}[d]) 
\simeq \left(\pi_*(\omega^{-1}_{W/X} \otimes
\omega_{W/Z})\right)^\vee.
$$
Therefore the adjunction co-unit 
$\pi^* \pi_* (\omega^{-1}_{W/X} \otimes \omega_{W/Z}) \rightarrow 
(\omega^{-1}_{W/X} \otimes \omega_{W/Z})$ gives a nowhere vanishing
line bundle morphism $\pi^* L^{\vee} \rightarrow \omega^{-1}_{W/X} \otimes
\omega_{W/Z}$, whence the final assertion that $\omega_{W/X} \simeq
\pi^* L \otimes \omega_{W/Z}$.
\end{proof}

\begin{proof}[Proof of Theorem
\ref{theorem-sphericity-for-regular-immersions}]

\em `If' direction: \rm
Since $\xi_p$ is the composition 
$ W_p \xrightarrow{\iota_{W_p}} W \xrightarrow{\xi} X $
we have by adjunction 
$$
\rder\homm_X(\xi_* \mathcal{O}_W, \xi_{p *} \mathcal{O}_{W_p})
\simeq
\rder\homm_{W}(\xi^* \xi_* \mathcal{O}_W, \iota_{W_p *} \mathcal{O}_{W_p}).$$ 

Consider the standard spectral sequence
$$ E^{i,j}_2 = \ext^i_{W}(\mathcal{H}^{-j}(\xi^* \xi_*
\mathcal{O}_W), \iota_{W_p *} \mathcal{O}_{W_p})
\quad \Rightarrow \quad 
E^{i+j}_\infty = \homm^{i+j}_{D(W)}(\xi^* \xi_* \mathcal{O}_W, \iota_{W_p *}
\mathcal{O}_{W_p}).$$
Since for any $j \in \mathbb{Z}$ we have 
$\mathcal{H}^{-j}(\xi^* \xi_* \mathcal{O}_W) = \wedge^j
\mathcal{N}^\vee$ it follows by adjunction that
$$ E^{i,j}_2 \simeq 
\ext^i_{W}(\wedge^j \mathcal{N}^\vee, \iota_{W_p *} \mathcal{O}_{W_p}) 
\simeq \ext^i_{W_p}(\wedge^j \mathcal{N}^\vee|_{W_p}, \mathcal{O}_{W_p}) 
\simeq H^i_{W_p}(\wedge^j \mathcal{N}|_{W_p}).$$

Since the fibers of $W$ are proper and connected
$H_{W_p}^0(\mathcal{O}_{W_p}) \simeq k$. Moreover 
$$H_{W_p}^{d}(\omega_{W/X}|_{W_p}) \simeq H^d_{W_p}(\omega_{W_p}) \simeq 
H^0_{W_p}(\mathcal{O}_{W_p}) \simeq k.$$
by the assumption 
$(\ref{eqn-restriction-of-the-normal-sheaf})$ and the Grothendieck
duality.

Thus by assumption
$(\ref{eqn-cohomological-vanishing-of-the-normal-sheaf})$ 
and by Lemma \ref{lemma-fibrewise-to-global-for-regular-immersions}
all $E^{i,j}_2$ are zero except for 
$$ E_2^{0,0} = H_{W_p}^0(\mathcal{O}_{W_p}) \simeq k \text{ and }
E_2^{d, c} = H_{W_p}^{d}(\omega_{W/X}|_{W_p}) \simeq k. $$ 
Since $d + c = \dim X - \dim Z \neq 0$ 
the convergence of the spectral sequence implies that 
$$ \rder\homm_{X}(\xi_* \mathcal{O}_W, \xi_{p *} \mathcal{O}_{W_p}) 
\simeq k \oplus k [ -(\dim X - \dim Z) ]. $$

By Prop.~\ref{prps-tfae-left-cotwist-is-an-equivalence-for-fibrations}
we have $\mathcal{L}_W \simeq L[c+d]$ for some $L \in \picr Z$. 
By Prop.~\ref{prps-alternative-description-of-L_W}
we have an exact triangle $$ \mathcal{O}_Z \rightarrow 
\pi_{*} \rder\shhomm(\xi^* \xi_* \mathcal{O}_W, \mathcal{O}_W)
\rightarrow L[-(c+d)]. $$ Since $c + d > 0$ it follows that 
the $(c+d)$-th cohomology sheaf of the complex 
$\pi_{*} \rder\shhomm(\xi^* \xi_* \mathcal{O}_W, \mathcal{O}_W)$
is isomorphic to $L$. 
On the other hand, computing this cohomology sheaf via a 
spectral sequence similar to the one above yields 
$\pi_* \omega_{W/X}[d]$. Thus $L \simeq \pi_* \omega_{W/X}[d]$
which implies by Lemma \ref{lemma-fibrewise-to-global-for-regular-immersions}
that $\omega_{W/X} \simeq \pi^* L \otimes \omega_{W/Z}$. 
By Prop.~\ref{prps-sphericity-for-Gorenstein-flat-fibrations}
we conclude that $W$ is spherical. 

\em `Only If' direction: \rm

Conversely, suppose $W$ is spherical. 
By Proposition \ref{prps-sphericity-for-Gorenstein-flat-fibrations} 
the fibres of $\pi$ are Gorenstein schemes and we have
$$ \rder\homm_{X}(\xi_* \mathcal{O}_W, \xi_{p *} \mathcal{O}_{W_p}) 
\simeq k \oplus k [ -(\dim X - \dim Z) ] $$
for each fibre $W_p$. The same spectral sequence as before shows that 
the $0$-th cohomology of the complex 
$\rder\homm^i_{X}(\xi_* \mathcal{O}_W, \xi_{p *} \mathcal{O}_{W_p})$
is isomorphic to $H^0_{W_p}(\mathcal{O}_{W_p})$. Therefore 
$H^0_{W_p}(\mathcal{O}_{W_p}) = k$ and so the fibers $W_p$ are connected. 
By Prop.~\ref{prps-sphericity-for-Gorenstein-flat-fibrations} 
we have $\omega_{W/X} \simeq \pi^* L \otimes \omega_{W/Z}$ for some 
$L \in \picr Z$.  Restricting this to every fiber gives the assertion 
$(\ref{eqn-restriction-of-the-normal-sheaf})$. 

Finally, suppose that $\xi$ is Arinkin-Caldararu. Then $\xi^* \xi_*
\mathcal{O}_{W} \simeq \bigoplus_i \wedge^i \mathcal{N}^\vee[-i]$, so
\begin{align*}
\rder\homm_{X}(\xi_* \mathcal{O}_{W}, \xi_{p *} \mathcal{O}_{W_p}) \simeq 
\rder\homm_{W}(\xi^* \xi_* \mathcal{O}_{W}, 
\iota_{W_p *} \mathcal{O}_{W_p}) \simeq \\
\simeq \bigoplus_i \rder\homm_{W}(\wedge^i \mathcal{N}^\vee, 
\iota_{W_p *} \mathcal{O}_{W_p})[i] \simeq 
\bigoplus_i \rder\homm_{W_p}(\mathcal{O}_{W_p}, 
\wedge^i \mathcal{N})[i]
\end{align*} 
and we see that the assertion 
$(\ref{eqn-cohomological-vanishing-of-the-normal-sheaf})$ 
is equivalent to 
$$ \rder\homm_{X}(\xi_* \mathcal{O}_{W}, \xi_{p *} \mathcal{O}_{W_p})
\simeq k \oplus k[-(\dim X - \dim Z)]. $$
\end{proof}

\appendix
\section{An example}

It is well-known
that the derived category $D(T^* Fl_n)$
where $Fl_n$ is the full flag variety for  some Lie algebra
${\mathfrak g}$ carries an action of the affine
braid group
\cite{KhovanovThomas-BraidCobordismsTriangulatedCategoriesAndFlagVarieties},
\cite{Bezrukavnikov-NonCommutativeCounterpartsOfTheSpringerResolution}.
It is shown in \cite{KhovanovThomas-BraidCobordismsTriangulatedCategoriesAndFlagVarieties}
that the action of the usual braid group $Br_n$ is by spherical twists
$T_i$, $i=1,\ldots, n-1$ in spherical functors
$S_i\colon D(T^*\mathcal{P}_i) \to D(T^*Fl_n)$, where
$\mathcal{P}_i$ are the partial flag varieties with the space of
dimension $i$ missing from the flag.
The functor $S_i$ is obtained as the composition
$\iota_*\pi^*$, where $\iota\colon D_i \hookrightarrow  T^*Fl_n$
is the embedding of
the divisor $D_i=Fl_n\times_{\mathcal{P}_i} T^*\mathcal{P}_i$,
and $\pi\colon D_i\to T^* \mathcal{P}_i$ is a $\mathbb{P}^1$-bundle.
The Fourier--Mukai kernel of $S_i$ is an example of a spherical fibration, being
the structure sheaf of $D_i\subset T^*Fl_n\times T^*\mathcal{P}_i$ where
$D_i$ embeds into $T^*Fl_n$ and is fibered over $T^*\mathcal{P}_i$.

Recall that the usual braid group is generated by $n-1$ ``crossings''
$t_1, \ldots, t_{n-1}$,
with the relations $t_it_{i+1}t_i=t_{i+1}t_it_{i+1}$.
The affine braid group is generated by the same $t_1,\ldots, t_{n-1}$, plus
a ``rotation'' generator $r$ (if the affine braid group is viewed as the group of
braids in an annulus, this generator shifts strands, say, counterclockwise).
The relations then are $rt_ir^{-1}=t_{i+1}$ and $r^2t_nr^{-2}=t_1$.
One can add one more "crossing" $r^{-1}t_1r=t_0=t_n=rt_{n-1}r^{-1}$,
keeping the relations
$t_it_{i+1}t_i=t_{i+1}t_it_{i+1}$.
In the above affine braid group action on $D(T^* Fl_n)$ the action of
the functor corresponding to $t_n$
is not known to have an interpretation as a spherical twist. This can be mended
in a specific case, and the relative spherical object that induces
the twist will not be a structure sheaf of a subscheme.
For the details and proofs please see
\cite{Anno-AffineTanglesAndIrreducibleExoticSheaves}.

Let $\mathfrak{g}$ be ${\mathfrak{s}\mathfrak{l}}_n(\mathbb{C})$.
Consider the Grothendieck-Springer resolution $\tilde{\pi}:\tilde{\mathfrak{g}}\to \mathfrak{g}$.
It provides a resolution of singularities $\pi: T^*Fl_n \to
\mathcal{N}$ of the nilpotent cone $\mathcal{N} \subset \mathfrak{g}$.
Let $z_{2n}$ be a nilpotent element of
${\mathfrak{s}\mathfrak{l}}_{2n}(\mathbb{C})$, with
two Jordan blocks of rank $n$, let
$\mathcal{S}_{2n} \subset\mathfrak{g}$ be a transversal slice to the orbit of
${z_{2n}}$ under the adjoint action of $\gsl_{2n}(\mathbb{C})$, and
let $\mathcal{U}_{2n}\subset\tilde{\mathfrak{g}}$ be the
preimage of $\mathcal{S}_{2n}$ under the resolution $\pi$. 
By \cite[Remark 2.2]{Bezrukavnikov-NonCommutativeCounterpartsOfTheSpringerResolution}
the action of the affine braid group on $D(\tilde{\mathfrak{g}})$ restricts to 
an action of the same group on $D(\mathcal{U}_{2n})$; one can construct this action 
explicitly in a manner similar to \cite{KhovanovThomas-BraidCobordismsTriangulatedCategoriesAndFlagVarieties}.
The variety $\mathcal{U}_{2n}$ is smooth symplectic of complex dimension
$2n$.  The preimage $\mathcal{X}_{2n}$ of ${z_{2n}}$ is a
projective variety of dimension $n$. It is a union
of smooth components intersecting normally.
For simplicity, denote the derived category
$D_{\mathcal{X}_{2n}}(\mathcal{U}_{2n})$ by $\mathcal{D}_{2n}$.

The non-affine braid group action on $\mathcal{D}_{2n}$ is generated
by twists in functors $S_i$, $1\leq i \leq 2n-1$ defined by certain
spherical fibrations, cf. \cite{AnnoNandakumar-ExoticTStructuresFor2BlockSpringerFibres} for explicit formulas; it is the
special property of the nilpotent element $z_{2n}$ that the sources of
these functors $S_i$ are all equivalent to $\mathcal{D}_{2n-2}$.
Apart from these functors, there is an autoequivalence
$R:\mathcal{D}_{2n}\to\mathcal{D}_{2n}$, cf. \cite[\S
4.1]{Anno-AffineTanglesAndIrreducibleExoticSheaves} that corresponds
to the affine generator $r$ described above.  The remaining twist $T_{2n}$
can be obtained by conjugating $T_1$ or $T_{2n-1}$ by $R$.

It is proven in \cite{Anno-AffineTanglesAndIrreducibleExoticSheaves}
that the generator $T_n$ is indeed a twist in some
functor $S_{2n}: \mathcal{D}_{2n-2}\to \mathcal{D}_{2n}$.
In fact, $S_{2n}$ is isomorphic to $RS_1$ or $R^{-1}S_{2n-1}$.
The remarkable thing about $S_{2n}$ is that being a composition of
$S_1$ or $S_{2n-1}$ and an autoequivalence of
$\mathcal{D}_{2n}$, it retains many properties of $S_i$'s. In particular,
its kernel $\mathcal{K} \in D(\mathcal{U}_{2n-2}\times
\mathcal{U}_{2n})$
is orthogonally spherical over $\mathcal{U}_{2n-2}$.
At the same time $\mathcal{K}$ is a genuine object of
the derived category $D(\mathcal{U}_{2n-2}\times \mathcal{U}_{2n})$,
that is, not isomorphic to the direct sum of its cohomology sheaves.
It may be seen in the computation carried out in
\cite{Anno-AffineTanglesAndIrreducibleExoticSheaves}, section 7.2, for $n=2$;
in this case $\mathcal{U}_{2n-2}=\mathcal{U}_2 \simeq T^*\mathbb{P}^1$,
and while the image of $\mathcal{O}_{\mathbb{P}^1}$
is a sheaf on $\mathcal{U}_4$, the image
of $\mathcal{O}_{\mathbb{P}^1}(-1)$ is not.
If $\mathcal{K}$ was actually a spherical fibration, that is,
a structure sheaf of some $D \subset \mathcal{U}_4$ fibered
over $\mathcal{U}_2$, this would not be possible.

\bibliography{references}
\bibliographystyle{amsalpha}

\end{document}